\documentclass[12pt]{amsart}
\usepackage{amssymb,amsfonts,latexsym,amscd}
\usepackage{extpfeil}
\usepackage{mathrsfs}

\usepackage[all,cmtip,2cell]{xy}
\UseAllTwocells
\usepackage{verbatim}
\usepackage{hyperref}

\newtheorem{theorem}{Theorem}[section]
\newtheorem{lemma}[theorem]{Lemma}
\newtheorem{proposition}[theorem]{Proposition}
\newtheorem{corollary}[theorem]{Corollary}
\theoremstyle{definition}

\newtheorem{example}[theorem]{Example}
\newtheorem{question}[theorem]{Question}

\newtheorem{remark}[theorem]{Remark}

\newcommand{\id}{\text{id}}

\newcommand{\Hom}{\text{Hom}}

\newcommand{\Rep}{\text{Rep}}

\newcommand{\Corep}{\text{Corep}}
\newcommand{\Vect}{\text{Vec}}

\newcommand{\g}{\mathfrak{g}}
\newcommand{\h}{\mathfrak{h}}

\newcommand{\ot}{\otimes}

\newcommand{\ben}{\begin{enumerate}}
\newcommand{\een}{\end{enumerate}}

\newcommand{\Lie}{{\text{Lie}}}

\hyphenation{se-mi-simple co-se-mi-simple}

\numberwithin{equation}{section}

\begin{document}

\title[Twisting of affine algebraic groups, II] {Twisting of affine algebraic groups, II}

\author{Shlomo Gelaki}
\address{Department of Mathematics, Iowa State University, Ames, IA 50011, USA} \email{gelaki@iastate.edu}

\date{\today}

\keywords{connected nilpotent and unipotent algebraic groups; Hopf $2$-cocycles; cotriangular Hopf algebras; Ore extensions; Weyl algebras}

\begin{abstract}
We use \cite{G} to study the algebra structure of twisted cotriangular Hopf algebras ${}_J\mathcal{O}(G)_{J}$, where $J$ is a Hopf $2$-cocycle for a connected nilpotent algebraic group $G$ over $\mathbb{C}$. In particular, we show that ${}_J\mathcal{O}(G)_{J}$ is an affine Noetherian domain with Gelfand-Kirillov dimension $\dim(G)$, and that if $G$ is unipotent and $J$ is supported on $G$, then ${}_J\mathcal{O}(G)_{J}\cong U(\g)$ as algebras, where $\g={\rm Lie}(G)$. We  also determine the finite dimensional irreducible representations of ${}_J\mathcal{O}(G)_{J}$, by analyzing twisted function algebras on $(H,H)$-double cosets of the support $H\subset G$ of $J$. Finally, we work out several examples to illustrate our results.
\end{abstract}

\maketitle

\tableofcontents

\section{Introduction}

Let $G$ be an affine algebraic group over $\mathbb{C}$, and let $\mathcal{O}(G)$ be the coordinate algebra of $G$. Then $\mathcal{O}(G)$ is a finitely generated commutative Hopf algebra over $\mathbb{C}$. Recall that Drinfeld's twisting procedure produces (new) cotriangular Hopf algebra structures on the underlying coalgebra of $\mathcal{O}(G)$. Namely, if $J\in (\mathcal{O}(G)^{\ot 2})^*$ is a Hopf $2$-cocycle for $G$, then there is a cotriangular Hopf algebra ${}_J\mathcal{O}(G)_{J}$ which is obtained from $\mathcal{O}(G)$ after twisting its ordinary multiplication by means of $J$ and replacing its $R$-form $1\ot 1$ with $J_{21}^{-1}J$.

In categorical terms, Hopf $2$-cocycles for $G$ correspond to tensor structures on the forgetful functor ${\rm Rep}(G)\to \Vect$ of the tensor category $\Rep(G)$ of finite dimensional rational representations of $G$ (see, e.g, \cite{EGNO}).

If ${}_J\mathcal{O}(G)_{J}=\mathcal{O}(G)$ as Hopf algebras then $J$ is called invariant. Invariant Hopf $2$-cocycles for $G$ form a group, which was described completely for connected $G$ \cite[Theorem 7.8]{EG4}. However, if $J$ is not invariant then the situation becomes much more interesting, since the cotriangular Hopf algebra ${}_J\mathcal{O}(G)_{J}$ will be noncommutative. It is thus natural to study the algebra structure and representation theory of ${}_J\mathcal{O}(G)_{J}$ in cases where the classification of Hopf $2$-cocycles for $G$ is known. For example, for finite groups $G$, this was done in \cite[Theorem 3.2]{EG3} and \cite[Theorem 3.18]{AEGN}.
 
The classification of Hopf $2$-cocycles for connected nilpotent algebraic groups $G$ over $\mathbb{C}$ is also known \cite{G}. For example, Hopf $2$-cocycles in the unipotent case are classified by pairs $(H,\omega)$, where $H$ is a closed subgroup of $G$, called the {\em support} of $J$, and $\omega\in \wedge^2{\rm Lie}(H)^*$ is a non-degenerate $2$-cocycle (equivalently, pairs $(\h,r)$, where $\h$ is a quasi-Frobenius Lie subalgebra of ${\rm Lie}(G)$ and $r\in \wedge^2\h$ is a non-degenerate solution to the CYBE (see \ref{qfla})). This was done in \cite[Theorem 3.2]{EG2}, using Etingof--Kazhdan quantization theory \cite{EK1,EK2,EK3}. Later, we extended Movshev's theory on twisting of finite groups \cite{MOV} to the algebraic group case \cite[Section 3]{G}, and generalized the aforementioned classification to connected nilpotent algebraic groups, without using Etingof--Kazhdan quantization theory \cite[Corollary 5.2, Theorem 5.3]{G}.\footnote{We stress however, 
that both the classification of Hopf $2$-cocycles and Movshev's theory for arbitrary affine algebraic groups over $\mathbb{C}$ are still missing (see, e.g., \cite{G} and references therein).}

Thus our goal in this paper is to study the algebra structure and representation theory of the cotriangular Hopf algebras ${}_J\mathcal{O}(G)_{J}$  for connected nilpotent $G$.

The organization of the paper is as follows. In Section \ref{prelim} we recall some basic notions and results used in the sequel. 

In Section \ref{algstrhopf} we consider the cotriangular Hopf algebras ${}_J\mathcal{O}(G)_{J}$ for {\em unipotent} $G$. We first show that ${}_J\mathcal{O}(G)_{J}$ is an iterated Ore extension of $\mathbb{C}$, thus is an affine Noetherian domain with Gelfand-Kirillov dimension $\dim(G)$ (see Corollary \ref{noethdomunipgr}). Secondly, in Theorem \ref{noethdomminunip} we prove that if $J$ is minimal (i.e., $J$ is supported on $G$) then ${}_J\mathcal{O}(G)_{J}\cong U(\g)$ as algebras, where $\g:={\rm Lie}(G)$, while in general, ${}_J\mathcal{O}(G)_{J}$ is a crossed product algebra of ${}_J\mathcal{O}(H)_{J}\cong U(\h)$ and the algebra $_J{}\mathcal{O}(G/H)$, where $H\subset G$ is the support of $J$ and $\h:={\rm Lie}(H)$ (see Theorem \ref{noethdomunipgen}). 

In Section \ref{algstrquot} we analyze twisted function algebras on $(H,H)$-double cosets in {\em unipotent} $G$, and use \cite{G}, to study the quotient algebra ${}_J\mathcal{O}(Z)_{J}$ of ${}_J\mathcal{O}(G)_{J}$ for a double coset $Z$ in $H\backslash G\slash H$. In Theorems \ref{algisomlemmanormal} and \ref{noethdomunip} we show that ${}_J\mathcal{O}(Z)_{J}$ does not contain a Weyl subalgebra if and only if ${}_J\mathcal{O}(Z)_{J}\cong U(\h)$ as algebras, if and only if, $H$ and $J$ are $g$-invariants for $g\in Z$. 

In Section \ref{reprs} we determine the finite dimensional irreducible representations of ${}_J\mathcal{O}(G)_{J}$ (see Theorem \ref{repthm}). Namely, in Theorem \ref{thm1} we show that every irreducible representation of ${}_J\mathcal{O}(G)_{J}$ factors through a unique quotient algebra ${}_J\mathcal{O}(Z)_{J}$, and then deduce from Theorems \ref{algisomlemmanormal} and \ref{noethdomunip} that ${}_J\mathcal{O}(Z)_{J}$ has a finite dimensional irreducible representation if and only if ${}_J\mathcal{O}(Z)_{J}\cong U(\h)$ as algebras, if and only if, $H$ and $J$ are $g$-invariants for $g\in Z$. 

In Section \ref{examples} we give several examples that illustrate the results from Sections \ref{algstrhopf}--\ref{reprs} (see Examples \ref{heisen}--\ref{heisen5}). 

Finally, in Section \ref{nilphopf} we use the results from Sections \ref{algstrhopf}--\ref{reprs} to describe the algebra structure and representations of the cotriangular Hopf algebras ${}_J\mathcal{O}(G)_{J}$ for {\em connected nilpotent} $G$ (see Theorem \ref{noethdomnilp}).  
\medskip

{\bf Acknowledgments.} 
I am grateful to Pavel Etingof for stimulating and helpful discussions. This material is based upon work supported by the National Science Foundation under Grant No. DMS-1440140, while the author was in residence at the Mathematical Sciences Research Institute in Berkeley, California, during the Spring 2020 semester.

\section{Preliminaries}\label{prelim}

\subsection{Hopf $2$-cocycles}\label{hopf2coc} Let $A$ be a Hopf algebra over $\mathbb{C}$. A linear form 
$J:A\ot A\to \mathbb{C}$ is called a Hopf $2$-cocycle for $A$ if it has an inverse $J^{-1}$ under the convolution product $*$ in $\Hom _{\mathbb{C}}(A\ot A,\mathbb{C})$, and satisfies
\begin{align*}\label{2coc}
\sum J (a_1b_1,c)J (a_2,b_2)&=\sum J
(a,b_1c_1)J (b_2,c_2),\\ J (a,1) =\varepsilon(a)&=J (1,a)\nonumber
\end{align*}
for all $a,b,c\in A$.

Given two Hopf $2$-cocycles $K,J$ for $A$, one constructs a new algebra ${}_{K} A_{J}$ as follows. As vector spaces, ${}_{K} A_{J}=A$, and the new multiplication ${}_{K} m_{J}$ is given by
\begin{equation*}\label{nm2}
{}_{K} m_{J}(a\ot b)=\sum K^{-1} (a_1,b_1)a_2b_2J
(a_3,b_3),\,\,\,a,b\in A.
\end{equation*}
 
In particular, ${}_J A_{J}$ is a Hopf algebra,\footnote{${}_J A_{J}$ is denoted also by $A^{J}$, e.g., in \cite{G}.} where ${}_J A_{J}=A$ as coalgebras and the new multiplication ${}_J m_{J}$ is given by
\begin{equation}\label{nm}
{}_J m_{J}(a\ot b)=\sum J^{-1} (a_1,b_1)a_2b_2J
(a_3,b_3),\,\,\,a,b\in A.
\end{equation} 
Equivalently, $J$ defines a tensor structure on the forgetful functor \linebreak $\Corep(A)\to \Vect$.

We also have two new unital associative algebras $A_J:={}_1 A_{J}$ and $_{K} A:={}_{K} A_{1}$, with multiplication rules given respectively by 
\begin{equation}\label{multj}
m_J(a\ot b)=\sum a_1b_1J(a_2,b_2),\,\,\,a,b\in A,
\end{equation}
and
\begin{equation}\label{multjinv}
{}_{K} m(a\ot b)=\sum {K}^{-1}(a_1,b_1)a_2b_2,\,\,\,a,b\in A.
\end{equation}
(For more details, see, e.g, \cite{EGNO}.)

\begin{remark}
The algebras $A_{J}$, ${}_{K} A$ and ${}_{K} A_{J}$ are called $(A,{}_JA_{J})$-biGalois, $({}_KA_{K},A)$-biGalois and $({}_KA_{K},{}_JA_{J})$-biGalois algebras, respectively.
\end{remark} 

\begin{lemma}\label{deltaalgmap}
The comultiplication map $\Delta$ of $A$ determines an injective algebra homomorphism $\Delta:{}_{K} A_{J}\xrightarrow{{\rm 1:1}}
{}_{K} A\ot A_J$. 
\end{lemma}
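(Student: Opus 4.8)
The plan is to verify that $\Delta:A\to A\otimes A$ becomes an algebra map when the source carries the ${}_KA_J$ multiplication and the target carries the ${}_KA\otimes A_J$ multiplication, and then observe that injectivity is automatic. First I would write out what must be checked: for $a,b\in A$ we need
\[
\Delta\bigl({}_Km_J(a\otimes b)\bigr)=\bigl({}_Km\otimes m_J\bigr)\bigl(\Delta(a)\otimes\Delta(b)\bigr),
\]
where on the right the two tensor factors of $\Delta(a)$ and $\Delta(b)$ are to be recombined by the usual tensor-product-algebra rule, i.e. the first legs multiply via ${}_Km$ and the second legs via $m_J$. Expanding the left-hand side using \eqref{nm} followed by coassociativity, and the right-hand side using \eqref{multjinv} and \eqref{multj} followed by coassociativity, both sides unfold to the same six-fold (really $n$-fold, absorbed by Sweedler notation) expression
\[
\sum K^{-1}(a_1,b_1)\,a_2b_2\otimes a_3b_3\,J(a_4,b_4);
\]
the only nontrivial input is that $\Delta$ is an algebra homomorphism $A\otimes A\to A\otimes A$ for the original multiplication, i.e. $\Delta(xy)=\Delta(x)\Delta(y)$, together with coassociativity to line up the Sweedler indices. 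So the computation is essentially bookkeeping.

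The one genuinely substantive point — and the step I expect to be the main obstacle, modest as it is — is checking that $\Delta$ really lands in ${}_KA\otimes A_J$ as a \emph{unital} algebra map and, more importantly, that ${}_KA$ and $A_J$ are genuinely associative unital algebras so that their tensor product makes sense; but these are exactly the algebras introduced in \eqref{multjinv} and \eqref{multj}, whose associativity follows from the Hopf $2$-cocycle condition on $K$ (resp. $J$) together with coassociativity of $\Delta$ and the counit axioms, and whose unit is $1_A$ because $K^{-1}(1,a)=\varepsilon(a)=K^{-1}(a,1)$ and likewise for $J$. I would either cite this as standard (it is recorded in \cite{EGNO}) or include the one-line verification. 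Granting it, the displayed identity above shows $\Delta$ is an algebra homomorphism ${}_KA_J\to {}_KA\otimes A_J$.

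Finally, injectivity: $\Delta$ is injective on $A$ as a coalgebra map, since $(\varepsilon\otimes\id)\circ\Delta=\id_A$, and this has nothing to do with which multiplication we put on either side — it is a statement about the underlying vector spaces and the map $\Delta$, which is unchanged by twisting. Hence $\Delta:{}_KA_J\hookrightarrow{}_KA\otimes A_J$ is an injective algebra homomorphism, as claimed. I would close by remarking that this realizes ${}_KA_J$ as a subalgebra of a tensor product of the two "one-sided twisted" algebras, which is the form in which the lemma will be used later (e.g. to transport Noetherianity or domain properties).
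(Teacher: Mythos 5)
Your proof is correct and follows essentially the same route as the paper: expand both $\Delta({}_Km_J(a\ot b))$ and the product of $\Delta(a)$, $\Delta(b)$ in ${}_KA\ot A_J$ to the common expression $\sum K^{-1}(a_1,b_1)\,a_2b_2\ot a_3b_3\,J(a_4,b_4)$, using coassociativity and the fact that $\Delta$ is multiplicative for the original product. Your explicit remarks on unitality/associativity of the one-sided twists and on injectivity via $(\varepsilon\ot\id)\circ\Delta=\id$ are points the paper leaves implicit, but they add nothing beyond standard facts.
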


\begin{proof}
For every $a,b\in A$, we have 
\begin{eqnarray*}
\lefteqn{\Delta(a)\Delta(b)
= \sum {}_{K} m(a_1\ot b_1)\ot m_J(a_2\ot b_2)}\\
& = &\sum {K}^{-1}(a_1,b_1)a_2b_2\ot  a_3b_3J(a_4,b_4)\\
& = & \Delta\left(\sum {K}^{-1} (a_1,b_1)a_2b_2J
(a_3b_3)\right)=\Delta({}_{K} m_{J}(a\ot b)),
\end{eqnarray*}
as claimed.
\end{proof}

\subsection{Cotriangular Hopf algebras}\label{cothopalg} Recall that $(A,R)$ is cotriangular if $R:A\ot A\to \mathbb{C}$ is an invertible linear map under $*$, such that $R^{-1}=R_{21}$, and for every $a,b,c\in A$, we have 
$$R(a,bc)=\sum R(a_1,b)R(a_2,c),\,\,\,R(ab,c)=\sum R(b,c_1)R(a,c_2),$$ 
and
$$\sum R(a_1,b_1)b_2a_2=\sum a_1b_1R(a_2,b_2).$$

Recall that if $R$ is non-degenerate, $(A,R)$ is called minimal, and in this case $R$ defines two injective Hopf algebra maps $A\xrightarrow{1-1} A^*_{\rm fin}$ from $A$ into its finite dual Hopf algebra $A^*_{\rm fin}$. Recall also that any cotriangular Hopf algebra $(A,R)$ has a unique minimal cotriangular Hopf algebra quotient \cite[Proposition 2.1]{G}.

Given a Hopf $2$-cocycle $J$ for $A$, $({}_JA_{J},R^J)$ is also cotriangular, where $R^{J}:=J_{21}^{-1}*R*J$.
(For more details, see, e.g, \cite{EGNO}.)

\begin{lemma}\label{prim1}
Assume $A$ is commutative, and let $J$ be a Hopf $2$-cocycle for $A$. 
If $p\in A$ is primitive (i.e., $\Delta(p)=p\ot 1+1\ot p$) then for every $a\in A$, we have
$$R^J(p,a)=(J-J_{21})(p,a)=(J_{21}^{-1}-J^{-1})(p,a).$$ 
\end{lemma}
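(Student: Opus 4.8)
The plan is to compute $R^J = J_{21}^{-1} * R * J$ on the pair $(p,a)$ directly from the definition of the convolution product, using that $A$ is commutative (so $R$ is the trivial $R$-form, i.e. $R = \eps \ot \eps$ as a linear form, since a commutative Hopf algebra with its standard cotriangular structure has $R(x,y) = \eps(x)\eps(y)$) and that $p$ is primitive. First I would expand
\[
R^J(p,a) = \sum J_{21}^{-1}(p_1,a_1)\, R(p_2,a_2)\, J(p_3,a_3).
\]
Using $\Delta(p) = p\ot 1 + 1\ot p$ and $\Delta^{(2)}(p) = p\ot 1\ot 1 + 1\ot p\ot 1 + 1\ot 1\ot p$, together with the normalization $R(1,x)=R(x,1)=\eps(x)$ and $J(1,x)=J(x,1)=\eps(x)$ (and the same for the inverses), the threefold sum collapses: the term where $p$ lands in the first tensor slot contributes $J_{21}^{-1}(p,a_1)\eps(a_2)\eps(a_3) = J_{21}^{-1}(p,a)$ after $R$ and $J$ become counits; the term where $p$ lands in the middle slot contributes $\eps(a_1) R(p,a_2)\eps(a_3)$, which vanishes because $R(p,a_2) = \eps(p)\eps(a_2) = 0$ as $p$ is primitive hence $\eps(p)=0$; and the term where $p$ lands in the last slot contributes $\eps(a_1)\eps(a_2) J(p,a_3) = J(p,a)$. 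Hence $R^J(p,a) = J(p,a) - J^{-1}_{21}(p,a)$ — wait, I must be careful with signs coming from $J_{21}^{-1}$ versus $J^{-1}$.

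The point I would clarify is the relation between $J^{-1}_{21}$ and $J_{21}^{-1}$ and how $J^{-1}(p,\cdot)$ relates to $-J(p,\cdot)$ on primitives. From $J * J^{-1} = \eps\ot\eps$, evaluating at $(p,a)$ and again expanding $\Delta(p)$ and $\Delta(a)$ via primitivity of $p$ gives $J(p,a) + J^{-1}(p,a) = 0$ (the cross terms involving $\eps(p)=0$ drop, leaving $J(p,a)\eps(a') + \eps(a')J^{-1}(p,a)$-type contributions that sum to zero). Thus $J^{-1}(p,a) = -J(p,a)$ for primitive $p$, and likewise $J_{21}^{-1}(p,a) = -J_{21}(p,a) = -J(a,p)$. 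Feeding this back, $R^J(p,a) = J(p,a) + J_{21}(p,a) \cdot(\text{sign})$; keeping the bookkeeping straight yields exactly $R^J(p,a) = (J - J_{21})(p,a)$, and the second equality $(J-J_{21})(p,a) = (J_{21}^{-1} - J^{-1})(p,a)$ is then immediate from $J^{-1}(p,\cdot) = -J(p,\cdot)$ and $J_{21}^{-1}(p,\cdot) = -J_{21}(p,\cdot)$.

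The only genuine subtlety — and the step I expect to require the most care — is the repeated use of primitivity to kill cross terms and to identify $J^{-1}$ with $-J$ on $p$: one must apply $\Delta$ to $p$ \emph{before} taking counits in the other legs, and track which of the two or three legs of $\Delta^{(k)}(p)$ carries $p$ versus $1$. Everything else is the routine unwinding of the convolution products $J_{21}^{-1} * R * J$ with $R = \eps\ot\eps$. I would present the computation as a short chain of equalities, first establishing $J^{-1}(p,a) = -J(p,a)$ (and its $21$-flipped version) as a preliminary observation, then substituting into the expansion of $R^J(p,a)$.
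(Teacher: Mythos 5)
Your proposal is correct and follows essentially the same route as the paper: first derive $J^{-1}(p,\cdot)=-J(p,\cdot)$ (and its flipped version) from $J*J^{-1}=\eps\ot\eps$ together with primitivity of $p$, then expand $R^J(p,a)=(J_{21}^{-1}*J)(p,a)=J_{21}^{-1}(p,a)+J(p,a)$ using that $R=\eps\ot\eps$ for commutative $A$, and substitute. The sign bookkeeping you flag does work out exactly as you suspect, so there is no gap.
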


\begin{proof}
Since $p(1)=0$, we have
\begin{eqnarray*}
\lefteqn{0=J*J^{-1}(p,a)=\sum J(p_1,a_1)J^{-1}(p_2,a_2)}\\
& = & \sum J(p,a_1)J^{-1}(1,a_2)+\sum J(1,a_1)J^{-1}(p,a_2)\\
& = & (J+J^{-1})(p,a).
\end{eqnarray*}
Thus, we have
\begin{eqnarray*}
\lefteqn{R^J(p,a)=\sum J_{21}^{-1}(p_1,a_1)J(p_2,a_2)}\\
& = & \sum J_{21}^{-1}(p,a_1)J(1,a_2)+\sum J_{21}^{-1}(1,a_1)J(p,a_2)\\
& = & (J+J_{21}^{-1})(p,a)=(J-J_{21})(p,a),
\end{eqnarray*}
as claimed.
\end{proof}

\subsection{Quasi-Frobenius Lie algebras}\label{qfla} Recall that a quasi-Frobenius Lie algebra is a Lie algebra $\h$ equipped with 
a non-degenerate skew-symmetric bilinear form $\omega:\h\times \h\to \mathbb{C}$ satisfying
\begin{equation*}\label{2cocy}
\omega([x,y],z)+\omega([z,x],y)+\omega([y,z],x)=0,\,\,\,x,y,z\in \h
\end{equation*}
(i.e., $\omega$ is a symplectic $2$-cocycle on $\h$). 

Let $\g$ be a Lie algebra. Recall that an element $r\in \wedge^2 \g$ is a solution of the classical Yang-Baxter equation (CYBE) if
$$[r_{12},r_{13}]+[r_{12},r_{23}]+[r_{13},r_{23}]=0.$$
By Drinfeld \cite{D}, solutions $r$ of the CYBE in $\wedge^2\g$
are classified by pairs $(\h,\omega)$, via $r=\omega^{-1}\in \wedge^2\h$, where $\h\subset \g$ is a quasi-Frobenius Lie
subalgebra with symplectic $2$-cocycle $\omega$.

\subsection{Ore extensions}\label{ore} Let $A$ be an algebra, and let  $\delta:A\to A$ be an algebra derivation of $A$. Recall that  the Ore extension $A[y;\delta]$ of $A$ is the algebra generated over $A$ by $y$, subject to the relations $ya-ay=\delta(a)$ for every $a\in A$. (See, e.g., \cite{MR}.)

\subsection{Unipotent algebraic groups}\label{UAG} Let $G$ be a unipotent algebraic group of dimension $m$ over $\mathbb{C}$. Recall that $A:=\mathcal{O}(G)$ is a finitely generated commutative irreducible pointed Hopf algebra, which is isomorphic to a polynomial algebra in $m$ variables as an algebra.
 
Assume we have a central extension
\begin{equation*}\label{exactsequence1}
1\to C\xrightarrow{\iota}G\xrightarrow{\pi}\overline G\to 1,
\end{equation*} 
where $C\cong\mathbb{G}_a$ (= additive group). Then we can view $\mathcal{O}(\overline{G})$ as a Hopf subalgebra of $\mathcal{O}(G)$ via $\pi^*$. Let $\mathcal{O}(C)=\mathbb{C}[z]$, $z$ is primitive. Choose $W$ in $\mathcal{O}(G)$ that maps to $z$ under the surjective Hopf algebra map $\iota^*:\mathcal{O}(G)\twoheadrightarrow \mathcal{O}(C)$, with minimal possible degree with respect to the coradical filtration. Set
$$q(W):=\Delta(W)-W\ot 1 - 1\ot W.$$

\begin{lemma}\label{orehopf1}
$q(W)$ is a coalgebra $2$-cocycle in $\mathcal{O}(\overline G)^+\ot \mathcal{O}(\overline G)^+$. 
\end{lemma}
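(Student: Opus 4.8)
We verify the cocycle identity and the containment $q(W)\in\mathcal{O}(\overline G)^{+}\otimes\mathcal{O}(\overline G)^{+}$ separately. The cocycle identity is essentially formal; the substance of the lemma is that $q(W)$ lands in $\mathcal{O}(\overline G)\otimes\mathcal{O}(\overline G)$ at all, and this is where the minimality in the choice of $W$ is used.

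For the cocycle identity, apply $(\Delta\otimes\id)\Delta=(\id\otimes\Delta)\Delta$ to $W$, substitute $\Delta(W)=W\otimes1+1\otimes W+q(W)$ on both sides, and cancel the edge terms $W\otimes1\otimes1$, $1\otimes W\otimes1$, $1\otimes1\otimes W$; what remains is exactly
$$
(\Delta\otimes\id)(q(W))+1\otimes q(W)=(\id\otimes\Delta)(q(W))+q(W)\otimes1,
$$
the coalgebra $2$-cocycle identity (equivalently, $q(W)=d(W)$ for the differential $d\colon a\mapsto\Delta(a)-a\otimes1-1\otimes a$, and the next differential annihilates it because $d\circ d=0$). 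This holds a priori in $\mathcal{O}(G)^{\otimes3}$; once $q(W)\in\mathcal{O}(\overline G)^{\otimes2}$ it is an identity in $\mathcal{O}(\overline G)^{\otimes3}$, and since $\mathcal{O}(\overline G)\subset\mathcal{O}(G)$ is a Hopf subalgebra $\Delta$ restricts to its comultiplication, so it is the cocycle condition for $\mathcal{O}(\overline G)$.

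Turning to the containment: the vanishing on the augmentation ideal is immediate, since $\varepsilon(W)=\varepsilon(\iota^{*}(W))=\varepsilon(z)=0$ ($z$ being primitive) gives $(\varepsilon\otimes\id)q(W)=W-\varepsilon(W)1-W=0$ and likewise $(\id\otimes\varepsilon)q(W)=0$; as $\mathcal{O}(\overline G)=\mathbb{C}1\oplus\mathcal{O}(\overline G)^{+}$, an element of $\mathcal{O}(\overline G)^{\otimes2}$ killed by $\varepsilon$ in either leg lies in $\mathcal{O}(\overline G)^{+}\otimes\mathcal{O}(\overline G)^{+}$. So it remains to prove $q(W)\in\mathcal{O}(\overline G)\otimes\mathcal{O}(\overline G)$. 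I would use that $\mathcal{O}(\overline G)=\mathcal{O}(G)^{\mathrm{co}\,\mathcal{O}(C)}$ for the right coaction $\rho:=(\id\otimes\iota^{*})\Delta$ (and, $C$ being central, equivalently for the left coaction, which agrees with $\rho$ up to the flip), and, using the cocycle identity of the previous step to commute $\Delta$ past $\iota^{*}$, reduce the triviality of $\rho$ on each leg of $q(W)$ to the single statement $\rho(W)=W\otimes1+1\otimes z$, i.e.\ $(\id\otimes\iota^{*})q(W)=0$. Applying $\iota^{*}\otimes\iota^{*}$ to $\Delta(W)$ shows $(\iota^{*}\otimes\iota^{*})q(W)=\Delta_{\mathcal{O}(C)}(z)-z\otimes1-1\otimes z=0$, so that, writing $\rho(W)=W\otimes1+1\otimes z+\sum_{j\ge1}u_{j}\otimes z^{j}$, each $u_{j}$ lies in $\ker(\iota^{*})=\mathcal{O}(\overline G)^{+}\mathcal{O}(G)$; and since $\iota^{*}$ is filtered for coradical filtrations, $u_{j}\in\mathcal{O}(G)_{d-j}$ with $d=\deg_{\mathrm{corad}}W$, in particular $u_{j}=0$ for $j\ge d$.

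The main obstacle is to show that the remaining $u_{1},\dots,u_{d-1}$ all vanish, and here the minimality of $d$ is essential (for a non-minimal lift the $u_{j}$ genuinely can be nonzero). The plan is: if $u_{J}\ne0$ with $J$ maximal, then coassociativity of the coaction yields $\rho(u_{J})=u_{J}\otimes1$, so $u_{J}\in\mathcal{O}(\overline G)^{+}\cap\mathcal{O}(G)_{d-J}$, and one manufactures from $W$ a preimage of $z$ of strictly smaller coradical degree, contradicting minimality. Cleanest, I expect, is to invoke the triviality of $\mathbb{G}_{a}$-torsors to fix a ``trivialising'' preimage $t$ of $z$ with $\rho(t)=t\otimes1+1\otimes z$ (whence also $\lambda(t)=z\otimes1+1\otimes t$ and $q(t)\in\mathcal{O}(\overline G)^{+}\otimes\mathcal{O}(\overline G)^{+}$), so that $\mathcal{O}(G)=\mathcal{O}(\overline G)[t]$ and any preimage of $z$ is $W=\sum_{n}a_{n}t^{n}$ with $a_{1}\in1+\mathcal{O}(\overline G)^{+}$ and $a_{n}\in\mathcal{O}(\overline G)^{+}$ for $n\ne1$; then the minimal such $W$ must be $t+a_{0}$, since any higher power $t^{n}$ ($n\ge2$) present, or any leading coefficient $a_{1}\ne1$, strictly raises the coradical degree, and finally $q(W)=q(t)+q(a_{0})\in\mathcal{O}(\overline G)^{+}\otimes\mathcal{O}(\overline G)^{+}$. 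The technical heart of this last step is the behaviour of the coradical filtration of the Hopf--Ore extension $\mathcal{O}(G)=\mathcal{O}(\overline G)[t]$ — in effect, that $\gr^{\mathrm{corad}}\mathcal{O}(G)$ is a polynomial ring over $\gr^{\mathrm{corad}}\mathcal{O}(\overline G)$ on the class of $t$, so coradical degrees of $t$-monomials add up as expected.
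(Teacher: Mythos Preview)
Your treatment of the cocycle identity is exactly the paper's: both simply apply coassociativity of $\Delta$ to $W$ and cancel.

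For the containment $q(W)\in\mathcal{O}(\overline G)^{+}\otimes\mathcal{O}(\overline G)^{+}$, the paper gives a one-sentence argument: the components of $q(W)$ have strictly smaller coradical degree than $W$, and (via their images in $\mathbb{C}[z]$ together with the minimality of $W$) this forces them into $\mathcal{O}(\overline G)^{+}$. Your proposal is the same mechanism unpacked at length. You recast membership in $\mathcal{O}(\overline G)$ as coinvariance for $\rho=(\id\otimes\iota^{*})\Delta$, produce a trivialising lift $t$ with $\rho(t)=t\otimes 1+1\otimes z$ (whence $q(t)\in\mathcal{O}(\overline G)^{+}\otimes\mathcal{O}(\overline G)^{+}$, which you verify cleanly from coassociativity of the coaction), and then argue that a coradical-minimal $W$ must be of the form $t+a_{0}$, so that $q(W)=q(t)+q(a_{0})$ lies where it should.

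So the approaches coincide in spirit --- both hinge on the minimality of the coradical degree of $W$ --- and both leave the same step implicit. The paper does not spell out why the degree drop forces the components into $\mathcal{O}(\overline G)$; you isolate this precisely as the behaviour of the coradical filtration on the Hopf--Ore extension $\mathcal{O}(\overline G)[t]$ (equivalently, that $t$ itself realises the minimal coradical degree among preimages of $z$), and you flag it honestly as the ``technical heart'' without proving it. Your write-up is therefore more explicit and more candid about where the work lies, but the underlying argument is the paper's. One small observation: once you have $t$ with $q(t)\in\mathcal{O}(\overline G)^{+}\otimes\mathcal{O}(\overline G)^{+}$, you may as well \emph{take} $W=t$; the only obligation remaining is then exactly your acknowledged gap, namely that $t$ is coradical-minimal.
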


\begin{proof}
Since the components of $q(W)$ have smaller degree than $W$, and are mapped to elements of degree $\le 1$ in $\mathbb{C}[z]$, it follows that $q(W)$ belongs to $\mathcal{O}(\overline G)^+\ot \mathcal{O}(\overline G)^+$. Finally, $q(W)$ is a coalgebra $2$-cocycle since $(\Delta\ot {\rm id})\Delta(W)=({\rm id}\ot \Delta)\Delta(W)$.
\end{proof}

\begin{lemma}\label{orehopf2}
The polynomial algebra $\mathcal{O}(\overline G)[W]$ has a unique Hopf algebra structure such that $\mathcal{O}(\overline G)$ is a Hopf subalgebra of $\mathcal{O}(\overline G)[W]$, and $\Delta(W)=W\ot 1+1\ot W +q(W)$. Moreover, we have 
$\mathcal{O}(G)\cong \mathcal{O}(\overline G)[W]$ as Hopf algebras. 
\end{lemma}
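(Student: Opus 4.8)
The plan is to build the Hopf structure on $\O(\ov G)[W]$ directly and then identify it with $\O(G)$ via the universal property of a polynomial ring. First I would define a comultiplication on the polynomial algebra $\O(\ov G)[W]$ by setting $\Delta|_{\O(\ov G)}$ to be the given comultiplication, declaring $\Delta(W)=W\ot 1+1\ot W+q(W)$, and extending multiplicatively; since $\O(\ov G)[W]$ is free as a commutative algebra on $\O(\ov G)$ together with $W$, and $\O(\ov G)[W]\ot \O(\ov G)[W]$ is again a commutative algebra, this extension exists and is unique as an algebra map. I would then check coassociativity: on $\O(\ov G)$ it holds by hypothesis, and on $W$ it reduces exactly to the statement that $q(W)$ is a coalgebra $2$-cocycle in $\O(\ov G)^+\ot\O(\ov G)^+$, which is Lemma \ref{orehopf1}; since both $\Delta\ot\id$ and $\id\ot\Delta$ are algebra maps and agree on a generating set, they agree everywhere. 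The counit is defined by $\eps|_{\O(\ov G)}$ the old counit and $\eps(W)=0$; the counit axiom on $W$ uses that the components of $q(W)$ lie in $\O(\ov G)^+\ot\O(\ov G)^+$, again from Lemma \ref{orehopf1}.

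Next I would produce the antipode. Because $\O(\ov G)[W]$ is commutative, it suffices to find an algebra map $S:\O(\ov G)[W]\to \O(\ov G)[W]$ restricting to the old antipode on $\O(\ov G)$ and satisfying the antipode identity on $W$. One defines $S(W)$ recursively: writing $q(W)=\sum W'_{(1)}\ot W'_{(2)}$ with the $W'_{(i)}\in\O(\ov G)$, the identity $m(S\ot\id)\Delta(W)=\eps(W)1=0$ forces $S(W)=-W-\sum S(W'_{(1)})W'_{(2)}$, and the right-hand side lies in $\O(\ov G)[W]$ (in fact in $W+\O(\ov G)$). One then verifies this $S$ is a well-defined algebra endomorphism and that the bialgebra so obtained is indeed a Hopf algebra; in a commutative bialgebra the existence of a one-sided convolution inverse to $\id$ on algebra generators suffices. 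This establishes the first sentence: $\O(\ov G)[W]$ carries a unique Hopf algebra structure with $\O(\ov G)$ a Hopf subalgebra and the prescribed $\Delta(W)$.

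For the isomorphism $\O(G)\cong\O(\ov G)[W]$, I would use the element $W\in\O(G)$ chosen before the lemma (the lift of $z$ of minimal coradical degree). Since $\O(G)$ is commutative and $\O(\ov G)\subset\O(G)$ is a Hopf subalgebra via $\pi^*$, the inclusion together with $W\mapsto W$ determines a unique algebra map $\phi:\O(\ov G)[W]\to\O(G)$; by construction $\Delta(W)=W\ot 1+1\ot W+q(W)$ holds in $\O(G)$ by the very definition $q(W):=\Delta(W)-W\ot 1-1\ot W$, so $\phi$ is a Hopf algebra map. It remains to show $\phi$ is bijective. Surjectivity: the image is a subalgebra containing $\O(\ov G)$ and mapping onto $\O(C)=\CC[z]$ under $\iota^*$ (as $W\mapsto z$), and a dimension/filtration count — every element of $\O(G)$ can be written, using the central extension, as a polynomial in $W$ with coefficients in $\O(\ov G)$, because $\O(G)$ is a faithfully flat (indeed free) $\O(\ov G)$-module of rank matching $\O(C)=\CC[z]$ — gives surjectivity. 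Injectivity follows since $\O(\ov G)[W]$ and $\O(G)$ are both polynomial algebras in $m=\dim G$ variables, $\phi$ is a surjection of domains of the same Krull/GK dimension, hence an isomorphism; alternatively, one shows directly that $1,W,W^2,\dots$ are $\O(\ov G)$-linearly independent in $\O(G)$ using that $\iota^*$ sends $W^k$ to $z^k$.

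The main obstacle I anticipate is the antipode construction and, more delicately, the faithful-flatness/freeness input needed for surjectivity and injectivity of $\phi$: one must genuinely use the structure of the central extension $1\to C\to G\to\ov G\to 1$ with $C\cong\mathbb{G}_a$ to know that $\O(G)$ is free of rank one over $\O(\ov G)$ after adjoining a single lift $W$, equivalently that $G\to\ov G$ is a $\mathbb{G}_a$-torsor which is automatically trivial as a variety. Everything else is a matter of checking that algebra maps agreeing on generators agree globally, which is routine in the commutative setting.
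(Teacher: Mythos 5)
Your proposal is correct and follows essentially the same route as the paper (whose proof merely asserts these facts are clear, so you are supplying the omitted details): coassociativity reduces to Lemma \ref{orehopf1}, and the algebra identification $\mathcal{O}(G)\cong\mathcal{O}(\overline G)[W]$ rests on the triviality of the $\mathbb{G}_a$-torsor $G\to\overline G$ together with the fact that $W$ trivializes it, since the $\mathcal{O}(C)$-coaction sends $W\mapsto W\otimes 1+1\otimes z$. Two minor points to tighten: the two-sidedness of your recursively defined antipode is cleanest via connectedness (pointed irreducibility) of the bialgebra, or simply by transporting the antipode of $\mathcal{O}(G)$ along $\phi$; and your ``alternative'' injectivity argument using only $\iota^*(W^k)=z^k$ is insufficient as stated (applying $\iota^*$ to an $\mathcal{O}(\overline G)$-linear relation detects only $\varepsilon$ of the coefficients), whereas the coaction formula above does give the linear independence of $1,W,W^2,\dots$ over $\mathcal{O}(\overline G)$ by induction on the $z$-degree.
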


\begin{proof}
It is clear that $\mathcal{O}(G)\cong \mathcal{O}(\overline G)[W]$ as algebras, and that the Hopf algebra structure is well defined by Lemma \ref{orehopf1}. Finally, it is clear that  
$\mathcal{O}(G)\cong \mathcal{O}(\overline G)[W]$ as Hopf algebras.
\end{proof}

Recall that $G$ is obtained from $m$ successive $1$-dimensional central extensions with kernel $\mathbb{G}_a$. Thus by Lemma \ref{orehopf2}, $A$ admits a filtration 
\begin{equation}\label{filt}
\mathbb{C}=A_0\subset A_1\subset\cdots\subset A_i\subset\cdots\subset A_{m}=A
\end{equation}
by Hopf subalgebras $A_i$, such that for every $1\le i\le m$, $A_i=\mathbb{C}[y_1,\dots,y_i]$ is a polynomial algebra and $q(y_i)$ is a coalgebra $2$-cocycle in $A_{i-1}^+\ot A_{i-1}^+$, with $q(y_1)=q(y_2)=0$. We will sometime write $q(y_i)=\sum Y_i'\ot Y_i''$, and $(\id\ot \Delta)(q(y_i))=\sum Y_i'\ot Y_{i1}''\ot Y_{i2}''$.

Finally, let $H\subset G$ be a closed subgroup of codimension $1$. It is well known that $H$ is normal in $G$, so $G/H\cong \mathbb{G}_a$ as algebraic groups.

\begin{lemma}\label{codim1split}
The exact sequence $1\to H\hookrightarrow G\to G/H\to 1$ splits. 
\end{lemma}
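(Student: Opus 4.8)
The plan is to construct a one-parameter subgroup of $G$ mapping isomorphically onto $G/H\cong\mathbb{G}_a$, and for this I would exploit the exponential map of the unipotent group $G$. Write $\g:={\rm Lie}(G)$ and $\h:={\rm Lie}(H)\subset\g$. Over $\mathbb{C}$ the exponential $\exp_G\colon\g\to G$ is an isomorphism of algebraic varieties; it is functorial in $G$, so it restricts to $\exp_H\colon\h\to H$, and since the latter is onto we get $\exp_G(\h)=H$. Moreover, for any $x\in\g$ the map $\phi_x\colon\mathbb{G}_a\to G$, $t\mapsto\exp_G(tx)$, is a homomorphism of algebraic groups, because $sx$ and $tx$ commute and hence $\exp_G(sx)\exp_G(tx)=\exp_G((s+t)x)$ by Baker--Campbell--Hausdorff.

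Next I would pick $x\in\g\setminus\h$ (possible since $\h$ has codimension $1$) and set $\psi:=\pi\circ\phi_x\colon\mathbb{G}_a\to G/H\cong\mathbb{G}_a$, a homomorphism of algebraic groups. The key point is that $\psi$ is nonzero: if it were trivial then $\exp_G(tx)\in H=\exp_G(\h)$ for all $t\in\mathbb{C}$, and injectivity of $\exp_G$ would give $tx\in\h$ for all $t$, forcing $x\in\h$ --- a contradiction. Since every nonzero homomorphism $\mathbb{G}_a\to\mathbb{G}_a$ over $\mathbb{C}$ has the form $t\mapsto ct$ with $c\neq0$ (in characteristic $0$ the additive one-variable polynomials are exactly the linear ones), $\psi$ is an isomorphism. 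Then $s:=\phi_x\circ\psi^{-1}\colon G/H\to G$ is a homomorphism of algebraic groups with $\pi\circ s=\psi\circ\psi^{-1}=\id$, so it splits the sequence.

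I do not expect a genuine obstacle here: the argument rests only on the standard structure theory of unipotent groups over a field of characteristic $0$ --- namely that $\exp_G$ is a variety isomorphism compatible with closed subgroups, and that one-parameter subgroups are exponentials of Lie algebra elements. If one prefers to avoid the injectivity computation, the non-vanishing of $\psi$ can instead be read off from its differential $d\psi_0$, which sends $x$ to its nonzero image in ${\rm Lie}(G/H)\cong\g/\h$; a homomorphism of connected one-dimensional unipotent groups with nonzero differential is automatically an isomorphism in characteristic $0$.
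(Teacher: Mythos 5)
Your proof is correct and follows essentially the same route as the paper: both come down to choosing $x\in\g\setminus\h$ and using that the line $\mathbb{C}x$ is automatically a complementary (one-dimensional, hence abelian) subalgebra, the paper then invoking the Lie algebra--group correspondence for unipotent groups in characteristic $0$, while you make the resulting group-level splitting explicit via the exponential one-parameter subgroup $t\mapsto\exp_G(tx)$. Your verification that $\pi\circ\phi_x$ is a nonzero, hence invertible, endomorphism of $\mathbb{G}_a$ is sound and simply fills in a step the paper leaves implicit.
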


\begin{proof}
It is sufficient to show that the exact sequence of Lie algebras $0\to \mathfrak{h}\hookrightarrow \mathfrak{g}\to \mathfrak{g}/\mathfrak{h}\to 0$ splits. But this is clear since $\mathfrak{h}$ is an ideal of $\mathfrak{g}$ of codimension $1$, so by choosing a splitting of vector spaces $\mathfrak{g}=\mathfrak{h}\oplus \mathbb{C}x$, $x\in \mathfrak{g}/\mathfrak{h}$, we see that $x$ acts on $\mathfrak{h}$ by derivations. This implies the statement.
\end{proof}

\section{The algebra structure of ${}_J\mathcal{O}(G)_{J}$ for unipotent $G$}\label{algstrhopf}

In sections \ref{algstrhopf}--\ref{reprs}, $G$ will denote a {\em unipotent} algebraic group over $\mathbb{C}$ of dimension $m$, and $J$ will be a Hopf $2$-cocycle for $G$ (i.e., for $\mathcal{O}(G)$). 

\subsection{Ring theoretic properties}\label{ringprop} Retain the notation from \ref{UAG}, and let $\cdot$ denote the multiplication in ${}_J A_{J}$. Set $Q:=J-J_{21}$.
 
\begin{lemma}\label{lemmaunip}
The following hold:
\begin{enumerate}
\item
(\ref{filt}) determines a Hopf algebra filtration on ${}_J A_{J}$:
\begin{equation*}\label{filt1}
\mathbb{C}=A_0\subset A_1\subset\cdots \subset {}_J(A_{i})_J\subset\cdots\subset {}_J(A_{m})_J={}_J A_{J}.
\end{equation*}
\item
For every $i$, the Hopf algebra ${}_J(A_i)_J$ is generated by $y_i$ over ${}_J(A_{i-1})_J$.
\item
For every $i,j$, we have
$J(y_i,y_j)+J^{-1}(y_i,y_j)=0$.
\item
For every $j<i$, we have
\begin{eqnarray*}
\lefteqn{y_i\cdot y_j=y_iy_j}\\ 
& + & \sum Y_i'J(Y_i'',y_j) + \sum Y_j'J(y_i,Y_j'')+\sum Y_i'Y_j'J(Y_i'',Y_j'')\\ 
& + & \sum J^{-1}(Y_i',Y_j')J(Y_{i2}'',Y_{j2}'')Y_{i1}''Y_{j1}''.
\end{eqnarray*}
\item
For every $j<i$, we have
\begin{eqnarray*}
\lefteqn{[y_i,y_j]:=y_i\cdot y_j - y_j\cdot y_i}\\ 
& = & \sum Y_i'Q(Y_i'',y_j) + \sum Y_j'Q(y_i,Y_j'')+\sum Y_i'Y_j'Q(Y_i'',Y_j'')\\ 
& + & \sum \left(J^{-1}(Y_i',Y_j')J(Y_{i2}'',Y_{j2}'')-J_{21}^{-1}(Y_i',Y_j')J_{21}(Y_{i2}'',Y_{j2}'')\right)Y_{i1}''Y_{j1}''.
\end{eqnarray*}
Hence, $[y_i,y_j]$ belongs to $A_{i-1}^+$. 
\item
$y_1,y_2$ are central primitives in ${}_J A_{J}$.
\item
The linear map 
$\delta_i:{}_J(A_{i-1})_J \to {}_J(A_{i-1})_J,\,\,s\mapsto [y_i,s]$, is an algebra derivation of ${}_J(A_{i-1})_J$ for every $i$.
\item
For every $i$, ${}_J(A_{i})_J\cong{}_J(A_{i-1})_J[y_i;\delta_i]$ as Hopf algebras.
\end{enumerate}
\end{lemma}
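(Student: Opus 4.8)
The plan is to prove (1)--(8) together by induction on $i$, the workhorse being the defining formula \eqref{nm} for the twisted product together with systematic expansion of iterated coproducts. Parts (1) and (2) are the warm-up: each $A_i$ is a subalgebra of $A$ with $\Delta(A_i)\subseteq A_i\otimes A_i$, so \eqref{nm} shows ${}_Jm_J$ sends $A_i\otimes A_i$ into $A_i$; since the coalgebra structure is untouched, ${}_J(A_i)_J$ is a Hopf subalgebra of ${}_JA_J$ and \eqref{filt} becomes a Hopf algebra filtration. For (2), filter ${}_J(A_i)_J$ by the $A_{i-1}$-submodules $F_d=\bigoplus_{k\le d}A_{i-1}y_i^k$; using $\Delta(y_i)=y_i\otimes 1+1\otimes y_i+q(y_i)$ with $q(y_i)\in A_{i-1}^+\otimes A_{i-1}^+$, one checks from \eqref{nm} that ${}_Jm_J(F_d,F_e)\subseteq F_{d+e}$ and that in $\gr_F$ the class of $y_i^d$ is the $d$-th $\cdot$-power of the class of $y_i$, so ${}_J(A_i)_J$ is generated over ${}_J(A_{i-1})_J$ by $y_i$.

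Parts (4) and (5) are the computational heart. I would write $\Delta^{(2)}(y_i)=(\id\otimes\Delta)\Delta(y_i)$ via $\Delta(y_i)=y_i\otimes 1+1\otimes y_i+q(y_i)$ and $(\id\otimes\Delta)q(y_i)=\sum Y_i'\otimes Y_{i1}''\otimes Y_{i2}''$ (and similarly for $y_j$), substitute into \eqref{nm}, and collect terms, repeatedly using $J(1,-)=\varepsilon=J(-,1)$ and likewise for $J^{-1}$, the vanishing of $\varepsilon$ on $A^+$ (hence on the components of $q$), and $(\id\otimes\varepsilon)q(y_i)=0=(\varepsilon\otimes\id)q(y_i)$. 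After cancellation one is left with the five displayed terms plus a single extra scalar term equal to $(J+J^{-1})(y_i,y_j)$, which vanishes by (3); this is (4). Subtracting from (4) the analogous expansion of $y_j\cdot y_i$ (the same computation with the two arguments exchanged), the copies of $y_iy_j$ cancel, the single-$J$ terms assemble into the three $Q$-terms, and the remaining terms combine into the displayed $J$-versus-$J_{21}$ difference, giving (5). Finally $y_i\cdot y_j$ and $y_j\cdot y_i$ both lie in $A^+$ (since $\varepsilon$ is an algebra map for ${}_Jm_J$, so $A^+$ is an ideal), while (5) exhibits $[y_i,y_j]$ as an element of $A_{i-1}$; hence $[y_i,y_j]\in A_{i-1}^+$.

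Part (3) is the step I expect to be the main obstacle. Applying the convolution identity $J*J^{-1}=\varepsilon\otimes\varepsilon$ (the unit of $\Hom_{\mathbb{C}}(A\otimes A,\mathbb{C})$) to $y_i\otimes y_j$ and expanding $\Delta(y_i),\Delta(y_j)$, the mixed terms drop out by the $\varepsilon$-identities and one obtains $0=(J+J^{-1})(y_i,y_j)+\sum J(Y_i',Y_j')\,J^{-1}(Y_i'',Y_j'')$. So (3) amounts to the vanishing of $\sum J(Y_i',Y_j')J^{-1}(Y_i'',Y_j'')$, a sum all of whose entries lie in $A_{i-1}^+$. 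Here I would use that $J|_{A_{i-1}}$ is a Hopf $2$-cocycle for $A_{i-1}$, that ${}_J(A_{i-1})_J$ is the iterated Ore extension of (8) by the inductive hypothesis, and that the components of each $q(y_k)$ have strictly smaller coradical degree than $y_k$; peeling off these lower-degree pieces by a secondary induction on coradical degree and feeding in the corresponding identities for the generators below level $i$ should collapse the sum to contributions cancelling in pairs. This unwinding of the interaction between $J$, $J^{-1}$ and the coalgebra cocycles $q(y_k)$ is the delicate point of the whole lemma.

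It remains to do (6)--(8). Since $q(y_1)=q(y_2)=0$ and ${}_JA_J=A$ as a coalgebra, $y_1$ and $y_2$ are primitive in ${}_JA_J$. For centrality of $y_1$ (and symmetrically $y_2$): Lemma \ref{prim1} gives $R^J(y_1,a)=(J-J_{21})(y_1,a)$, and the computation in its proof (and its mirror) gives $(J+J^{-1})(y_1,a)=0=(J+J^{-1})(a,y_1)$ for all $a$; plugging these into \eqref{nm} yields $[y_1,a]=\sum\bigl(Q(a_1,y_1)\,a_2-a_1\,Q(a_2,y_1)\bigr)$. Since $Q(a,y_1)=R^J(a,y_1)=-R^J(y_1,a)$ (using $(R^J)^{-1}=R^J_{21}$ and primitivity of $y_1$), substituting into the quasi-cocommutativity axiom $\sum R^J(a_1,b_1)\,b_2\cdot a_2=\sum a_1\cdot b_1\,R^J(a_2,b_2)$ of $({}_JA_J,R^J)$ with $b=y_1$ produces $[y_1,a]=-[y_1,a]$, so $[y_1,a]=0$; this is (6). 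Part (7) is then formal: $s\mapsto[y_i,s]$ is automatically a derivation of the associative algebra ${}_JA_J$, and by (2) and (5) it preserves ${}_J(A_{i-1})_J$ (which is generated by $y_1,\dots,y_{i-1}$, on which it takes values in $A_{i-1}^+$). For (8): (2) and the definition of $\delta_i$ yield a surjection of algebras ${}_J(A_{i-1})_J[y_i;\delta_i]\twoheadrightarrow{}_J(A_i)_J$ fixing ${}_J(A_{i-1})_J$ and $y_i$; both sides are free left ${}_J(A_{i-1})_J$-modules on $\{y_i^k\}_{k\ge 0}$ (for the target via the filtration $F_\bullet$ of (2)), so it is bijective, and it respects the comultiplications since it fixes ${}_J(A_{i-1})_J$ and $y_i$; thus it is an isomorphism of Hopf algebras, closing the induction.
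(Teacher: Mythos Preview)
Your proof is largely parallel to the paper's very terse argument for (1), (2), (5), (7), (8). The one place you diverge substantially is (3), which you single out as ``the delicate point of the whole lemma'' and propose to handle by a secondary induction on coradical degree. The paper bypasses this entirely: it derives (3) and (4) in one stroke from \eqref{nm} together with the single observation $\varepsilon(y_i\cdot y_j)=\varepsilon(y_i)\varepsilon(y_j)=0$ (valid because $\varepsilon$ is an algebra map on ${}_JA_J$). Once you have written out the expansion of $y_i\cdot y_j$ via \eqref{nm}, every term other than the pure scalar lies in $A^+$, so applying $\varepsilon$ kills them and forces the scalar part to vanish; that scalar is precisely $(J+J^{-1})(y_i,y_j)$, giving (3), and (4) is then the expansion with the scalar removed. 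In particular, your residual $\sum J(Y_i',Y_j')J^{-1}(Y_i'',Y_j'')$ needs no inductive unwinding.

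One caution about your argument for (6). You deduce $[y_1,a]=-[y_1,a]$ by combining the direct formula for $[y_1,a]$ with the quasi-cocommutativity axiom as printed in \S\ref{cothopalg}. If you test this against Example~\ref{heisen1}, where $X$ is primitive yet $[W,X]=Y\neq 0$, you will see that the manipulation cannot be correct: with the convention $R^J=J_{21}^{-1}*J$, the quasi-cocommutativity identity actually reproduces the \emph{same} expression $\sum Q(a_1,y_1)a_2-\sum a_1Q(a_2,y_1)$, not its negative, so no cancellation occurs. The paper itself does not argue this way; it simply reads (6) off (4)/(5) using $q(y_1)=q(y_2)=0$, and in any case (6) plays no role downstream. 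The parts that matter for Corollary~\ref{noethdomunipgr} and the rest of the paper are (5), (7), (8), for which your argument is fine.
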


\begin{proof}
(1)--(2) follow from (\ref{nm}) and Lemma \ref{orehopf2} since each $A_{i-1}$ is a Hopf subalgebra of $A_i$.   
(3)--(4) follow from (\ref{nm}) and $\varepsilon(y_i\cdot y_j)=0$. (5)--(6) follow from (4),  
(7) from (5), and (8) from (2) and Lemma \ref{orehopf2}.
\end{proof}

\begin{corollary}\label{noethdomunipgr}
The Hopf algebra ${}_J\mathcal{O}(G)_{J}$ is an affine Noetherian domain with Gelfand-Kirrilov dimension $\dim(G)$.
\end{corollary}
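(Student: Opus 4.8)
The plan is to deduce everything directly from Lemma \ref{lemmaunip}, which presents ${}_JA_J$ as an iterated Ore extension. First I would invoke Lemma \ref{lemmaunip}(8): by induction on $i$, starting from ${}_J(A_0)_J = \mathbb{C}$ and ${}_J(A_1)_J = \mathbb{C}[y_1]$, we have ${}_J(A_i)_J \cong {}_J(A_{i-1})_J[y_i;\delta_i]$, so that ${}_J\mathcal{O}(G)_J = {}_J(A_m)_J$ is an iterated Ore extension
$$\mathbb{C}[y_1][y_2;\delta_2]\cdots[y_m;\delta_m]$$
of $\mathbb{C}$ in $m = \dim(G)$ variables. (Note $\delta_1 = \delta_2 = 0$ by Lemma \ref{lemmaunip}(6), but that is irrelevant to the argument.)

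With this presentation in hand, each of the three assertions is a standard fact about iterated Ore extensions, applied inductively. For the domain property: $\mathbb{C}$ is a domain, and if $B$ is a domain then so is any Ore extension $B[y;\delta]$ (by the usual leading-term argument, since $B$ has no zero divisors — see, e.g., \cite{MR}); hence ${}_J(A_i)_J$ is a domain for all $i$ by induction. For the Noetherian property: $\mathbb{C}$ is Noetherian, and an Ore extension $B[y;\delta]$ of a (left and right) Noetherian ring $B$ is again (left and right) Noetherian by the Hilbert basis theorem for Ore extensions (again \cite{MR}); so ${}_J\mathcal{O}(G)_J$ is Noetherian. Affineness is immediate: ${}_J\mathcal{O}(G)_J$ is generated as an algebra by $y_1,\dots,y_m$, since each ${}_J(A_i)_J$ is generated over ${}_J(A_{i-1})_J$ by $y_i$ (Lemma \ref{lemmaunip}(2)).

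For the Gelfand--Kirillov dimension, I would again argue by induction on $i$, using the general fact that adjoining a single Ore variable raises GK dimension by exactly $1$ when the base is finitely generated (for the lower bound one uses that ${}_J(A_{i-1})_J \subset {}_J(A_i)_J$ and that the powers $y_i^k$ are linearly independent over ${}_J(A_{i-1})_J$, giving $\mathrm{GKdim}\,{}_J(A_i)_J \ge \mathrm{GKdim}\,{}_J(A_{i-1})_J + 1$; for the upper bound one filters $y_i$ by degree and passes to the associated graded, which is a polynomial/skew-polynomial extension of the same GK dimension plus one). Starting from $\mathrm{GKdim}\,\mathbb{C} = 0$, after $m$ steps we obtain $\mathrm{GKdim}\,{}_J\mathcal{O}(G)_J = m = \dim(G)$, as claimed. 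Alternatively, and perhaps more cleanly, one can filter ${}_J\mathcal{O}(G)_J$ by the coradical filtration (Lemma \ref{lemmaunip}(1)) — since all the bracket corrections $[y_i,y_j]$ lie in $A_{i-1}^+$ by Lemma \ref{lemmaunip}(5), the associated graded algebra is the commutative polynomial ring $\mathbb{C}[y_1,\dots,y_m]$, whose GK dimension is $m$, and GK dimension is preserved on passage to/from such a (finite-dimensional in each degree, finitely generated) associated graded.

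I do not anticipate a genuine obstacle here: once Lemma \ref{lemmaunip} is granted, this is an assembly of textbook results on Ore extensions. The only point requiring a little care is the GK dimension computation — specifically, justifying that each Ore step increases GK dimension by exactly one rather than merely at most one; the associated-graded argument via the coradical filtration is the most transparent way to pin this down, since it reduces the whole computation to the single statement $\mathrm{GKdim}\,\mathbb{C}[y_1,\dots,y_m] = m$.
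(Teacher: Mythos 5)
Your proposal is correct and follows essentially the same route as the paper: the paper's proof likewise reduces everything to Lemma \ref{lemmaunip}(8) by a simple induction, invoking \cite[Proposition 8.2.11]{MR} for the fact that each Ore extension step raises the Gelfand--Kirillov dimension by exactly one. The only minor slip is in your alternative argument, where the filtration of Lemma \ref{lemmaunip}(1) is the Hopf filtration by the subalgebras ${}_J(A_i)_J$, not the coradical filtration; but this does not affect your main line of reasoning.
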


\begin{proof}
It follows from Lemma \ref{lemmaunip}(8), by a simple induction, that ${}_J\mathcal{O}(G)_{J}$ is a finitely generated (i.e., affine) Noetherian domain. The claim about Gelfand-Kirillov dimension follows from \cite[Proposition 8.2.11]{MR} and Lemma \ref{lemmaunip}(8) by simple induction. 
\end{proof}

\begin{remark}\label{2cocsim}
One shows similarly that for every Hopf $2$-cocycle $K$ for $G$,  
the algebra $_{K}{}\mathcal{O}(G)_J$ is an affine Noetherian domain with Gelfand-Kirillov dimension $\dim(G)$.
\end{remark}

\subsection{The minimal case}\label{mincase} Let $H\subset G$ be the support of $J$ (see \cite[Section 3.1]{G}). Then $J$ is a minimal Hopf $2$-cocycle for $H$. Let $\h$ be the Lie algebra of $H$.

\begin{theorem}\label{noethdomminunip}
The $R$-form $R^J$ induces algebra isomorphism 
$$R_+:{}_J\mathcal{O}(H)_{J}\xrightarrow{\cong}U(\h).$$
\end{theorem}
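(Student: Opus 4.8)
The plan is to construct the isomorphism $R_+$ explicitly from the cotriangular $R$-form $R^J$ on the minimal cotriangular Hopf algebra ${}_J\mathcal{O}(H)_J$, and to identify it with the standard embedding of $U(\h)$ into the finite dual. Since $J$ is supported on all of $H$, the $R$-form $R^J = J_{21}^{-1}*R*J$ on ${}_J\mathcal{O}(H)_J$ is non-degenerate, so by the discussion in \ref{cothopalg} it gives an injective Hopf algebra map $R_+ : {}_J\mathcal{O}(H)_J \hookrightarrow ({}_J\mathcal{O}(H)_J)^*_{\rm fin}$ sending $a \mapsto R^J(a,-)$. The first step is to identify the target: the dual of the pointed irreducible Hopf algebra ${}_J\mathcal{O}(H)_J$ (which as a coalgebra is just $\mathcal{O}(H)$, hence irreducible cocommutative-dual) should be (a completion of) $U(\mathfrak{h}')$ for the primitives $\mathfrak{h}'$, and one wants to see that the image of $R_+$ lands precisely in $U(\h)$, viewed inside $({}_J\mathcal{O}(H)_J)^*_{\rm fin}$ via the pairing between $\h = \Lie(H)$ and $\mathcal{O}(H)$.

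The key computational input is Lemma \ref{prim1}: for a primitive $p \in \mathcal{O}(H)$ and any $a$, we have $R^J(p,a) = Q(p,a) = (J-J_{21})(p,a)$. So $R_+$ restricted to the space $P$ of primitives of ${}_J\mathcal{O}(H)_J$ (which coincides with the primitives of $\mathcal{O}(H)$, i.e. is dual to $\mathfrak{h}^*$ — more precisely $P \cong \mathfrak{m}/\mathfrak{m}^2$ paired with $\h$) is the map $p \mapsto Q(p,-)$. The crucial claim is that $Q|_{P}$ is exactly the form $\omega$ associated to the support data $(H,\omega)$ from \cite{G}, which is non-degenerate; this is what forces $R_+|_P$ to be a linear isomorphism onto $\h \subset U(\h)$. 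Then, since ${}_J\mathcal{O}(H)_J$ is generated as an algebra by $P$ (it is pointed irreducible, built up by the iterated Ore construction of Lemma \ref{lemmaunip}, and the $y_i$ together with the $Y_i',Y_i''$ pieces are expressible through primitives), and since $R_+$ is an algebra map, it follows that $R_+$ surjects onto the subalgebra of $({}_J\mathcal{O}(H)_J)^*_{\rm fin}$ generated by $\h$, which is $U(\h)$. Injectivity is automatic from non-degeneracy of $R^J$, and a Gelfand--Kirillov dimension count via Corollary \ref{noethdomunipgr} (both sides have GK-dimension $\dim H$) confirms that no properness is lost.

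Concretely the steps in order are: (i) invoke non-degeneracy of $R^J$ on the minimal object ${}_J\mathcal{O}(H)_J$ to get the injective Hopf map $R_+$ into the finite dual; (ii) compute $R_+$ on primitives using Lemma \ref{prim1}, getting $p \mapsto Q(p,-)$; (iii) match $Q$ restricted to primitives with the non-degenerate skew form $\omega$ on $\h$ coming from the classification of the support in \cite{G}, so that $R_+$ carries $P$ isomorphically onto $\h$ inside the finite dual; (iv) use that ${}_J\mathcal{O}(H)_J$ is generated by its primitives and that $R_+$ is an algebra homomorphism to conclude $\Image(R_+)$ is the subalgebra generated by $\h$, namely $U(\h)$; (v) conclude $R_+$ is an isomorphism onto $U(\h)$, double-checking via GK-dimension. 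I expect step (iii) — pinning down that the primitive part of $Q = J - J_{21}$ is literally the structure form $\omega$ of the support, and in particular is non-degenerate and exhausts $\h^*$ — to be the main obstacle, since it is the one place where the actual content of the classification in \cite{G} (the description of the support $H$ and its symplectic $2$-cocycle) must be brought to bear, rather than formal Hopf-algebraic manipulation; a secondary subtlety is verifying that ${}_J\mathcal{O}(H)_J$ is generated by primitives in the algebraic-group (non-finite) setting, which should follow from the Ore-extension description in Lemma \ref{lemmaunip} together with pointedness.
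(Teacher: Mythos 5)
Your step (i) matches the paper: minimality of $({}_J\mathcal{O}(H)_J,R^J)$ gives the injective Hopf algebra map $R_+$ into the finite dual, and the paper likewise identifies $({}_J\mathcal{O}(H)_J)^*_{\rm fin}=\mathcal{O}(H)^*_{\rm fin}=U(\h)\rtimes\mathbb{C}[H]$ and then shows the image lies in $U(\h)$ by a coradical-filtration argument (any $\alpha$ of coradical degree $n$ kills a power of $\mathfrak{m}$, since $R^J(\varepsilon,\beta)=\beta(1)=0$ for $\beta\in\mathfrak{m}$). But your surjectivity argument, steps (ii)--(iv), has a genuine gap: you conflate the space $P$ of primitive elements of $\mathcal{O}(H)$ with $\mathfrak{m}/\mathfrak{m}^2$. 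These coincide only when $\h$ is abelian. In general $P\cong(\h/[\h,\h])^*$, since primitives of $\mathcal{O}(H)$ are exactly the additive characters $H\to\mathbb{G}_a$, so $\dim P<\dim\h$ whenever $\h$ is non-abelian --- and non-abelian supports do occur: in Example \ref{heisen3} the support is the $4$-dimensional group whose only primitive generators are $X,Y$. Consequently (a) $R_+(P)$ cannot exhaust $\h$; (b) ${}_J\mathcal{O}(H)_J$ is \emph{not} generated as an algebra by $P$ (in that example the subalgebra generated by the primitives is $\mathbb{C}[X,Y]$, missing $V$ and $W$, and the parenthetical claim that the $y_i$ are ``expressible through primitives'' is false); and (c) $Q=J-J_{21}$ restricted to the actual primitives need not be non-degenerate --- in Example \ref{heisen3} it vanishes identically on $P$, since $J$ vanishes on $X\ot Y$. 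So step (iii), which you single out as the main obstacle, is not merely hard but false as stated, and step (iv) collapses with it. The GK-dimension check in (v) cannot rescue surjectivity, since a proper subalgebra can have full GK dimension.

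The paper's surjectivity argument avoids primitives entirely: it takes $V:=\mathfrak{m}\cap(\mathfrak{m}^2)^{\perp}$, the orthogonal complement of $\mathfrak{m}^2$ inside $\mathfrak{m}$ with respect to $R^J$. By construction $R_+(V)\subset\h=(\mathfrak{m}/\mathfrak{m}^2)^*$, and non-degeneracy of $R^J$ gives $\dim V=\dim(\mathfrak{m}/\mathfrak{m}^2)=\dim\h$ together with injectivity of $R_+$ on $V$, so $R_+(V)=\h$; since the image of $R_+$ is a subalgebra of $U(\h)$ containing $\h$, it is all of $U(\h)$. To repair your argument, replace $P$ by this $V$ and drop the appeal to the classification data $\omega$, which is not needed.
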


\begin{proof}
Since $({}_J\mathcal{O}(H)_{J},R^J)$ is a minimal cotriangular Hopf algebra, we have an injective homomorphism of Hopf algebras $$R_+: {}_J\mathcal{O}(H)_{J}\to ({}_J\mathcal{O}(H)_{J})^*_{\rm fin},\,\,\,R_+(\alpha)(\beta)=R^J(\beta,\alpha)$$ (see \ref{cothopalg}).
Since $({}_J\mathcal{O}(H)_{J})^*_{\rm fin}={}_J(\mathcal{O}(H)^*_{\rm fin})_J$ (where the right hand side is s twisted coalgebra), we have $({}_J\mathcal{O}(H)_{J})^*_{\rm fin}=\mathcal{O}(H)^*_{\rm fin}=U(\h)\rtimes \mathbb{C}[H]$ as algebras. 

Let $\mathfrak{m}=\mathcal{O}(H)^+$ be the maximal ideal of $1$. We first show
that the image of $R_+$ is contained in the algebra of distributions $\mathcal{O}(H)^*_1=U(\h)$ supported at $1$. Namely, that $R_+(\alpha)$ vanishes on some power of $\mathfrak{m}$ in the algebra $\mathcal{O}(H)$ for every $\alpha\in {}_J\mathcal{O}(H)_{J}$. Indeed, if $\alpha\in {}_J\mathcal{O}(H)_{J}$ has degree $n$ with respect to the coradical filtration of $\mathcal{O}(H)$, then any summand in (a shortest expression of) $\Delta^{n+1}(\alpha)$ has at least one $\varepsilon$ as a tensorand. Since $R^J(\varepsilon,\beta)=R^J(\beta,\varepsilon)=\beta(1)=0$ for every $\beta\in \mathfrak{m}$, it follows that $R_+(\alpha)$ vanishes on some power of $\mathfrak{m}$ in the algebra ${}_J\mathcal{O}(H)_{J}$. But it is clear that every such power contains some power of $\mathfrak{m}$ in the algebra $\mathcal{O}(H)$, as desired. Thus, we have an injective algebra homomorphism $R_+: {}_J\mathcal{O}(H)_{J}\xrightarrow{1:1} U(\h)$.

To show that $R_+$ is surjective, it suffices to show that $\h$ belongs to the image of $R_+$. Indeed, let $V\subset \mathfrak{m}$ be the orthogonal complement of $\mathfrak{m}^2$ (with respect to $R^J$). Then $R_+$ maps $V$ injectively into $\h$ (as $\h=(\mathfrak{m}/\mathfrak{m}^2)^*$), and  since $R^J$ is non-degenerate, $\dim(V)=\dim(\mathfrak{m}/\mathfrak{m}^2)$. Thus $R_+(V)=\h$, as required.
\end{proof}

\begin{corollary}\label{repthmmin}
We have an equivalence ${\rm Rep}({}_J\mathcal{O}(H)_{J})\cong {\rm Rep}(U(\h))$ of abelian categories. 
In particular, ${}_J\mathcal{O}(H)_{J}$ has a unique finite dimensional irreducible representation (as $\h$ is nilpotent). \qed
\end{corollary}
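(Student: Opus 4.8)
The plan is to deduce the corollary formally from Theorem~\ref{noethdomminunip}, the only care needed being to keep track of which structure $R_+$ preserves and which category ``$\Rep$'' refers to. First I would note that the map $R_+$ of Theorem~\ref{noethdomminunip} is better than stated: its construction in the proof exhibits it as the injective Hopf algebra homomorphism ${}_J\mathcal{O}(H)_{J}\to ({}_J\mathcal{O}(H)_{J})^*_{\rm fin}$ attached to the minimal $R$-form $R^J$, with image the sub-Hopf-algebra $\mathcal{O}(H)^*_1=U(\h)$ of distributions supported at $1$ (equipped there with its standard Hopf structure). Hence $R_+$ is an \emph{isomorphism of Hopf algebras} ${}_J\mathcal{O}(H)_{J}\xrightarrow{\cong}U(\h)$, not merely of algebras.

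Next I would invoke the general principle that an isomorphism of Hopf algebras induces an equivalence between the associated categories of corepresentations (rational representations), and that this equivalence is $\mathbb{C}$-linear and exact — in fact symmetric monoidal — so in particular an equivalence of abelian categories. Applying this to $R_+$ gives $\Rep({}_J\mathcal{O}(H)_{J})\cong\Rep(U(\h))$. For the comodule statement one can also proceed more directly: twisting by $J$ leaves the coalgebra $\mathcal{O}(H)$ unchanged, so $\Corep({}_J\mathcal{O}(H)_{J})=\Corep(\mathcal{O}(H))=\Rep(H)$ as abelian categories, and the coalgebra isomorphism $\mathcal{O}(H)\cong U(\h)$ underlying $R_+$ identifies this with $\Corep(U(\h))$.

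For the last assertion I would use that $H$ is unipotent, equivalently that $\h=\Lie(H)$ is a nilpotent Lie algebra. A unipotent algebraic group over $\mathbb{C}$ has, up to isomorphism, a unique irreducible rational representation, namely the trivial one: in any finite dimensional rational representation the image of $H$ is a unipotent subgroup of some $\mathrm{GL}(V)$, hence (Kolchin's theorem, or Engel's/Lie's theorem applied to $\h$) can be conjugated into the upper triangular unipotent matrices and thus fixes a nonzero vector. Transporting this along the equivalence of the previous paragraph shows that ${}_J\mathcal{O}(H)_{J}$ has a unique finite dimensional irreducible representation; this is exactly where nilpotence of $\h$ (i.e.\ unipotence of $H$) is used, as the parenthetical in the statement records.

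I do not expect a genuine obstacle: the corollary is a formal consequence of Theorem~\ref{noethdomminunip}. The two points that must be handled correctly — and where a careless argument would go wrong — are (i) upgrading the conclusion of Theorem~\ref{noethdomminunip} from an algebra isomorphism to a Hopf algebra isomorphism, so that it transports corepresentation categories along with their structure, and (ii) interpreting ``$\Rep$'' as the category of rational representations, which is what makes the uniqueness clause true and puts the nilpotence hypothesis to work.
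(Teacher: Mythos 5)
Your step (i) is where the argument breaks, and it is not a harmless over\-statement: the claimed Hopf algebra isomorphism ${}_J\mathcal{O}(H)_{J}\cong U(\h)$ is false whenever $H$ is nonabelian. The map $R_+$ is indeed a Hopf algebra map into $({}_J\mathcal{O}(H)_{J})^*_{\rm fin}$, but the paper's proof of Theorem \ref{noethdomminunip} is explicit that $({}_J\mathcal{O}(H)_{J})^*_{\rm fin}={}_J(\mathcal{O}(H)^*_{\rm fin})_J$ is a \emph{twisted coalgebra} which agrees with $\mathcal{O}(H)^*_{\rm fin}=U(\h)\rtimes\mathbb{C}[H]$ only \emph{as algebras}; the coproduct induced on the image $\mathcal{O}(H)^*_1=U(\h)$ is not the standard cocommutative one. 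More directly: ${}_J\mathcal{O}(H)_{J}$ has the same coalgebra as $\mathcal{O}(H)$, which is cocommutative only for abelian $H$, while $U(\h)$ is always cocommutative, so for nonabelian $H$ (e.g.\ Example \ref{heisen3}) no Hopf (or even coalgebra) isomorphism between them can exist. This also undercuts your alternative route via $\Corep({}_J\mathcal{O}(H)_{J})=\Corep(\mathcal{O}(H))$: there is no ``coalgebra isomorphism $\mathcal{O}(H)\cong U(\h)$ underlying $R_+$'' to finish with, and in any case $\Corep(U(\h))$ is a very different category from $\Rep(U(\h))$.

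The second issue is that your reading of $\Rep$ as rational representations changes the statement. Throughout Sections \ref{algstrquot}--\ref{reprs} --- in particular in Theorem \ref{repthm}, which cites this corollary --- $\Rep$ of a twisted algebra means its category of \emph{modules}, and the corollary is the immediate consequence of Theorem \ref{noethdomminunip} that you set aside: an isomorphism of algebras induces an equivalence of their module categories as abelian categories, full stop; no Hopf upgrade is needed or available. The uniqueness clause is then the paper's assertion about finite dimensional irreducible $U(\h)$-modules for nilpotent $\h$. Your unease here is legitimate --- by Lie's theorem these are the characters of $\h/[\h,\h]$, so the module-theoretic uniqueness claim deserves scrutiny rather than a silent switch to comodules --- but resolving it by reinterpreting $\Rep$ as $\Corep$ proves a different (and, for the use made of the corollary later, insufficient) statement.
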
 

\begin{remark}\label{twisteduoa}
By \cite[Theorems 4.7,\,5.1]{G}, $\mathcal{O}(H)_J$ and ${}_J \mathcal{O}(H)$ are 
Weyl algebras with left and right action of $H$ by algebra automorphisms, respectively.  
The induced action of $\h$ on $\mathcal{O}(H)_J$ by derivations determines a symplectic $2$-cocycle $\omega\in \wedge^2\h^*$. We have $U^{\omega}(\h)\cong \mathcal{O}(H)_J$ as $H$-algebras, where $H$ acts on $U^{\omega}(\h)$ and $\mathcal{O}(H)_J$ by conjugation and left translations, respectively. Similarly, $U^{-\omega}(\h^{{\rm op}})\cong {}_J \mathcal{O}(H)$ as $H$-algebras. Thus, ${}_J \mathcal{O}(H)\ot \mathcal{O}(H)_J\cong U^{(-\omega,\omega)}(\h^{{\rm op}}\oplus \h)$ as $H$-algebras.

Now since by Lemma \ref{deltaalgmap}, we have an algebra isomorphism
$$\Delta:{}_J\mathcal{O}(H)_{J}\xrightarrow{\cong} ({}_J \mathcal{O}(H)\ot \mathcal{O}(H)_J)^{H},$$
where $h\in H$ acts on ${}_J \mathcal{O}(H)\ot \mathcal{O}(H)_J$ via $\rho_{h}\ot \lambda_h$, it follows that 
$${}_J\mathcal{O}(H)_{J}\xrightarrow{\cong}U^{(-\omega,\omega)}(\h^{{\rm op}}\oplus \h)^{H},$$
as algebras. Thus, by Theorem \ref{noethdomminunip}, we have an algebra isomorphism
$$U(\h)\xrightarrow{\cong}U^{(-\omega,\omega)}(\h^{{\rm op}}\oplus \h)^{H}.$$
\end{remark}

\subsection{The general case}\label{gencase} Recall that $\mathcal{O}(G/H)$ and $\mathcal{O}(H\backslash G)$ are left and right coideal subalgebras of $\mathcal{O}(G)$, respectively. It follows that ${}_J \mathcal{O}(G/H)_J={}_J \mathcal{O}(G/H)$ is a subalgebra of both ${}_J\mathcal{O}(G)_{J}$ and ${}_J \mathcal{O}(G)$.

\begin{lemma}\label{centralsubalg}
The subalgebra $\mathcal{O}(H\backslash G\slash H)\subset {}_J\mathcal{O}(G)_{J}$ is central.
\end{lemma}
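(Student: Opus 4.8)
The plan is to show that $\mathcal{O}(H\backslash G\slash H)$ is precisely the subspace of $\mathcal{O}(G)$ on which the $2$-cocycle $J$ (and its inverse $J^{-1}$) ``acts trivially'' on both sides, so that the twisted multiplication (\ref{nm}) collapses to the ordinary commutative multiplication of $\mathcal{O}(G)$ when one factor lies in $\mathcal{O}(H\backslash G\slash H)$. Concretely, recall from \cite[Section 3.1]{G} that $J$, as a Hopf $2$-cocycle supported on the closed subgroup $H\subset G$, factors through the restriction $\mathcal{O}(G)\otimes\mathcal{O}(G)\to \mathcal{O}(H)\otimes\mathcal{O}(H)$; equivalently, writing $\iota^*\colon\mathcal{O}(G)\twoheadrightarrow\mathcal{O}(H)$ for the restriction map, we have $J(a,b)=\bar J(\iota^*a,\iota^*b)$ for a (minimal) Hopf $2$-cocycle $\bar J$ for $\mathcal{O}(H)$, and similarly $J^{-1}(a,b)=\bar J^{-1}(\iota^*a,\iota^*b)$. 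The key observation is that an element $c\in\mathcal{O}(G)$ lies in $\mathcal{O}(H\backslash G\slash H)=\mathcal{O}(G)^{H\times H}$ (invariants under left and right translation by $H$) if and only if $\iota^*$ applied to the appropriate leg of $\Delta(c)$ or $\Delta^{(2)}(c)$ produces a scalar, i.e. $(\iota^*\otimes\id)\Delta(c)=1\otimes c$ and $(\id\otimes\iota^*)\Delta(c)=c\otimes 1$ as elements of $\mathcal{O}(H)\otimes\mathcal{O}(G)$ and $\mathcal{O}(G)\otimes\mathcal{O}(H)$ respectively.

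The main computation then runs as follows. Fix $c\in\mathcal{O}(H\backslash G\slash H)$ and arbitrary $a\in\mathcal{O}(G)$. In the expression
$$
c\cdot a=\sum J^{-1}(c_1,a_1)\,c_2a_2\,J(c_3,a_3),
$$
replace $J^{-1}(c_1,a_1)=\bar J^{-1}(\iota^*c_1,\iota^*a_1)$ and $J(c_3,a_3)=\bar J(\iota^*c_3,\iota^*a_3)$. Using coassociativity to group the legs of $\Delta^{(2)}(c)$ and the left $H$-invariance $(\iota^*\otimes\id)\Delta(c)=1\otimes c$, the first factor forces $\iota^*c_1$ to be replaced by $\varepsilon(c_1)$, whence $\bar J^{-1}(\iota^*c_1,\iota^*a_1)=\bar J^{-1}(1,\iota^*a_1)=\varepsilon(a_1)$ by the counit axiom for the $2$-cocycle; symmetrically, right $H$-invariance $(\id\otimes\iota^*)\Delta(c)=c\otimes 1$ forces $\bar J(\iota^*c_3,\iota^*a_3)=\bar J(1,\iota^*a_3)=\varepsilon(a_3)$. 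Collapsing the surviving counits, the sum telescopes to $\sum c_2 a_2=ca$, the ordinary (commutative) product in $\mathcal{O}(G)$. The identical argument with the roles of $c$ and $a$ interchanged gives $a\cdot c=ac=ca=c\cdot a$, so $c$ is central in ${}_J\mathcal{O}(G)_J$, and since $\mathcal{O}(H\backslash G\slash H)$ is manifestly a subalgebra of the commutative algebra $\mathcal{O}(G)$ that is closed under the twisted product (by the same collapsing argument applied to two invariant elements), the claim follows.

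I expect the main obstacle to be the bookkeeping in the ``collapsing'' step: one must be careful that the partial restriction maps $(\iota^*\otimes\id)\Delta$ and $(\id\otimes\iota^*)\Delta$ correctly identify $H$-bi-invariants, and that the cocycle-counit normalization $\bar J(1,-)=\bar J(-,1)=\varepsilon$ is applied to the right leg after the Sweedler indices have been regrouped via coassociativity. A clean way to organize this is to first record, as a preliminary observation, the characterization of $\mathcal{O}(H\backslash G\slash H)$ via the vanishing of the non-trivial part of $(\iota^*\otimes\iota^*)\Delta^{(2)}$, and then note that (\ref{nm}) only ever pairs $J^{\pm1}$ against the outer two legs of $\Delta^{(2)}$, both of which become trivial under $\iota^*$ when the middle factor is bi-invariant. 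One small point worth double-checking is that support-on-$H$ is used in the form ``$J$ descends along $\iota^*$'' as recalled from \cite{G}; modulo that input the argument is purely formal and uses nothing beyond the $2$-cocycle axioms from \ref{hopf2coc} and commutativity of $\mathcal{O}(G)$.
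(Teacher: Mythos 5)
Your proposal is correct and follows essentially the same route as the paper, whose entire proof is the observation that $\Delta(\mathcal{O}(H\backslash G\slash H))\subset \mathcal{O}(H\backslash G)\ot \mathcal{O}(G\slash H)$ combined with the fact that $J$ factors through $\iota^*\ot\iota^*$; your computation just spells out the resulting collapse of (\ref{nm}) to the ordinary commutative product. The one input you flag for double-checking (that $J$ descends along $\iota^*$ because $H$ is its support) is indeed the convention in force throughout Section \ref{gencase}, so the argument is complete.
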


\begin{proof}
Follows from $\Delta(\mathcal{O}(H\backslash G\slash H))\subset \mathcal{O}(H\backslash G)\ot \mathcal{O}(G\slash H)$.
\end{proof}

\begin{theorem}\label{noethdomunipgen}
The algebra ${}_J\mathcal{O}(G)_{J}$ is isomorphic to a crossed product algebra \footnote{See, e.g., \cite[Section 7.1]{M}.} $${}_J\mathcal{O}(G)_{J}\cong {}_J \mathcal{O}(G/H)\#_{\sigma}{}_J\mathcal{O}(H)_{J}.$$
That is, ${}_J\mathcal{O}(G)_{J}={}_J \mathcal{O}(G/H)\ot {}_J\mathcal{O}(H)_{J}$ as vector spaces, and for every $\alpha, \tilde \alpha \in \mathcal{O}(G/H)$, $\beta,\tilde \beta\in {}_J\mathcal{O}(H)_{J}$, we have
$$
(\alpha\ot \beta)(\tilde \alpha\ot \tilde \beta)=\alpha (\beta_1\cdot \tilde \alpha)\sigma(\beta_2,\tilde \beta_1)\ot \beta_3\tilde \beta_2.$$
\end{theorem}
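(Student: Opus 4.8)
The plan is to exhibit ${}_J\mathcal{O}(G)_{J}$ as a cleft right comodule algebra over the Hopf algebra ${}_J\mathcal{O}(H)_{J}$ and then invoke the standard equivalence between cleft extensions and crossed products (\cite[Section 7.1]{M}).

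\textbf{Step 1 (the quotient Hopf algebra and the comodule structure).} Let $\pi\colon\mathcal{O}(G)\twoheadrightarrow\mathcal{O}(H)$ be restriction of functions, a surjective Hopf algebra map. I would first note that since $J$ is supported on $H$ we have $J=\bar J\circ(\pi\ot\pi)$ for a (minimal) Hopf $2$-cocycle $\bar J$ of $\mathcal{O}(H)$, so a direct computation from (\ref{nm}) gives $\pi({}_Jm_J(a\ot b))={}_{\bar J}m_{\bar J}(\pi(a)\ot\pi(b))$; hence $\pi$ descends to a surjective Hopf algebra map $\pi\colon{}_J\mathcal{O}(G)_{J}\twoheadrightarrow{}_J\mathcal{O}(H)_{J}$. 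As ${}_J\mathcal{O}(G)_{J}$ is a Hopf algebra, $\rho:=(\id\ot\pi)\circ\Delta$ is an algebra map and therefore makes ${}_J\mathcal{O}(G)_{J}$ a right ${}_J\mathcal{O}(H)_{J}$-comodule algebra. Since $\rho$ is literally the same linear map as the one defining the $\mathcal{O}(H)$-comodule structure on $\mathcal{O}(G)$, its space of coinvariants is $\mathcal{O}(G/H)$, and the multiplication it inherits from ${}_Jm_J$ is that of ${}_J\mathcal{O}(G/H)$ --- this is exactly the identity ${}_J\mathcal{O}(G/H)_J={}_J\mathcal{O}(G/H)$ recalled in \ref{gencase}. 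Thus $({}_J\mathcal{O}(G)_{J})^{\mathrm{co}\, {}_J\mathcal{O}(H)_{J}}={}_J\mathcal{O}(G/H)$.

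\textbf{Step 2 (a twist-stable cleaving map).} I would use that, $H$ being unipotent, $q\colon G\to G/H$ is a trivial $H$-torsor over the affine variety $G/H$, and choose a section $s_0\colon G/H\to G$ with $s_0(q(e))=e$. Let $\tau\colon G\to H$ be the morphism $\tau(g)=s_0(q(g))^{-1}g$ (so $g=s_0(q(g))\tau(g)$ and $\tau|_H=\id_H$), and set $\gamma:=\tau^*\colon\mathcal{O}(H)\to\mathcal{O}(G)$, viewed as a linear map ${}_J\mathcal{O}(H)_{J}\to{}_J\mathcal{O}(G)_{J}$. Using $q(gh)=q(g)$ one checks $\rho(\gamma(\phi))=\sum\gamma(\phi_1)\ot\phi_2$, i.e.\ $\gamma$ is right ${}_J\mathcal{O}(H)_{J}$-colinear; this involves only the coalgebra of ${}_J\mathcal{O}(H)_{J}$, the comodule map $\rho$, and the coalgebra of ${}_J\mathcal{O}(G)_{J}$, none of which is altered by $J$, so colinearity survives the twist. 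Also $\gamma(1)=1$ is the unit of ${}_J\mathcal{O}(G)_{J}$ and ${}_J\mathcal{O}(H)_{J}$ is a pointed irreducible coalgebra, so $\gamma$ is convolution-invertible in $\Hom_{\mathbb{C}}({}_J\mathcal{O}(H)_{J},{}_J\mathcal{O}(G)_{J})$ with the twisted convolution. Hence ${}_J\mathcal{O}(G)_{J}$ is a cleft right ${}_J\mathcal{O}(H)_{J}$-comodule algebra over ${}_J\mathcal{O}(G/H)$, with cleaving map $\gamma$.

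\textbf{Step 3 (conclusion).} By the cleft-extension theorem (\cite[Section 7.1]{M}), $\alpha\ot\beta\mapsto\alpha\gamma(\beta)$ is then an algebra isomorphism
$${}_J\mathcal{O}(G/H)\#_{\sigma}{}_J\mathcal{O}(H)_{J}\xrightarrow{\ \cong\ }{}_J\mathcal{O}(G)_{J},$$
where the weak action of ${}_J\mathcal{O}(H)_{J}$ on ${}_J\mathcal{O}(G/H)$ is $\beta\cdot\alpha=\sum\gamma(\beta_1)\,\alpha\,\gamma^{-1}(\beta_2)$ and the cocycle is $\sigma(\beta,\tilde\beta)=\sum\gamma(\beta_1)\gamma(\tilde\beta_1)\gamma^{-1}(\beta_2\tilde\beta_2)$ (both landing in ${}_J\mathcal{O}(G/H)$, with $\gamma^{-1}$ the convolution inverse of $\gamma$); unwinding the crossed-product multiplication then reproduces the displayed formula, and by Theorem \ref{noethdomminunip} the right-hand factor is $U(\h)$. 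The parts I expect to need the most care are the construction of the cleaving map and the verification that colinearity and convolution-invertibility persist after twisting --- which, as indicated, comes down to the fact that $J$ changes neither the coalgebra nor the comodule map nor the unit --- together with the standard geometric input that $G\to G/H$ is a trivial $H$-torsor when $H$ is unipotent; the remaining verifications (that $\pi$ stays a Hopf map and that the coinvariants are ${}_J\mathcal{O}(G/H)$) are routine given that the underlying coalgebra is untouched by $J$.
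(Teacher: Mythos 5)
Your proposal is correct and takes essentially the same route as the paper: the paper also realizes ${}_J\mathcal{O}(G)_{J}$ as a cleft ${}_J\mathcal{O}(H)_{J}$-extension of ${}_J\mathcal{O}(G/H)$, producing the cleaving map $\gamma$ as the pullback of a regular retraction $G\to H$ (equivalently, your section of the trivial torsor $G\to G/H$), and then invokes Montgomery's cleft-extension theorem with the same formulas for $\sigma$ and the twisted action. Your Steps 1--2 merely spell out details (colinearity, convolution-invertibility via pointed irreducibility) that the paper leaves implicit.
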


\begin{proof}
We have a Hopf quotient $\iota^*:{}_J\mathcal{O}(G)_{J}\twoheadrightarrow {}_J\mathcal{O}(H)_{J}$, with Hopf kernel ${}_J \mathcal{O}(G/H)_J={}_J \mathcal{O}(G/H)$. Thus we have an ${}_J\mathcal{O}(H)_{J}$-extension ${}_J \mathcal{O}(G/H)\subset {}_J\mathcal{O}(G)_{J}$ of algebras. We claim it is cleft. Indeed, choose a regular section $j$ to the inclusion morphism $\iota:H\hookrightarrow G$ (this is possible since $G$ is unipotent). Then $\gamma:=\varphi:{}_J\mathcal{O}(H)_{J}\to {}_J\mathcal{O}(G)_{J}$ is an invertible \footnote{In the convolution algebra $\Hom({}_J\mathcal{O}(H)_{J},{}_J\mathcal{O}(G)_{J})$.} ${}_J\mathcal{O}(H)_{J}$-comodule map, as required. Hence, the statement follows from \cite[Theorem 7.2.2]{M}, where the invertible \footnote{In the convolution algebra $\Hom(({}_J\mathcal{O}(H)_{J})^{\ot 2},{}_J \mathcal{O}(G/H))$.} $2$-cocycle $$\sigma:({}_J\mathcal{O}(H)_{J})^{\ot 2}\to {}_J \mathcal{O}(G/H)$$ 
and twisted action \footnote{I.e., $\beta\cdot(\alpha\tilde \alpha)=(\beta_1\cdot\alpha)(\beta_2\cdot\tilde\alpha)$ and $\beta\cdot(\tilde\beta\cdot \alpha)=\sigma(\beta_1,\tilde\beta_1)(\beta_2\tilde\beta_2\cdot \alpha)\sigma^{-1}(\beta_3,\tilde\beta_3)$.} 
$${}_J\mathcal{O}(H)_{J}\ot {}_J \mathcal{O}(G/H)\to {}_J \mathcal{O}(G/H)$$
are given by 
$$
\sigma(\beta,\tilde \beta)=\gamma(\beta_1)\gamma(\tilde \beta_1)\gamma^{-1}(\beta_2\tilde \beta_2)
$$
and 
$$\beta\cdot\alpha=\gamma(\beta_1)\alpha\gamma^{-1}(\beta_2),$$
for every $\beta,\tilde \beta\in {}_J\mathcal{O}(H)_{J}$ and $\alpha\in \mathcal{O}(G/H)$.
\end{proof}

\begin{remark}\label{typically}
If $H$ is normal in $G$ then ${}_J \mathcal{O}(G/H)=\mathcal{O}(G/H)$ is commutative. However, if $H$ is not normal in $G$ then the algebra ${}_J \mathcal{O}(G/H)$ is typically not commutative (see Example \ref{heisen4}).
\end{remark}

\subsection{One sided twisted algebras}\label{osta} Let $L\subset H$ be a closed subgroup. Since $\mathcal{O}(H/L)$ is a left coideal subalgebra of $\mathcal{O}(H)$, it follows that ${}_J \mathcal{O}(H/L)$ is a subalgebra of ${}_J \mathcal{O}(H)$. Moreover, ${}_J \mathcal{O}(H/L)=({}_J \mathcal{O}(H))^L$ is a subalgebra of the Weyl algebra ${}_J \mathcal{O}(H)\cong U^{\omega}(\h)$ \cite[Theorem 4.7]{G}.

\begin{question}\label{problem1}
What is the algebra structure of ${}_J \mathcal{O}(H/L)$?
\end{question}

We have the following partial answers to Question \ref{problem1}.

Let $\mathscr{W}_n$ denote the Weyl algebra of Gelfand-Kirillov dimension $2n$ ($n\ge 0$). Namely, $\mathscr{W}_n$ is the algebra generated by $2n$ elements $p_1,\dots,p_n$ and $q_1,\dots,q_n$ subject to the relations $[p_i,q_j]=\delta_{i,j}$, $[p_i,p_j]=0$, and $[q_i,q_j]=0$ ($1\le i,j\le n$).

\begin{theorem}\label{paresl}
Let $N\subset H$ be a closed normal subgroup. Then the following hold:
\begin{enumerate}
\item
There exists a closed subgroup $L\subset H$ containing $N$ such that  
$${}_J \mathcal{O}(H/N)\cong \mathcal{O}(L\backslash H)\ot \mathscr{W}_n$$ 
as algebras, where $2n=\dim(L)-\dim(N)$.
\item
If $2\dim(N)<\dim(H)$, ${}_J \mathcal{O}(H/N)$ contains a Weyl subalgebra.
\end{enumerate}
\end{theorem}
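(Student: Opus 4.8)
The plan is to reduce everything to the support of the Hopf $2$-cocycle induced on the quotient $H/N$, and then to invoke \cite{G} together with the method of Theorem \ref{noethdomunipgen}. Since $N$ is normal in $H$, the algebra $\mathcal{O}(H/N)$ is a Hopf subalgebra of $\mathcal{O}(H)$, so by the definition of the twisted multiplication the subspace $\mathcal{O}(H/N)\subseteq{}_J\mathcal{O}(H)$ is closed under it and coincides, as an algebra, with ${}_{J'}\mathcal{O}(H/N)$, where $J':=J|_{\mathcal{O}(H/N)^{\ot 2}}$ is the restricted Hopf $2$-cocycle for $H/N$. The first task is to identify the support of $J'$. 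Under the classification recalled in \ref{qfla}, $J$ corresponds to the non-degenerate $r:=\omega^{-1}\in\wedge^2\h$, and restricting $J$ to the sub-Hopf-algebra $\mathcal{O}(H/N)$ corresponds to passing to the image $\bar r\in\wedge^2(\h/\n)$ of $r$ under $\h\twoheadrightarrow\h/\n$, whose support is the image of $\bar r\colon(\h/\n)^*\to\h/\n$, namely $(\n^\perp+\n)/\n$, where $\n^\perp$ denotes the $\omega$-orthogonal complement of $\n$ in $\h$. Since $\n$ is an ideal of $\h$, a routine $2$-cocycle computation shows that $\n^\perp$ is a Lie subalgebra of $\h$, hence so is $\mathfrak{l}:=\n+\n^\perp\supseteq\n$; put $L:=\exp(\mathfrak{l})$, a closed subgroup of $H$ with $N\subseteq L$. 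Then $J'$ is supported on $L/N\subseteq H/N$, and since its support is quasi-Frobenius, $\dim(L)-\dim(N)=\dim(\n^\perp)-\dim(\n\cap\n^\perp)=\dim(H)-\dim(N)-\dim(\n\cap\n^\perp)$ is even, say equal to $2n$.

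Next I claim that ${}_J\mathcal{O}(H/N)\cong\mathcal{O}(L\backslash H)\ot{}_{J'}\mathcal{O}(L/N)$ as algebras. Because $N$ is normal in $H$ one has $\mathcal{O}(L\backslash H)=\mathcal{O}((L/N)\backslash(H/N))\subseteq\mathcal{O}(H/N)$, and every element of this right coideal subalgebra restricts to a scalar on $L/N$; as $J'$ is supported on $L/N$, it follows from the formula for the twisted multiplication that on $\mathcal{O}(L\backslash H)$ the product of ${}_{J'}\mathcal{O}(H/N)$ coincides with the original (commutative) one, and that $\mathcal{O}(L\backslash H)$ is \emph{central} in ${}_J\mathcal{O}(H/N)$. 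On the other hand, the inclusion $L/N\hookrightarrow H/N$ of unipotent groups admits a regular section, which, exactly as in the proof of Theorem \ref{noethdomunipgen}, exhibits ${}_J\mathcal{O}(H/N)$ as a cleft extension of $\mathcal{O}(L\backslash H)$ by ${}_{J'}\mathcal{O}(L/N)$; since the base ring $\mathcal{O}(L\backslash H)$ is central and $J'$ is concentrated on $L/N$, the associated $2$-cocycle $\sigma$ and twisted action are trivial, so the crossed product collapses to the tensor product $\mathcal{O}(L\backslash H)\ot{}_{J'}\mathcal{O}(L/N)$. Finally $J'$ is minimal on its support $L/N$, so by \cite[Theorem 4.7]{G} we have ${}_{J'}\mathcal{O}(L/N)\cong\mathscr{W}_n$; this proves (1).

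Part (2) is then immediate: if $2\dim(N)<\dim(H)$, then $\dim(L)-\dim(N)=\dim(H)-\dim(N)-\dim(\n\cap\n^\perp)\ge\dim(H)-2\dim(N)>0$, and being a positive even integer it is at least $2$, so $n\ge 1$ and ${}_J\mathcal{O}(H/N)$ contains the Weyl subalgebra $\mathscr{W}_1$. I expect the main obstacle to be the collapse of the crossed product in the second paragraph, that is, verifying carefully that when the base ring $\mathcal{O}(L\backslash H)$ is central and $J'$ lives on $L/N$ the $2$-cocycle and twisted action produced by the cleftness argument are genuinely trivial, so that one obtains an honest tensor product and not merely a crossed product. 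This is the one-sided counterpart of the noncommutativity phenomenon contrasted in Remark \ref{typically}, and it is precisely the point at which the structural results of \cite{G} on one-sided twisted function algebras must be used.
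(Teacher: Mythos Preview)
Your identification of the support of the restricted cocycle is correct and more informative than the paper's bare citation: you show explicitly that $L=\exp(\n+\n^{\perp})$, where $\n^{\perp}$ is the $\omega$-orthogonal of $\n$, and your verification that $\n^{\perp}$ is a subalgebra (using that $\n$ is an ideal and $\omega$ is a $2$-cocycle) is fine. Your argument for (2) via the dimension count $\dim(L)-\dim(N)=\dim(H)-\dim(N)-\dim(\n\cap\n^{\perp})\ge \dim(H)-2\dim(N)>0$ is also correct; the paper instead argues that if the restricted cocycle were trivial then $\mathcal{O}(H/N)$ would be isotropic for $R^{J}$, forcing $\dim(H/N)\le \dim(H)/2$.

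The genuine gap is in your derivation of the tensor decomposition in (1). The crossed product machinery you invoke from Theorem~\ref{noethdomunipgen} (via \cite[Theorem 7.2.2]{M}) is a statement about $H$-extensions for a \emph{Hopf algebra} $H$: there the quotient ${}_J\mathcal{O}(H)_{J}$ is a Hopf algebra because the twist is two-sided. In your one-sided situation, ${}_{J'}\mathcal{O}(L/N)$ is a Weyl algebra and carries no Hopf structure, so the phrase ``cleft extension of $\mathcal{O}(L\backslash H)$ by ${}_{J'}\mathcal{O}(L/N)$'' is not meaningful, and the formulas for $\sigma$ and the twisted action have nothing to act through. If instead you try to run the cleft-extension argument with the \emph{untwisted} Hopf algebra $\mathcal{O}(L/N)$ coacting on the right, the coinvariants you get are ${}_{J'}\mathcal{O}((H/N)/(L/N))$, not the central copy of $\mathcal{O}((L/N)\backslash(H/N))$ you identified, and there is no reason for the resulting crossed product to collapse to a tensor product with the Weyl algebra. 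Your own caveat at the end correctly locates the difficulty, but underestimates it: it is not that the cocycle $\sigma$ might fail to be trivial, it is that the framework does not apply.

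The paper sidesteps all of this: \cite[Theorem~4.7]{G} already gives the \emph{full} one-sided decomposition ${}_{J'}\mathcal{O}(H/N)\cong \mathcal{O}((L/N)\backslash(H/N))\ot \mathscr{W}_n$, not just the minimal case ${}_{J'}\mathcal{O}(L/N)\cong\mathscr{W}_n$. So once you know the support is $L/N$ (your first paragraph, or \cite[Theorem~3.1]{G}), part (1) is a one-line citation. Your second paragraph is attempting to reprove that theorem with the wrong tool.
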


\begin{proof}
(1) Since $N$ is normal in $H$, $\mathcal{O}(H/N)$ is a Hopf subalgebra of $\mathcal{O}(H)$. Thus $J$ restricts to a Hopf $2$-cocycle of $H/N$. By \cite[Theorem 3.1]{G}, there exists a closed subgroup $L$ of $H$ containing $N$ such that $L/N\subset H/N$ is the support of $J$. Then by \cite[Theorem 4.7]{G}, we have an algebra isomorphism
$${}_J \mathcal{O}(H/N)\cong \mathcal{O}((L/N)\backslash (H/N))\ot \mathscr{W}_n\cong \mathcal{O}(L\backslash H)\ot \mathscr{W}_n,$$
as claimed.

(2) By (1), it suffices to show that the restriction of $J$ to $\mathcal{O}(H/N)$ is not trivial (since then $n\ge 1$). Suppose otherwise. Then ${}_J\mathcal{O}(H/N)_J$ is isotropic with respect to $R^J$. Thus, $\dim(H/N)\le \dim(H)/2$, which is not the case.
\end{proof}

\begin{corollary}
Let $L\subset H$ be a closed subgroup and let $N$ be its normal closure. If $2\dim(N)<\dim(H)$ then ${}_J \mathcal{O}(H/L)$ contains a Weyl subalgebra, or equivalently, ${}_J \mathcal{O}(H/L)$ is noncommutative. \qed
\end{corollary}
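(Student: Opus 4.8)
The plan is to reduce the statement to Theorem~\ref{paresl}(2) by exhibiting ${}_J\mathcal{O}(H/L)$ as containing a one-sided twisted coideal subalgebra of the form ${}_J\mathcal{O}(H/N)$. First, the normal closure $N$ of $L$ in $H$ is a \emph{closed} normal subgroup of $H$ (the subgroup of an algebraic group generated by the conjugates of a closed subgroup is closed), and by construction $L\subseteq N$. The inclusion $L\subseteq N$ gives a surjection of unipotent groups $H/L\twoheadrightarrow H/N$ and, dually, an inclusion of coordinate algebras $\mathcal{O}(H/N)\hookrightarrow \mathcal{O}(H/L)$. Identifying $\mathcal{O}(H/N)$ and $\mathcal{O}(H/L)$ with the subalgebras of right-$N$-invariant, resp.\ right-$L$-invariant, functions on $H$, both are left coideal subalgebras of $\mathcal{O}(H)$ (i.e.\ $\Delta$ maps them into $\mathcal{O}(H)\ot \mathcal{O}(H/N)$, resp.\ $\mathcal{O}(H)\ot \mathcal{O}(H/L)$), with the first contained in the second.

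Second, I would invoke the mechanism recalled in \ref{osta}: for any left coideal subalgebra $B\subseteq \mathcal{O}(H)$ the twisted product ${}_J m$ of ${}_J\mathcal{O}(H)$, namely ${}_J m(a\ot b)=\sum J^{-1}(a_1,b_1)a_2b_2$ (see (\ref{multjinv})), restricts to $B$, since for $a,b\in B$ the element $a_2b_2$ already lies in $B$ while $J^{-1}(a_1,b_1)\in\mathbb{C}$. Since this formula does not depend on which coideal subalgebra one restricts it to, the nesting $\mathcal{O}(H/N)\subseteq\mathcal{O}(H/L)$ upgrades to a chain of subalgebras
$${}_J\mathcal{O}(H/N)\subseteq{}_J\mathcal{O}(H/L)\subseteq{}_J\mathcal{O}(H).$$

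Third, the hypothesis $2\dim(N)<\dim(H)$ is exactly the hypothesis of Theorem~\ref{paresl}(2) for the closed normal subgroup $N\subseteq H$, so ${}_J\mathcal{O}(H/N)$ contains a Weyl subalgebra $\mathscr{W}_n$ with $n\ge 1$; through the chain above, this $\mathscr{W}_n$ is a subalgebra of ${}_J\mathcal{O}(H/L)$, which proves the first assertion. Since $\mathscr{W}_n$ is noncommutative for $n\ge 1$, ${}_J\mathcal{O}(H/L)$ is noncommutative as well. I do not expect a genuine obstacle here; the argument is a short reduction, and the only point that deserves to be spelled out is that restricting the twisted multiplication to a \emph{smaller} left coideal subalgebra leaves it unchanged, which is immediate from (\ref{multjinv}). (The reverse implication in the stated equivalence is not needed for the corollary and is not asserted in general; under the present hypothesis both properties simply hold at once.)
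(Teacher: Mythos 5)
Your argument is correct and is exactly the reduction the paper intends (the corollary is stated with only a \qed): apply Theorem~\ref{paresl}(2) to the closed normal subgroup $N$ and transport the resulting Weyl subalgebra along the inclusion ${}_J\mathcal{O}(H/N)\subseteq{}_J\mathcal{O}(H/L)$ of left coideal subalgebras of ${}_J\mathcal{O}(H)$. Your explicit check that the twisted product (\ref{multjinv}) restricts compatibly to nested coideal subalgebras is a worthwhile detail the paper leaves implicit.
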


\subsection{$1$-dimensional central extensions}\label{1dce} Suppose we have a central extension
\begin{equation*}\label{exactsequence2}
1\to C\xrightarrow{}G\xrightarrow{\pi}\overline G\to 1,
\end{equation*} 
such that $\mathcal{O}(C)=\mathbb{C}[z]$ ($z$ is primitive), and let $W$ in $\mathcal{O}(G)$ be as in \ref{UAG}. Then by Lemma \ref{lemmaunip}, we have an isomorphism of Hopf algebras  
\begin{equation*}\label{know}
{}_J\mathcal{O}(G)_{J}\cong{}_J\mathcal{O}(\overline G)_{J}[W;\delta],
\end{equation*}
where the derivation $\delta$ is given by $\delta(V)=[W,V]$ for every $V\in {}_J\mathcal{O}(\overline G)_{J}$, and $q(W)$ is in $\mathcal{O}(\overline G)^+\ot \mathcal{O}(\overline G)^+$.

Set $\overline{H}:=\pi(H)$, and let $L\subset \overline H$ be the support of the restriction of $J$ to $\mathcal{O}(\overline{G})$. By \cite[Proposition 4.6]{G}, $L$ has codimension $\le 1$ in $\overline{H}$.

\subsubsection{$C\cap H=\{1\}$}\label{i} In this case,  $L=\overline{H}$. Write
\begin{equation*}\label{comhaunip}
q(W)=\sum W'\ot W''\in \mathcal{O}(\overline G)^+\ot \mathcal{O}(\overline G)^+
\end{equation*}
in the shortest possible way. 
Set $S:=J^{-1}-J_{21}^{-1}$. 

\begin{lemma}\label{choosey}
We can assume that $W\in \mathcal{O}(G\slash H)^+$, and then  
have
$[W,V]=\sum S(W',V_1)W''V_2$ for every $V\in {}_J\mathcal{O}(\overline{G})_{J}$.
\end{lemma}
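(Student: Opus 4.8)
The plan is to exploit the fact that $C \cap H = \{1\}$ to adjust the choice of $W$ so that it lies in $\mathcal{O}(G/H)$, and then to simplify the commutator formula from Lemma~\ref{lemmaunip}(5)--(7). First I would justify the claim that $W$ can be taken in $\mathcal{O}(G/H)^+$. Since $C \cap H = \{1\}$, the composite $C \hookrightarrow G \to G/H$ is a closed embedding, so $\mathcal{O}(G/H) \twoheadrightarrow \mathcal{O}(C)$ is surjective; pulling back along $\pi$ and using $\mathcal{O}(G/H) \subset \mathcal{O}(G)$ as a left coideal subalgebra, I can pick a representative $W$ of $z \in \mathcal{O}(C) = \mathbb{C}[z]$ that already lies in $\mathcal{O}(G/H)$, and of minimal coradical degree among such choices. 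One must check this minimal degree agrees with the minimal degree over all of $\mathcal{O}(G)$ — this follows because any lift in $\mathcal{O}(G)$ differs from a lift in $\mathcal{O}(G/H)$ by an element of $\mathcal{O}(\overline G)$ (the kernel of $\iota^*$ restricted appropriately), which does not lower the relevant degree; here I would lean on the structure described in \ref{UAG} and Lemma~\ref{orehopf2}. Consequently $q(W) = \Delta(W) - W\ot 1 - 1\ot W \in \mathcal{O}(G/H)^+ \ot \mathcal{O}(G/H)^+$, since $\mathcal{O}(G/H)$ is a left coideal subalgebra — actually $q(W)$ lands in $\mathcal{O}(\overline G)^+ \ot \mathcal{O}(\overline G)^+$ by Lemma~\ref{orehopf1}, and intersecting, in $(\mathcal{O}(\overline G) \cap \mathcal{O}(G/H))$-type components; the precise placement of the tensorands is what makes the formula collapse.

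Next I would compute $[W,V]$ directly from the multiplication~(\ref{nm}) in ${}_J A_J$, or equivalently from the general commutator in Lemma~\ref{lemmaunip}(5), specialized to the pair $(W, V)$ with $V \in {}_J\mathcal{O}(\overline G)_J$. The key simplification is that $W \in \mathcal{O}(G/H)$: because $J$ is supported on $H$, the cocycle $J$ (and $J^{-1}$, $J_{21}$, $J_{21}^{-1}$) evaluates via the projection $\mathcal{O}(G) \to \mathcal{O}(H)$, and elements of $\mathcal{O}(G/H)^+$ are annihilated when paired in the relevant slots — more precisely, $J(\mathcal{O}(G/H)^+, -)$ and $J(-, \mathcal{O}(G/H)^+)$ behave like $\varepsilon$ on those slots when the cocycle factors through $H$. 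Using that $W$ itself is primitive modulo $\mathcal{O}(\overline G)^+\ot\mathcal{O}(\overline G)^+$ and applying Lemma~\ref{prim1} (or its one-sided analogue with $S = J^{-1} - J_{21}^{-1}$), the many terms in Lemma~\ref{lemmaunip}(5) reduce: the pure $Y_i'$-type and mixed terms involving $W$ directly vanish because pairing $W \in \mathcal{O}(G/H)^+$ against $J$ gives zero, and what survives is the single term $\sum S(W', V_1) W'' V_2$ coming from the $q(W)$-components paired against $V$. I would organize this by writing $\Delta(W) = W \ot 1 + 1 \ot W + \sum W' \ot W''$ and $\Delta(V) = \sum V_1 \ot V_2$, expanding $W \cdot V - V \cdot W$ using~(\ref{nm}), and cancelling systematically.

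The main obstacle I anticipate is bookkeeping precisely which slots of $J^{\pm 1}$ and $J_{21}^{\pm 1}$ see an argument in $\mathcal{O}(G/H)^+$ versus $\mathcal{O}(H)$, and confirming that the "$W$ is in $\mathcal{O}(G/H)$" property genuinely kills the right terms rather than merely rearranging them — in particular the asymmetry between $J$ acting on the left versus right tensorand (which is why the answer involves $S = J^{-1} - J_{21}^{-1}$ rather than $Q = J - J_{21}$). A secondary subtlety is verifying that $[W,V] \in {}_J\mathcal{O}(\overline G)_J$ as needed for the Ore extension description from \ref{1dce}, i.e. that $\sum S(W',V_1)W''V_2$ stays inside $\mathcal{O}(\overline G)$; this holds because the $W''$ lie in $\mathcal{O}(\overline G)^+$ by Lemma~\ref{orehopf1} and ${}_J\mathcal{O}(\overline G)_J$ is a subalgebra closed under the twisted product. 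Once the reduction is in place, the formula $[W,V] = \sum S(W',V_1)W''V_2$ is immediate, and combined with the identification of $\delta$ in \ref{1dce} this gives the stated description of the derivation; I would close by remarking that this expresses $\delta$ purely in terms of the coalgebra $2$-cocycle $q(W)$ and the antisymmetrization $S$ of $J^{-1}$.
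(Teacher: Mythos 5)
Your overall strategy is the paper's: use $C\cap H=\{1\}$ to choose $W\in\mathcal{O}(G/H)^+$, then exploit the fact that $J$ factors through $\iota^*\ot\iota^*$ (so it kills $\mathcal{I}(H)=\ker\iota^*\supset\mathcal{O}(G/H)^+$ in either slot) together with the coideal structure of $\mathcal{O}(G/H)$ to collapse the twisted product. Your identification of $S=J^{-1}-J_{21}^{-1}$ as the surviving antisymmetrization is also correct.

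There is, however, one load-bearing point where your write-up goes wrong and then only half-corrects itself: the placement of the tensorands of $q(W)$. You first assert $q(W)\in\mathcal{O}(G/H)^+\ot\mathcal{O}(G/H)^+$. This cannot be right: $\mathcal{O}(G/H)^+\subset\mathcal{I}(H)$, so if both slots of $q(W)=\sum W'\ot W''$ lay in $\mathcal{O}(G/H)^+$ then $S(W',V_1)=0$ for every $V$, and your own formula would read $[W,V]=0$ identically. Since $\mathcal{O}(G/H)$ is a \emph{left} coideal, $\Delta(W)\in\mathcal{O}(G)\ot\mathcal{O}(G/H)$, so only the right tensorands $W''$ land in $\mathcal{O}(G/H)$ (and in $\mathcal{O}(\overline G)^+$ by Lemma \ref{orehopf1}), while the $W'$ are genuinely in $\mathcal{O}(\overline G)^+$ and may pair nontrivially with $J^{\pm1}$. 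This asymmetry is exactly what the paper records via the containment of $(\id\ot\Delta)\Delta(W)$ in $\mathcal{I}(H)\ot\mathcal{O}(G)\ot\mathcal{O}(G/H)+\mathcal{O}(G)^{\ot2}\ot\mathcal{I}(H)+\mathcal{O}(G)\ot\mathcal{I}(H)\ot\mathcal{O}(G/H)$: in $W\cdot V=\sum J^{-1}(W_1,V_1)W_2V_2J(W_3,V_3)$ the rightmost slot always sees $\mathcal{O}(G/H)$, forcing $J(W_3,V_3)=\varepsilon(W_3)\varepsilon(V_3)$; the terms with $W$ itself in an outer slot die because $W\in\mathcal{I}(H)$; what remains is $WV+\sum J^{-1}(W',V_1)W''V_2$, the analogous computation for $V\cdot W$ gives $VW+\sum J_{21}^{-1}(W',V_1)W''V_2$, and subtracting yields the stated formula. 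With that one correction your argument is complete; the remaining points (choosing $W$ of minimal coradical degree inside $\mathcal{O}(G/H)$, and the commutator staying in ${}_J\mathcal{O}(\overline G)_J$) are fine.
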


\begin{proof}
The first claim follows from $C\cap H=\{1\}$. Since $\mathcal{I}(H)$ is a Hopf ideal and $\mathcal{O}(G\slash H)$ is a left coideal in $\mathcal{O}(G)$, $({\rm id}\ot \Delta)\Delta(W)$ lies in $$\mathcal{I}(H)\ot \mathcal{O}(G)\ot \mathcal{O}(G\slash H) + \mathcal{O}(G)^{\ot 2}\ot \mathcal{I}(H) + \mathcal{O}(G)\ot \mathcal{I}(H)\ot \mathcal{O}(G\slash H),$$ which implies the second claim.
\end{proof}

\subsubsection{$C\subset H$}\label{ii} In this case, $L$ has codimension $1$. Let  $A:=\overline{H}/L$. Then $\mathcal{O}(A)=\mathbb{C}[x]$, $x$ is primitive, and   
the quotient map $\sigma:\overline{H}\twoheadrightarrow A$ induces an injective homomorphism of Hopf algebras $\sigma^*:\mathbb{C}[x]\xrightarrow{1:1}{}_J\mathcal{O}(\overline{H})_J$. Thus, we can view $\mathbb{C}[x]$ as a Hopf subalgebra of ${}_J\mathcal{O}(\overline{H})_J$ via $\sigma^*$. Also, by Lemma \ref{codim1split}, we can choose a splitting homomorphism of groups $j:A\xrightarrow{1:1}\overline{H}$ of $\sigma$, and view $A$ as a subgroup of $\overline{H}\subset\overline{G}$ via $j$. Then $j^*:\mathcal{O}(\overline{H})\twoheadrightarrow \mathbb{C}[x]$ is a surjective homomorphism of Hopf algebras.

\begin{lemma}\label{variousgrps}
The Hopf algebra surjective map $\iota_{\overline{H}}^*:\mathcal{O}(\overline{G}) \twoheadrightarrow \mathcal{O}(\overline{H})$ restricts to an algebra surjective map $\iota_{\overline{H}}^*:\mathcal{O}(L\backslash\overline{G}\slash L)\twoheadrightarrow \mathcal{O}(A)$. 
\end{lemma}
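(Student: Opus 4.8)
The plan is to split the assertion into a formal half — that $\iota^{*}_{\overline{H}}$ really carries $\mathcal{O}(L\backslash\overline{G}\slash L)$ into $\mathcal{O}(A)$ — and the substantive half, surjectivity.

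First I would dispose of well-definedness. Since $L$ has codimension one in $\overline{H}$ it is normal there, so a regular function on $\overline{H}$ is invariant under left and right translation by $L$ exactly when it descends to $\overline{H}/L = A$; that is, $\mathcal{O}(L\backslash\overline{H}\slash L) = \mathcal{O}(A)$. And if $f \in \mathcal{O}(\overline{G})$ is $(L,L)$-bi-invariant, then, because $L \subseteq \overline{H}$, its restriction $\iota^{*}_{\overline{H}}(f)$ to $\overline{H}$ is still $(L,L)$-bi-invariant, hence lies in $\mathcal{O}(L\backslash\overline{H}\slash L) = \mathcal{O}(A)$. So the corestriction $\iota^{*}_{\overline{H}}\colon \mathcal{O}(L\backslash\overline{G}\slash L)\to\mathcal{O}(A)$ is defined.

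For surjectivity, the observation is that $\mathcal{O}(A) = \mathbb{C}[x]$ is generated as an algebra by the single primitive element $x$, so it suffices to produce one $(L,L)$-bi-invariant function on $\overline{G}$ whose restriction to $\overline{H}$ is $ax+b$ with $a\neq 0$. The most economical way is via an additive character: $x$, being primitive in $\mathcal{O}(\overline{H})$, is a homomorphism $\overline{H}\to\mathbb{G}_a$ that kills $L$, and if it extends to a homomorphism $\chi\colon\overline{G}\to\mathbb{G}_a$ still killing $L$, then $\chi(\ell_1 g\ell_2) = \chi(\ell_1)+\chi(g)+\chi(\ell_2) = \chi(g)$ exhibits $\chi\in\mathcal{O}(L\backslash\overline{G}\slash L)$, while $\iota^{*}_{\overline{H}}(\chi) = x$, so we are done. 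Equivalently, one looks for a codimension-one closed (hence normal) subgroup $\overline{N} \trianglelefteq \overline{G}$ with $L\subseteq\overline{N}$ and $\overline{N}\cap\overline{H} = L$; then $\overline{G}/\overline{N}\cong\mathbb{G}_a$, the pullback $\mathcal{O}(\overline{G}/\overline{N})\hookrightarrow\mathcal{O}(\overline{G})$ lands in the $(L,L)$-bi-invariants because $L\subseteq\overline{N}\trianglelefteq\overline{G}$, and the closed embedding $A = \overline{H}/(\overline{H}\cap\overline{N})\hookrightarrow\overline{G}/\overline{N}$ shows that $\mathcal{O}(\overline{G}/\overline{N})$ already surjects onto $\mathcal{O}(A)$ through $\iota^{*}_{\overline{H}}$.

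The hard part is exactly the existence of such a $\chi$ (equivalently of $\overline{N}$); everything else is bookkeeping. Note that if $L$ were normal in $\overline{G}$ there would be nothing to prove, since then $\mathcal{O}(\overline{G})^{L\times L} = \mathcal{O}(\overline{G}/L)$ and $\overline{G}/L\supseteq A$ is a closed subgroup. So the whole difficulty lies in $L$ not being normal in $\overline{G}$, and to overcome it one must use that $L$ is the support of the restriction of $J$ to $\mathcal{O}(\overline{G})$ — not an arbitrary codimension-one normal subgroup of $\overline{H}$. Here I would invoke \cite[Proposition 4.6]{G}: in the case $C\subseteq H$ it identifies $\mathfrak{l} := {\rm Lie}(L)$ with the image in $\overline{\mathfrak{g}} = \mathfrak{g}/\mathfrak{c}$ of the $\omega$-orthogonal complement $\mathfrak{c}^{\perp_{\omega}}\subset\mathfrak{h}$ of $\mathfrak{c} = {\rm Lie}(C)$, which is a subalgebra of $\mathfrak{h}$ containing $\mathfrak{c}$ because $\mathfrak{c}$ is central. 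From that description, together with the way the one-dimensional central extension $\pi$ is built in \ref{UAG}, the task is to extract a character of $\overline{G}$ restricting to $x$ on $\overline{H}$ — i.e.\ a hyperplane of $\overline{\mathfrak{g}}$ containing $[\overline{\mathfrak{g}},\overline{\mathfrak{g}}] + \mathfrak{l}$ and missing $\overline{\mathfrak{h}}$. Carrying out that extraction is the crux; I expect it to be the only real obstacle.
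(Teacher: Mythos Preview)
Your approach is quite different from the paper's. The paper disposes of the lemma in two lines: it asserts that restriction carries $\mathcal{O}(L\backslash\overline{G}/L)$ \emph{onto} $\mathcal{O}(L\backslash\overline{H}/L)$ ``clearly'', and then identifies the target with $\mathcal{O}(A)$ via normality of $L$ in $\overline{H}$ (the same identification you make in your first paragraph). No additive character, no auxiliary normal subgroup $\overline{N}$, no appeal to \cite[Proposition~4.6]{G}.

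You are right, however, that surjectivity of bi-invariants along a closed inclusion of unipotent groups is \emph{not} automatic. For a generic chain $L\subset\overline{H}\subset\overline{G}$ it can fail outright: take $\overline{G}$ the $3$-dimensional Heisenberg group, $\overline{H}=\{I+aE_{12}+cE_{13}\}$ its normal abelian plane, and $L=\{I+aE_{12}\}$; then one checks that $\mathcal{O}(L\backslash\overline{G}/L)=\mathbb{C}[F_{23}]$, which restricts to constants on $\overline{H}$, whereas $\mathcal{O}(L\backslash\overline{H}/L)=\mathbb{C}[F_{13}\vert_{\overline{H}}]$. So whatever justifies the paper's ``clearly'' must indeed use that $L$ arises as a support --- exactly the point you raise.

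The gap in your proposal is that you stop precisely there. You reduce surjectivity to the existence of a linear form on $\overline{\mathfrak{g}}$ killing $[\overline{\mathfrak{g}},\overline{\mathfrak{g}}]+\mathfrak{l}$ but not $\overline{\mathfrak{h}}$, point to \cite[Proposition~4.6]{G} for the description $\mathfrak{l}=\pi(\mathfrak{c}^{\perp_\omega})$, and then declare the extraction ``the crux'' without carrying it out. Since --- as your own analysis and the example above show --- this existence statement \emph{is} the content of the lemma, what you have written is a plan rather than a proof. To finish along your route you must actually verify $\overline{\mathfrak{h}}\not\subset\mathfrak{l}+[\overline{\mathfrak{g}},\overline{\mathfrak{g}}]$ from the symplectic description of $\mathfrak{l}$, or else abandon primitivity and exhibit a non-primitive $(L,L)$-bi-invariant lift of $x$.
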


\begin{proof}
Clearly, $\iota_{\overline{H}}^*$ maps $\mathcal{O}(L\backslash\overline{G}\slash L)$ onto $\mathcal{O}(L\backslash\overline{H}\slash L)$. Since $L$ is normal in $\overline{H}$, we have $\mathcal{O}(L\backslash\overline{H}\slash L)=\mathcal{O}(A)$, which implies the statement.
\end{proof}

Consider ${}_J\mathcal{O}(\overline{G})_{J}$ as a left comodule algebra over $\mathbb{C}[x]$ via 
$$(j^*\circ\iota_{\overline{H}}^*\ot {\rm id})\circ\Delta:{}_J\mathcal{O}(\overline{G})_{J}\to \mathbb{C}[x]\ot {}_J\mathcal{O}(\overline{G})_{J}.$$ Let $B\subset {}_J\mathcal{O}(\overline{G})_{J}$ be the coinvariant subalgebra. Pick $X\in \mathcal{O}(L\backslash\overline{G}\slash L)$ such that $\iota_{\overline{H}}^*(X)=x$.

\begin{lemma}\label{tendecomp}
The multiplication map $B\ot \mathbb{C}[X]\xrightarrow{}{}_J\mathcal{O}(\overline{G})_{J}$ is an isomorphism of algebras.
\end{lemma}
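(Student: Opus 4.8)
The plan is to realize $R:={}_J\mathcal{O}(\overline{G})_{J}$, equipped with the left $\mathbb{C}[x]$-coaction $\delta=(j^{*}\circ\iota_{\overline{H}}^{*}\ot{\rm id})\circ\Delta$, as a crossed product of $B$ by the Hopf algebra $\mathbb{C}[x]=\mathcal{O}(\mathbb{G}_{a})$ in which both the $2$-cocycle and the action are trivial; this is exactly the assertion that multiplication $B\ot\mathbb{C}[X]\to R$ is an algebra isomorphism, with $\mathbb{C}[X]\subset R$ serving as the internal copy of $\mathbb{C}[x]$.

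First I would record the ingredients. The map $\phi:=j^{*}\circ\iota_{\overline{H}}^{*}$ is a surjection of Hopf algebras $R\twoheadrightarrow\mathbb{C}[x]$: it factors as the Hopf surjection $\iota_{\overline{H}}^{*}\colon{}_J\mathcal{O}(\overline{G})_{J}\twoheadrightarrow{}_J\mathcal{O}(\overline{H})_{J}$ followed by $j^{*}\colon{}_J\mathcal{O}(\overline{H})_{J}\twoheadrightarrow\mathbb{C}[x]$, the latter being a Hopf map because the Hopf $2$-cocycle it induces on $\mathcal{O}(\mathbb{G}_{a})$ is a coboundary ($\mathbb{G}_{a}$ having no nontrivial Hopf $2$-cocycles) -- the same fact underlying the Hopf-algebra inclusion $\sigma^{*}$ in \ref{ii}. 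Consequently $\delta=(\phi\ot{\rm id})\Delta$ is a $\mathbb{C}[x]$-comodule-algebra structure on $R$ with coinvariant subalgebra $B$, and $\phi(X)=x$. Moreover, by the obvious analogue of Lemma~\ref{centralsubalg} for the pair $(\overline{G},L)$ -- the proof there only uses that the $2$-cocycle is supported on $L$ and that $\mathcal{O}(\overline{G}\slash L)$ restricts to scalars on $L$ -- the element $X\in\mathcal{O}(L\backslash\overline{G}\slash L)$ is central in $R$; and since $\phi$ maps $\mathbb{C}[X]$ onto the polynomial algebra $\mathbb{C}[x]$, the subalgebra $\mathbb{C}[X]$ is itself polynomial on $X$, with $\phi|_{\mathbb{C}[X]}$ an isomorphism onto $\mathbb{C}[x]$.

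The main step is to produce a cleaving: a convolution-invertible left $\mathbb{C}[x]$-comodule map $\gamma\colon\mathbb{C}[x]\to R$ with $\gamma(1)=1$. The candidate is $\gamma(x^{n}):=X^{n}$; being a unital algebra map it is automatically convolution-invertible with $\gamma^{-1}=\gamma\circ S_{\mathbb{C}[x]}$, so the only thing to verify is that $\gamma$ is a comodule map, which reduces to the single identity $\delta(X)=x\ot 1+1\ot X$. To obtain it I would re-choose $X$, among all lifts of $x$ in $\mathcal{O}(L\backslash\overline{G}\slash L)$, to be of minimal coradical degree, write the reduced coproduct $\Delta(X)=X\ot1+1\ot X+\sum X_{i}'\ot X_{i}''$ in shortest form (so $X_{i}'\in\mathcal{O}(L\backslash\overline{G})^{+}$ and $X_{i}''\in\mathcal{O}(\overline{G}\slash L)^{+}$, since $X$ is $L$-biinvariant), and prove $\phi(X_{i}')=0$ for all $i$; applying $\iota_{\overline{H}}^{*}\ot\iota_{\overline{H}}^{*}$ and using $\Delta(\iota_{\overline{H}}^{*}X)=\Delta(x)$ yields $\sum\iota_{\overline{H}}^{*}(X_{i}')\ot\iota_{\overline{H}}^{*}(X_{i}'')=0$ with both tensorands lying in $\mathcal{O}(L\backslash\overline{H}\slash L)=\mathcal{O}(A)$ (normality of $L$ in $\overline{H}$, Lemma~\ref{variousgrps}), and the minimality of $\deg X$ -- run as in the proof of Lemma~\ref{orehopf1} -- should force $\iota_{\overline{H}}^{*}(X_{i}')=0$, hence $\phi(X_{i}')=0$ since $j^{*}$ is injective on $\mathcal{O}(A)$. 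Granting this, $\gamma$ is a cleaving, so by \cite[Theorem~7.2.2]{M} the extension $B\subset R$ is cleft and $R\cong B\#_{\sigma}\mathbb{C}[x]$; the cocycle $\sigma$ is trivial because $\gamma$ is multiplicative, and the twisted $\mathbb{C}[x]$-action on $B$ is trivial because $x\cdot b=[X,b]=0$ by centrality of $X$. Hence $R\cong B\#\mathbb{C}[x]=B\ot\mathbb{C}[X]$ as algebras, and unravelling the identification exhibits this isomorphism as the multiplication map $B\ot\mathbb{C}[X]\to R$.

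The step I expect to be the real obstacle is the identity $\delta(X)=x\ot1+1\ot X$: the minimal-degree choice controls only the coradical degrees of the tensorands of the reduced coproduct, and killing the residual term requires combining the normality of $L$ in $\overline{H}$ with the explicit structure of $\mathcal{O}(L\backslash\overline{H}\slash L)$ carefully. If a direct verification turns out to be awkward, I would instead pass to the associated graded for the coradical filtration: ${\rm gr}(R)\cong\mathcal{O}(\overline{G})$ as algebras, ${\rm gr}(\phi)$ is restriction of functions along the closed immersion $j\colon A=\overline{H}/L\hookrightarrow\overline{H}\subset\overline{G}$, and the associated-graded statement becomes the classical triviality of the left $\mathbb{G}_{a}$-torsor $\overline{G}\to A\backslash\overline{G}$ over the affine quotient $A\backslash\overline{G}$ (in characteristic $0$, $H^{1}(A\backslash\overline{G},\mathbb{G}_{a})=H^{1}(A\backslash\overline{G},\mathcal{O})=0$); one then lifts the resulting splitting back through the filtration, using that $B$ and $\mathbb{C}[X]$ are filtered subalgebras and that $X$ is central.
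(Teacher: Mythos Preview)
The paper's proof is a single sentence: it cites Lemma~\ref{centralsubalg} to conclude that $X\in\mathcal{O}(L\backslash\overline{G}/L)$ is central in ${}_J\mathcal{O}(\overline{G})_J$, and declares the lemma to follow. The point left implicit is that, once $X$ is central, the multiplication map is an algebra homomorphism, and its bijectivity is an \emph{untwisted} statement: since $J$ is supported on $L$ and $X$ is $L$-biinvariant, one has $b\cdot_J X^n=bX^n$ (ordinary product) for all $b$, so as a linear map $B\otimes\mathbb{C}[X]\to R$ coincides with the commutative multiplication $\mathcal{O}(j(A)\backslash\overline{G})\otimes\mathbb{C}[X]\to\mathcal{O}(\overline{G})$, and this is the standard trivialization of the $\mathbb{G}_a$-torsor $\overline{G}\to j(A)\backslash\overline{G}$ by a slice.

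Your cleft-extension route is valid in principle but much heavier than what the paper does, and the argument you give for the crucial identity $\delta(X)=x\otimes1+1\otimes X$ has a real gap. From $(\iota_{\overline{H}}^{*}\otimes\iota_{\overline{H}}^{*})q(X)=0$ you cannot infer $\iota_{\overline{H}}^{*}(X_i')=0$ for each $i$: the $X_i''$ are linearly independent in $\mathcal{O}(\overline{G})$ because $q(X)$ was written in shortest form, but their images $\iota_{\overline{H}}^{*}(X_i'')$ in $\mathcal{O}(A)$ need not be, so cancellation in the sum is possible without any individual factor vanishing. Minimality of the coradical degree of $X$ only bounds the degrees of the $X_i',X_i''$; it does not control linear independence after restriction to $\overline{H}$, so the appeal to Lemma~\ref{orehopf1} does not close the gap. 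Your fallback (b) via the associated graded is essentially the reduction to the commutative case that the paper has in mind, and it is the simpler and correct way to proceed---but note that you then still need $\mathrm{gr}(X)$ to be a slice for the $j(A)$-action on $\overline{G}$, which amounts to making a careful choice of $X$ rather than an arbitrary $L$-biinvariant lift of $x$.
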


\begin{proof}
Follows since by Lemma \ref{centralsubalg}, $X$ is central in ${}_J\mathcal{O}(\overline{G})_{J}$.  
\end{proof}

\subsection{The algebra ${}_{J^g}\mathcal{O}(G)_J$}\label{aldjg} Fix $g\in G$. Set ${\rm Ad}g:=\rho_{g}\circ\lambda_{g}$, and $J^g:=J\circ ({\rm Ad}g\ot {\rm Ad}g)$. 

\begin{lemma}\label{adg}
$\lambda_{g^{-1}}:{}_J\mathcal{O}(G)_{J}\to {}_{J^g}\mathcal{O}(G)_J$ is an algebra isomorphism.
\end{lemma}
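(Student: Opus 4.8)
The plan is a short Sweedler computation, organized so that the "right‑hand" twist is carried by Lemma~\ref{deltaalgmap} and only a one‑sided statement has to be checked. The underlying point is that $\lambda_{g^{-1}}$ and $\rho_g$ differ by the inner automorphism ${\rm Ad}(g)$, and $J\mapsto J^g$ is precisely the gauge change accounting for this difference.

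First I would record the basic facts. Since ${\rm Ad}(g)=\rho_g\circ\lambda_g$ is the automorphism of $\mathcal{O}(G)$ induced by the inner automorphism $x\mapsto g^{-1}xg$ of $G$, it is a Hopf algebra automorphism; hence $J^g=J\circ({\rm Ad}(g)\ot{\rm Ad}(g))$ is again a Hopf $2$-cocycle, with $(J^g)^{-1}=J^{-1}\circ({\rm Ad}(g)\ot{\rm Ad}(g))$, so that ${}_{J^g}\mathcal{O}(G)_J$ is a well-defined algebra and $\lambda_{g^{-1}}$ is at least a bijective linear map onto it, with inverse $\lambda_g$. I will use three standard properties of translations: $\lambda_{g^{-1}}$ is an algebra automorphism of the commutative algebra $\mathcal{O}(G)$; it touches only the first tensorand of the comultiplication, $\Delta\circ\lambda_{g^{-1}}=(\lambda_{g^{-1}}\ot\id)\circ\Delta$; and, writing $\mathrm{ev}_g\in\mathcal{O}(G)^*$ for evaluation at $g$, one has $\lambda_{g^{-1}}=(\mathrm{ev}_g\ot\id)\circ\Delta$, $\rho_g=(\id\ot\mathrm{ev}_g)\circ\Delta$, and ${\rm Ad}(g)\circ\lambda_{g^{-1}}=\rho_g\circ\lambda_g\circ\lambda_{g^{-1}}=\rho_g$. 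Now by Lemma~\ref{deltaalgmap}, $\Delta$ is an algebra embedding of ${}_J\mathcal{O}(G)_J$ into ${}_J\mathcal{O}(G)\ot\mathcal{O}(G)_J$ and of ${}_{J^g}\mathcal{O}(G)_J$ into ${}_{J^g}\mathcal{O}(G)\ot\mathcal{O}(G)_J$, and $\Delta\circ\lambda_{g^{-1}}=(\lambda_{g^{-1}}\ot\id)\circ\Delta$ says exactly that $\lambda_{g^{-1}}$, the map $\lambda_{g^{-1}}\ot\id$, and these two embeddings form a commuting square. Since the lower embedding is injective and $\id$ is an algebra map of $\mathcal{O}(G)_J$, it suffices to prove that $\lambda_{g^{-1}}\colon{}_J\mathcal{O}(G)\to{}_{J^g}\mathcal{O}(G)$ is an algebra homomorphism, for the one-sided products of the form (\ref{multjinv}). (Alternatively one may skip this reduction and compute directly in ${}_J\mathcal{O}(G)_J$, at the cost of carrying the inert factor $J(a_3,b_3)$ of (\ref{nm}) along.)

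For the one-sided check I would expand both sides of ${}_{J^g}m(\lambda_{g^{-1}}a\ot\lambda_{g^{-1}}b)=\lambda_{g^{-1}}({}_Jm(a\ot b))$ for $a,b\in\mathcal{O}(G)$. The right-hand side equals $\sum J^{-1}(a_1,b_1)\,\lambda_{g^{-1}}(a_2)\lambda_{g^{-1}}(b_2)$ (using that $\lambda_{g^{-1}}$ is an algebra map), and expanding each $\lambda_{g^{-1}}$ as $(\mathrm{ev}_g\ot\id)\Delta$ introduces an evaluation $\mathrm{ev}_g$ on an inner Sweedler leg of $a$ and of $b$. On the left-hand side, $\Delta^{(2)}(\lambda_{g^{-1}}a)=\sum\lambda_{g^{-1}}(a_1)\ot a_2\ot a_3$, so it equals $\sum(J^g)^{-1}(\lambda_{g^{-1}}a_1,\lambda_{g^{-1}}b_1)\,a_2b_2=\sum J^{-1}(\rho_g a_1,\rho_g b_1)\,a_2b_2$ by ${\rm Ad}(g)\circ\lambda_{g^{-1}}=\rho_g$, and expanding $\rho_g$ as $(\id\ot\mathrm{ev}_g)\Delta$ again introduces an evaluation $\mathrm{ev}_g$ on an inner Sweedler leg of $a$ and of $b$. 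A single application of coassociativity then identifies both sides with $\sum\mathrm{ev}_g(a_2)\mathrm{ev}_g(b_2)\,J^{-1}(a_1,b_1)\,a_3b_3$ (Sweedler indices from $(\Delta\ot\id)\circ\Delta$). Together with the bijectivity of $\lambda_{g^{-1}}$ this yields the asserted algebra isomorphism.

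I do not expect a genuine obstacle: the argument is entirely bookkeeping. The one place that needs care is the last matching — checking that the $\mathrm{ev}_g$ produced by $\lambda_{g^{-1}}$ acting on the ``middle'' factors $a_2b_2$ lands, after reassociating, on the same Sweedler leg as the $\mathrm{ev}_g$ produced by ${\rm Ad}(g)\circ\lambda_{g^{-1}}=\rho_g$ acting on the ``left'' factors $a_1,b_1$. It is worth noting that the subscript $g^{-1}$ in $\lambda_{g^{-1}}$ is exactly what makes ${\rm Ad}(g)\circ\lambda_{g^{-1}}=\rho_g$, so the computation is insensitive to whether left translation is taken to be $f\mapsto f(g\,\cdot\,)$ or $f\mapsto f(g^{-1}\,\cdot\,)$.
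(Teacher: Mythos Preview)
Your proposal is correct; the paper's own proof is simply the single word ``Straightforward,'' so your Sweedler computation is exactly the kind of verification the author is leaving to the reader. The reduction via Lemma~\ref{deltaalgmap} to the one-sided algebras ${}_J\mathcal{O}(G)\to{}_{J^g}\mathcal{O}(G)$ is a tidy organizational choice but not essential---the direct two-sided check (carrying the inert $J(a_3,b_3)$ factor, as you note parenthetically) is equally short and is presumably what the author had in mind.
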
 

\begin{proof}
Straightforward.
\end{proof}

\section{The quotient algebras ${}_J\mathcal{O}(Z_g)_J$}\label{algstrquot} 

Retain the notation of Section \ref{algstrhopf}. 
Every double coset $Z=HgH$ in $H\backslash G\slash H$ is an orbit of the left action of the unipotent algebraic group $H\times H$ on $G$, given by $(a,b)\cdot g:=agb^{-1}$. Hence $Z$ is an irreducible closed subset of $G$ by the theorem of Kostant and Rosenlicht, and it has dimension $2\dim(H)-\dim(H\cap gHg^{-1})$ ($g\in Z$). 

Let $\mathcal{I}(Z)\subset \mathcal{O}(G)$ be the defining ideal of the double coset $Z$. Since $Z$ is irreducible, $\mathcal{I}(Z)$ is a prime ideal. Clearly, $\bigcap_Z \mathcal{I}(Z)=0$.

Now fix a double coset $Z_g=HgH$. Set $H_g:=H\cap gHg^{-1}$, and consider the embedding $$\theta:H_g\to H\times H,\;\;a\mapsto (a,g^{-1}ag),$$ of $H_g$ as a closed subgroup of $H\times H$. The subgroup $\theta(H_g)$ acts on $H\times H$ from the right via
$(x,y)\theta(a)= (xa,g^{-1}a^{-1}gy)$, $x,y\in H$ and $a\in H_g$. Let $\overline{(x,y)}$ denote the orbit of $(x,y)$ under this action. Then we have an isomorphism of affine varieties
$$(H\times H)/\theta(H_g)\xrightarrow{\cong} Z_g,\,\,\,\overline{(x,y)}\mapsto xgy.$$ 

The above right action induces a left action of $\theta(H_g)$ on $\mathcal{O}(H)^{\ot 2}$, given by $(\theta(a)(\alpha\ot \beta)(x,y)=\alpha(xa)\beta(g^{-1}a^{-1}gy)$, where $x,y\in H$ and $a\in H_g$. In other words, the action of $\theta(a)$ is via $\rho_a\ot \lambda_{g^{-1}ag}$, where $\lambda,\rho$ are the left, right regular actions of $G$ on $\mathcal{O}(G)$. Let $(\mathcal{O}(H)\ot \mathcal{O}(H))^{\theta(H_g)}$ denote the subalgebra of invariants under this action. Then we have an algebra isomorphism
$$\mathcal{O}(Z_g)\xrightarrow{\cong} (\mathcal{O}(H)\ot \mathcal{O}(H))^{\theta(H_g)}.$$
Equivalently, the surjective algebra homomorphism 
$$m_g^*:=(\iota^*\ot \iota^*)({\rm id}\ot \lambda_{g^{-1}})\Delta:\mathcal{O}(G)\twoheadrightarrow (\mathcal{O}(H)\ot \mathcal{O}(H))^{\theta(H_g)}$$
has kernel $\mathcal{I}(Z_g)$.

\begin{proposition}\label{lem2}
The map $m_g^*$ determines a surjective algebra homomorphism 
\begin{equation*}\label{twisteddoublecosetalg}
m_g^*:{}_J\mathcal{O}(G)_{J}\twoheadrightarrow (_{J} \mathcal{O}(H)\ot \mathcal{O}(H)_J)^{\theta(H_g)}.
\end{equation*}
In particular, $\mathcal{I}(Z_g)$ is a two sided ideal in ${}_J\mathcal{O}(G)_{J}$.
\end{proposition}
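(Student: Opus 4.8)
The plan is to realise $m_g^*$ as a composite of algebra homomorphisms for the twisted products, using only maps already available in Section~\ref{prelim}. Factor $m_g^*=(\iota^*\ot\iota^*)\circ({\rm id}\ot\lambda_{g^{-1}})\circ\Delta$. By Lemma~\ref{deltaalgmap} (with $K=J$), $\Delta$ is an algebra homomorphism ${}_J\mathcal{O}(G)_{J}\to {}_J\mathcal{O}(G)\ot\mathcal{O}(G)_J$. For the middle factor, the key point is that left translation $\lambda_h$ on $\mathcal{O}(G)$ is a morphism of left comodules, so in Sweedler notation $(\lambda_h a)_1=\lambda_h(a_1)$ and $(\lambda_h a)_2=a_2$, while $\lambda_h$ is an algebra automorphism of the commutative algebra $\mathcal{O}(G)$; substituting into (\ref{multj}) gives $m_J(\lambda_h a\ot\lambda_h b)=\lambda_h(m_J(a\ot b))$, so each $\lambda_h$, and in particular $\lambda_{g^{-1}}$, is an algebra automorphism of $\mathcal{O}(G)_J$, whence ${\rm id}\ot\lambda_{g^{-1}}$ is an algebra automorphism of ${}_J\mathcal{O}(G)\ot\mathcal{O}(G)_J$ (this is the one-sided analogue of Lemma~\ref{adg}). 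Symmetrically, right translation $\rho_a$ affects only the second Sweedler leg, so by (\ref{multjinv}) it is an algebra automorphism of ${}_J\mathcal{O}(G)$, and likewise of ${}_J\mathcal{O}(H)$.

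For the last factor, since $H$ is the support of $J$ the cocycle $J$ on $\mathcal{O}(G)$ is the pullback along $\iota^*\ot\iota^*$ of a Hopf $2$-cocycle on $\mathcal{O}(H)$ (still denoted $J$), and the same holds for its convolution inverse; as $\iota^*:\mathcal{O}(G)\to\mathcal{O}(H)$ is a surjective Hopf algebra map, it follows from (\ref{multjinv}) and (\ref{multj}) that $\iota^*:{}_J\mathcal{O}(G)\twoheadrightarrow{}_J\mathcal{O}(H)$ and $\iota^*:\mathcal{O}(G)_J\twoheadrightarrow\mathcal{O}(H)_J$ are surjective algebra homomorphisms. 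Composing the three factors shows that $m_g^*:{}_J\mathcal{O}(G)_{J}\to{}_J\mathcal{O}(H)\ot\mathcal{O}(H)_J$ is an algebra homomorphism. Its image, being the image of the same underlying linear map, is $(\mathcal{O}(H)\ot\mathcal{O}(H))^{\theta(H_g)}$ by the paragraph preceding the proposition; and since for $a\in H_g$ one has $g^{-1}ag\in H$ and the operator $\theta(a)=\rho_a\ot\lambda_{g^{-1}ag}$ is a tensor product of the algebra automorphisms identified above, the invariant subspace $({}_J\mathcal{O}(H)\ot\mathcal{O}(H)_J)^{\theta(H_g)}$ is a subalgebra, equal to that image. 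Finally, the linear kernel of $m_g^*$ is $\mathcal{I}(Z_g)$ (again by the preceding paragraph), and the kernel of an algebra homomorphism is a two-sided ideal, which gives the last assertion.

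All the verifications are short Sweedler-notation computations; the one thing that needs care is the bookkeeping of which leg each translation acts on — left translation on the first leg (compatible with $\mathcal{O}(G)_J$), right translation on the second (compatible with ${}_J\mathcal{O}(G)$) — since interchanging these would break multiplicativity for the one-sided twisted products. I also need to spell out, from \cite[Section~3.1]{G}, that ``$H$ is the support of $J$'' gives the factorisation $J=J\circ(\iota^*\ot\iota^*)$ that lends meaning to ${}_J\mathcal{O}(H)$ and $\mathcal{O}(H)_J$; granting this, I foresee no real obstacle.
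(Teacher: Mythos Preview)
Your argument is correct and follows essentially the same route as the paper: factor $m_g^*$ as $(\iota^*\ot\iota^*)\circ({\rm id}\ot\lambda_{g^{-1}})\circ\Delta$ and verify each factor is multiplicative for the appropriate twisted products. The only cosmetic difference is that you invoke Lemma~\ref{deltaalgmap} for $\Delta$ and then separately check ${\rm id}\ot\lambda_{g^{-1}}$, whereas the paper combines these into a single Sweedler computation for $({\rm id}\ot\lambda_{g^{-1}})\Delta$; one minor terminological slip is that $\lambda_h$ satisfying $(\lambda_h a)_1=\lambda_h(a_1)$, $(\lambda_h a)_2=a_2$ makes it a \emph{right} comodule map, not a left one, but the Sweedler identity you state and use is correct.
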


\begin{proof}
Since $\iota^*:{}_J\mathcal{O}(G)_{J}\twoheadrightarrow {}_J\mathcal{O}(H)_{J}$ is an algebra homomorphism, it suffices to show that $({\rm id}\ot \lambda_{g^{-1}})\Delta$ is an algebra homomorphism.
To this end, notice that we have $\Delta\circ\lambda_{g^{-1}}=(\lambda_{g^{-1}}\ot {\rm id})\circ\Delta$. Thus, using (\ref{nm})-(\ref{multjinv}), we get that for every $\alpha,\beta\in \mathcal{O}(G)$,
\begin{eqnarray*}
\lefteqn{({\rm id}\ot \lambda_{g^{-1}})\Delta(\alpha\beta)}\\
& = & ({\rm id}\ot \lambda_{g^{-1}})\Delta\left(\sum J^{-1}(\alpha_1,\beta_1)\alpha_2\beta_2J(\alpha_3,\beta_3)\right)\\
& = & \sum J^{-1}(\alpha_1,\beta_1)\alpha_2\beta_2\ot \lambda_{g^{-1}}(\alpha_3\beta_3)J(\alpha_4,\beta_4)\\
& = & \sum J^{-1}(\alpha_1,\beta_1)\alpha_2\beta_2\ot \lambda_{g^{-1}}(\alpha_3)\lambda_{g^{-1}}(\beta_3)J(\alpha_4,\beta_4)\\
& = & \sum (\alpha_1\ot \lambda_{g^{-1}}(\alpha_2))(\beta_1\ot \lambda_{g^{-1}}(\beta_2))\\
& = & ({\rm id}\ot \lambda_{g^{-1}})\Delta(\alpha)({\rm id}\ot \lambda_{g^{-1}})\Delta(\beta),
\end{eqnarray*}
as required.
\end{proof}

For $g\in G$, set ${}_J\mathcal{O}(Z_g)_{J}:={}_J\mathcal{O}(G)_{J}/\mathcal{I}(Z_g)$.

\begin{corollary}\label{lem3}
For every $g\in G$, the algebra ${}_J\mathcal{O}(Z_g)_{J}$ is an affine Noetherian domain of Gelfand-Kirillov dimension $2\dim(H)-\dim(H_g)$. In particular, $\mathcal{I}(Z_g)$ is a completely prime two sided ideal of ${}_J\mathcal{O}(G)_{J}$.
\end{corollary}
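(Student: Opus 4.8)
The plan is to produce $\mathcal{I}(Z_g)$ inside ${}_J\mathcal{O}(G)_J$ as the kernel of the surjective algebra map $m_g^*$ from Proposition \ref{lem2}, and then identify the target algebra well enough to read off the ring-theoretic properties. First I would observe that by Proposition \ref{lem2} we have an algebra isomorphism
$${}_J\mathcal{O}(Z_g)_{J}\cong ({}_J\mathcal{O}(H)\ot \mathcal{O}(H)_J)^{\theta(H_g)},$$
since the kernel of $m_g^*$ on ${}_J\mathcal{O}(G)_J$ is exactly $\mathcal{I}(Z_g)$ (it agrees with the kernel of $m_g^*$ on $\mathcal{O}(G)$ as sets, $\mathcal{I}(Z_g)$ being a two sided ideal by the Proposition). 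So the whole statement reduces to analyzing the invariant subalgebra of the tensor product of the two one sided twisted algebras.

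Next I would invoke Remark \ref{twisteduoa}: by \cite[Theorems 4.7, 5.1]{G}, ${}_J\mathcal{O}(H)\cong U^{-\omega}(\h^{\op})$ and $\mathcal{O}(H)_J\cong U^{\omega}(\h)$ are Weyl algebras, so ${}_J\mathcal{O}(H)\ot \mathcal{O}(H)_J\cong U^{(-\omega,\omega)}(\h^{\op}\oplus\h)$ is a Weyl algebra of Gelfand-Kirillov dimension $2\dim(H)$; in particular it is an affine Noetherian domain. The group $\theta(H_g)\cong H_g$ is a unipotent algebraic group acting on this algebra by algebra automorphisms (rationally), and the invariant subalgebra of a Noetherian domain under any group of automorphisms is again a domain, and is affine and Noetherian here because $\mathcal{O}(Z_g)\cong (\mathcal{O}(H)\ot\mathcal{O}(H))^{\theta(H_g)}$ is affine and the twisted algebra is a finite (indeed PBW-filtered) deformation of it — more precisely, ${}_J\mathcal{O}(Z_g)_J$ carries the coradical filtration of $\mathcal{O}(G)$ restricted to $\mathcal{O}(Z_g)$, whose associated graded is a quotient of $\gr\mathcal{O}(G)=\mathcal{O}(G)$, hence the classical affine Noetherian domain $\mathcal{O}(Z_g)$; affineness and Noetherianity then lift from the associated graded, and the domain property can be seen either the same way or directly from the isomorphism above since an invariant subalgebra of the domain $U^{(-\omega,\omega)}(\h^{\op}\oplus\h)$ has no zero divisors.

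For the Gelfand-Kirillov dimension I would argue on the associated graded: $\gr\,{}_J\mathcal{O}(Z_g)_J\cong \mathcal{O}(Z_g)$ as commutative algebras (the twisting only perturbs lower order terms, cf.\ Lemma \ref{lemmaunip}), and GK-dimension is preserved under passing to the associated graded of a filtration whose graded pieces are finite dimensional; hence $\mathrm{GKdim}\,{}_J\mathcal{O}(Z_g)_J = \mathrm{GKdim}\,\mathcal{O}(Z_g)=\dim Z_g = 2\dim(H)-\dim(H\cap gHg^{-1})=2\dim(H)-\dim(H_g)$, using the dimension count for the double coset recalled at the start of Section \ref{algstrquot}. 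Finally, "completely prime" is just the restatement that $\mathcal{I}(Z_g)$ is a two sided ideal (Proposition \ref{lem2}) with ${}_J\mathcal{O}(G)_J/\mathcal{I}(Z_g)={}_J\mathcal{O}(Z_g)_J$ a domain.

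I expect the main obstacle to be the careful justification that passing to the associated graded is legitimate — i.e.\ that the coradical filtration of $\mathcal{O}(G)$, which is multiplicative for the twisted product by Lemma \ref{lemmaunip}, descends to a filtration on the quotient ${}_J\mathcal{O}(Z_g)_J$ whose associated graded is genuinely isomorphic (as an algebra, not merely as a graded vector space) to $\mathcal{O}(Z_g)$, so that standard filtered–graded transfer results (affineness, Noetherianity, GK-dimension, and integral domain) apply. Everything else — the isomorphism with the $\theta(H_g)$-invariants, the Weyl algebra identification, and the dimension arithmetic — is then essentially bookkeeping built on Proposition \ref{lem2}, Remark \ref{twisteduoa}, and the Kostant–Rosenlicht dimension formula.
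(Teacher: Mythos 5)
Your proposal is correct and follows essentially the same route as the paper: identify ${}_J\mathcal{O}(Z_g)_J$ with $({}_J\mathcal{O}(H)\ot\mathcal{O}(H)_J)^{\theta(H_g)}$ via Proposition \ref{lem2}, get the domain (hence completely prime) property from the Weyl algebra identification of \cite[Theorem 4.7]{G}, and obtain affineness, Noetherianity and the Gelfand--Kirillov dimension from Corollary \ref{noethdomunipgr} together with the filtration whose underlying filtered vector space agrees with that of $\mathcal{O}(Z_g)$. Your extra filtered--graded justification of the GK-dimension count is fine (only the Hilbert function of the associated graded matters, and it coincides with that of $\mathcal{O}(Z_g)$), and is merely more explicit than the paper's one-line argument.
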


\begin{proof}
Since by \cite[Theorem 4.7]{G}, $_{J} \mathcal{O}(H)\ot \mathcal{O}(H)_J$ is a Weyl algebra, the claim follows from Corollary \ref{noethdomunipgr} and Proposition \ref{lem2}.
\end{proof}

\begin{remark}\label{2cosim2}
Let $K$ be a Hopf $2$-cocycle for $\mathcal{O}(G)$ with support $\widetilde H$. For every $g\in G$, let $Z_g:=\widetilde H gH$ be the $(\widetilde H,H)$-double coset of $g$, let $N_g:=\widetilde H\cap gHg^{-1}$, and let
$$d_g:=\frac{1}{2}\left(\dim(H)+\dim(\widetilde H)\right)-\dim(N_g).$$ (By \cite[Theorem 5.1]{G}, $\dim(H)$ and $\dim(\widetilde H)$ are even, so $d_g$ is an integer.) Then $\mathcal{I}(Z_g)$ is a completely prime two sided ideal of $_{K}{}\mathcal{O}(G)_J$, and $_{K}{}\mathcal{O}(Z_g)_{J}:=(_{K}{}\mathcal{O}(G)_J)/\mathcal{I}(Z_g)$ is an affine Noetherian domain with Gelfand-Kirillov dimension $\dim(Z_g)=\dim(H)+\dim(\widetilde H)-\dim(N_g)$.
\end{remark}

\begin{remark}
By Proposition \ref{lem2} and \cite[Theorem 4.7]{G}, if $H_g$ is trivial then ${}_J\mathcal{O}(Z_g)_{J}\cong \mathscr{W}_{\dim(H)}$ is a Weyl algebra.
\end{remark}

\begin{theorem}\label{algisomlemmanormal}
Assume $H$ is $g$-invariant, and let $\omega_g:=\omega^g - \omega$. 
Then we have an algebra isomorphism
$${}_J\mathcal{O}(Z_g)_{J}\cong {}_{J^g}\mathcal{O}(H)_J\cong U^{\omega_g}(\h).$$
In particular, a maximal Weyl subalgebra of ${}_J\mathcal{O}(Z_g)_{J}$ has dimension $r$, where $r\in 2\mathbb{Z}^{\ge 0}$ is the rank of $\omega_g$ restricted to $\mathfrak{h}$.
\end{theorem}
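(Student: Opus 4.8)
The plan is to reduce the statement to the minimal case already handled in Theorem~\ref{noethdomminunip} via the translation isomorphism of Lemma~\ref{adg}. First I would observe that when $H$ is $g$-invariant, i.e.\ $gHg^{-1}=H$, the double coset $Z_g=HgH$ coincides with $Hg=gH$, so the variety $Z_g$ is a single right coset of $H$ and $\mathcal{O}(Z_g)\cong \mathcal{O}(H)$ canonically. Under this identification, $\lambda_{g^{-1}}$ carries ${}_J\mathcal{O}(G)_J$ onto ${}_{J^g}\mathcal{O}(G)_J$ by Lemma~\ref{adg}, and it should send the ideal $\mathcal{I}(Z_g)$ to $\mathcal{I}(H)$; hence it descends to an isomorphism ${}_J\mathcal{O}(Z_g)_J \cong {}_{J^g}\mathcal{O}(G)_J/\mathcal{I}(H) = {}_{J^g}\mathcal{O}(H)_J$. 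Here I use that $H$ being $g$-invariant means $J^g=J\circ(\mathrm{Ad}g\ot \mathrm{Ad}g)$ still has support inside $H$ (the support of $J^g$ is $g^{-1}Hg=H$), so $J^g$ restricts to a Hopf $2$-cocycle for $H$ and ${}_{J^g}\mathcal{O}(H)_J$ makes sense.

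Next I would apply Theorem~\ref{noethdomminunip} to the Hopf $2$-cocycle $J^g$ for $H$: since $J^g$ is minimal on $H$, the $R$-form induces an algebra isomorphism ${}_{J^g}\mathcal{O}(H)_J \cong U^{\omega'}(\h)$ for the symplectic structure $\omega'$ on $\h$ attached to $J^g$. The content then is the identification $\omega' = \omega_g = \omega^g - \omega$, where $\omega^g$ denotes the form obtained by transporting $\omega$ through $\mathrm{Ad}g$. To pin this down I would use Lemma~\ref{prim1} together with the computation in Lemma~\ref{lemmaunip}(5)--(6): the commutator $[y_i,y_j]$ in the twisted algebra is controlled by $Q=J-J_{21}$ evaluated on primitives, and by Lemma~\ref{prim1} this is exactly $R^J$ on primitives. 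Replacing $J$ by $J^g$ replaces $R^J$ by $R^{J^g} = (J^g)^{-1}_{21}*R*J^g$, and on primitives $R^{J^g}(p,a) = (J^g-J^g_{21})(p,a)$. Since $J^g = J\circ(\mathrm{Ad}g\ot\mathrm{Ad}g)$ and on $\mathcal{O}(H)$ the minimal cotriangular structure recovers the bracket on $\h$, one reads off that the resulting $2$-cocycle on $\h$ is $\omega$ conjugated by the automorphism of $\h$ induced by $\mathrm{Ad}g$ --- but to compare with the untwisted bracket one must subtract the original $\omega$, giving $\omega_g=\omega^g-\omega$. The final assertion about maximal Weyl subalgebras then follows from the standard structure theory of $U^\eta(\h)$: a non-degenerate/degenerate analysis of $\eta=\omega_g$ shows $U^{\omega_g}(\h)\cong \mathscr{W}_{r/2}\ot U(\ker\omega_g)$ up to the radical, so the maximal Weyl subalgebra has dimension equal to $\rank(\omega_g)=r\in 2\mathbb{Z}^{\ge 0}$.

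The main obstacle I anticipate is the bookkeeping in the middle step: verifying carefully that $\lambda_{g^{-1}}$ really does carry $\mathcal{I}(Z_g)$ to $\mathcal{I}(H)$ as two-sided ideals of the twisted algebras (not merely as ideals of the commutative $\mathcal{O}(G)$), and especially getting the signs and the $\mathrm{Ad}g$-vs-$\mathrm{Ad}g^{-1}$ conventions right in the identification $\omega'=\omega^g-\omega$. The subtraction of $\omega$ is the conceptually delicate point: it reflects that ${}_{J^g}\mathcal{O}(H)_J$ is a \emph{two-sided} twist by $J$ on one side and $J^g$ on the other, so the effective $2$-cocycle seen by the commutators is the \emph{difference} of the two symplectic forms, and one needs Lemma~\ref{prim1} in its full $R^J(p,a)=(J-J_{21})(p,a)=(J_{21}^{-1}-J^{-1})(p,a)$ form to see that the two natural-looking answers agree and to fix the sign. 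Once that identification is in hand, everything else is either a direct citation (Theorem~\ref{noethdomminunip}, Lemma~\ref{adg}) or routine structure theory of twisted enveloping algebras.
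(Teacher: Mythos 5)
Your proposal follows essentially the same route as the paper: reduce via $\lambda_{g^{-1}}$ and Lemma~\ref{adg} (using that $g$-invariance makes $Z_g=gH$, so $\mathcal{I}(Z_g)$ is carried to $\mathcal{I}(H)$) to the mixed twist ${}_{J^g}\mathcal{O}(H)_J$, then invoke Theorem~\ref{noethdomminunip} and identify the effective symplectic $2$-cocycle as the difference $\omega^g-\omega$. Your discussion of why the two-sided twist by distinct cocycles produces the difference of the forms is in fact more explicit than the paper's own (rather terse) final step, and the rest matches.
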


\begin{proof}
Since $H$ is $g$-invariant if and only if ${\rm Ad}g$ defines a Hopf algebra isomorphism $\mathcal{O}(H)\xrightarrow{\cong}\mathcal{O}(H)$, $J^g$ is a minimal Hopf $2$-cocycle for $H$. Clearly, $J^g$ corresponds to the symplectic $2$-cocycle $\omega^g$ of $\h$.

Now since $\lambda_{g^{-1}}$ maps $\mathcal{I}(gH)$ isomorphically onto $\mathcal{I}(H)$, it follows from Lemma \ref{adg} that $\lambda_{g^{-1}}$ induces an algebra isomorphism
$$\lambda_{g^{-1}}:{}_J\mathcal{O}(Z_g)_{J}\cong {}_J\mathcal{O}(G)_{J}\slash \mathcal{I}(gH)\xrightarrow{\cong} {}_{J^g}\mathcal{O}(G)_J\slash \mathcal{I}(H).$$

Finally, by Theorem \ref{noethdomminunip}, we have algebra isomorphisms
$${}_{J^g}\mathcal{O}(G)_J\slash \mathcal{I}(H)\cong {}_{J^g}\left(\mathcal{O}(G)\slash \mathcal{I}(H)\right)_J\cong {}_{J^g}\mathcal{O}(H)_J\cong U^{\omega_g}(\h),$$
as desired.
\end{proof}

Next we consider the case where $H$ is not $g$-invariant, i.e., the case $d:=\dim(H/H_g)>0$.

\begin{theorem}\label{noethdomunip}
If $H$ is not $g$-invariant then the algebra $\mathcal{O}(Z_g)^{J}$ contains a Weyl subalgebra.
\end{theorem}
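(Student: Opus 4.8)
The plan is to reduce to the previously established structural results by replacing $J$ with the conjugate cocycle $J^g$ and then exploiting the one-sided twisted function algebras from Section~\ref{osta} together with Theorem~\ref{paresl}. First I would observe, exactly as in the proof of Theorem~\ref{algisomlemmanormal}, that $\lambda_{g^{-1}}$ induces an algebra isomorphism $\mathcal{O}(Z_g)^J \cong {}_{J^g}\mathcal{O}(G)_J / \mathcal{I}(H)$, and that the right-hand side is a quotient of the comodule algebra ${}_{J^g}\mathcal{O}(G)$ by the Hopf ideal $\mathcal{I}(H)$; by the biGalois/quotient formalism this is ${}_{J^g}\mathcal{O}(H)$. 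So the claim becomes: if $H$ is not $g$-invariant, then the Weyl-type algebra ${}_{J^g}\mathcal{O}(H)$ contains a Weyl subalgebra of positive dimension — equivalently, the support $L\subset H$ of the cocycle $J^g$ (restricted to $H$) is nontrivial. Actually I must be slightly more careful: $\mathrm{Ad}\,g$ need not preserve $\mathcal{O}(H)$ when $H$ is not $g$-invariant, so the right framework is the $(\widetilde H,H)$-double coset picture of Remark~\ref{2cosim2} with $K=J^g$, $\widetilde H = g^{-1}Hg$ — but I will instead stay with $m_g^*$ of Proposition~\ref{lem2}, which already gives a surjection ${}_J\mathcal{O}(G)_J \twoheadrightarrow ({}_J\mathcal{O}(H)\ot \mathcal{O}(H)_J)^{\theta(H_g)}$ with kernel exactly $\mathcal{I}(Z_g)$, hence an isomorphism $\mathcal{O}(Z_g)^J \cong ({}_J\mathcal{O}(H)\ot \mathcal{O}(H)_J)^{\theta(H_g)}$.

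Given that isomorphism, the task is to show the fixed-point subalgebra $({}_J\mathcal{O}(H)\ot \mathcal{O}(H)_J)^{\theta(H_g)}$ contains a Weyl subalgebra whenever $d=\dim(H/H_g)>0$. Here ${}_J\mathcal{O}(H)\ot \mathcal{O}(H)_J \cong \mathscr{W}_{\dim(H)}$ is a Weyl algebra (by \cite[Theorem 4.7]{G} and Remark~\ref{twisteduoa}, it is $U^{(-\omega,\omega)}(\h^{\mathrm{op}}\oplus\h)$), and $\theta(H_g)$ is a unipotent group of dimension $\dim(H)-d$ acting by algebra automorphisms. The key step is a dimension count: the invariant subalgebra has Gelfand-Kirillov dimension $\dim(Z_g)=2\dim(H)-\dim(H_g)$, which is \emph{odd or at least not forced to be $2\dim(H)$}; more usefully, I want to identify inside it a sub-coideal-algebra coming from a quotient group and apply Theorem~\ref{paresl}. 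Concretely: the subalgebra $\mathcal{O}(H\backslash Z_g)\subset \mathcal{O}(Z_g)^J$ — invariants under the first $H$-action alone — maps, via the second projection, onto ${}_{J'}\mathcal{O}(H/N)$-type algebras where $N$ is an appropriate normal subgroup related to $H_g$; then Theorem~\ref{paresl}(2) applies once one checks $2\dim(N)<\dim(H)$, giving the Weyl subalgebra.

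The main obstacle is making the last reduction precise: I need to exhibit a closed normal subgroup $N\subset H$ with $\dim N$ small enough (say $N$ contained in $H_g$ or its normal closure) such that a suitable one-sided twisted quotient ${}_J\mathcal{O}(H/N)$ embeds as a subalgebra of $\mathcal{O}(Z_g)^J$, and then verify the hypothesis $2\dim(N)<\dim(H)$. The cleanest route is probably to avoid Theorem~\ref{paresl} entirely and argue directly via $R^J$: the subalgebra $\mathcal{O}(H\backslash Z_g \slash H) = \mathcal{O}(H\backslash G\slash H)/\mathcal{I}(Z_g)\cap(\cdots)$ is central (Lemma~\ref{centralsubalg}), and $Z_g$ has dimension $2\dim H - \dim H_g > \dim H$ when $d>0$; since ${}_J\mathcal{O}(H)_J$ has GK-dimension $\dim H$, any ``isotropic'' structure is impossible, and the failure of isotropy of the relevant coideal subalgebra under $R^J$ — precisely the argument of Theorem~\ref{paresl}(2) — forces a nondegenerate skew pairing, hence a Weyl pair $[p,q]=1$. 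I expect the proof to run: (i) reduce via $m_g^*$ to the fixed-point subalgebra of the Weyl algebra; (ii) locate a coideal subalgebra $\mathcal{O}(V)^J$ inside it with $\dim V > \tfrac12\dim(\text{ambient Weyl algebra})$, whose associated form $R^J$ cannot vanish; (iii) extract the Weyl pair from a pair of primitive-type elements using Lemma~\ref{prim1}. The delicate point is step (ii) — pinning down which subvariety $V$ of $Z_g$ to use and confirming the dimension inequality from $d>0$.
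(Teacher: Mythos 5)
There is a genuine gap. Your reduction via $m_g^*$ to the fixed-point algebra $({}_J\mathcal{O}(H)\ot\mathcal{O}(H)_J)^{\theta(H_g)}$ of the Weyl algebra $\mathscr{W}_{\dim(H)}$ is a correct reformulation, but neither of the two routes you sketch for the remaining (and essential) step actually works. The route through Theorem \ref{paresl}(2) requires a closed \emph{normal} subgroup $N$ with $2\dim(N)<\dim(H)$; but $H_g$ is generally not normal in $H$, and in the generic case $d=1$ its normal closure $N$ has $\dim(N)\ge \dim(H)-1$, so $2\dim(N)<\dim(H)$ would force $\dim(H)<2$, which is impossible ($\dim(H)$ is even and positive). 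So that hypothesis fails precisely in the cases you most need. The ``cleaner'' route via non-degeneracy of $R^J$ asserts that a non-isotropic coideal subalgebra ``forces a nondegenerate skew pairing, hence a Weyl pair $[p,q]=1$,'' but this inference is exactly what is hard: a commutator in ${}_J\mathcal{O}(G)_{J}$ is generally an element of $A^+$ with higher-order terms (Lemma \ref{lemmaunip}(4)--(5)), not a scalar, and arranging for it to be a nonzero constant \emph{modulo} $\mathcal{I}(Z_g)$ requires choosing very particular elements. You acknowledge this as ``the delicate point'' but do not resolve it.

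For comparison, the paper's proof is an induction on $\dim(G)$ along a central extension $1\to C\to G\to\overline G\to 1$ with $C\cong\mathbb{G}_a$, with base case $\dim(G)=4$ (Example \ref{heisen1}). The inductive step either transports a Weyl pair from $\mathcal{O}(Z_{\bar g})^J$ (when $\bar d>0$, using Lemma \ref{tendecomp} and an averaging over the $X$-grading in case $C\subset H$), or, when the reduction bottoms out ($\bar d=0$), constructs an explicit pair: in the case $H\cap C=\{1\}$ it produces $W$ and an $X$ with $R^J(p,X)=1$ and $R^J(p,X_i)=0$ on the components of $q(X)$, so that $[W,X]\equiv 1$ on $HgH$ (Proposition \ref{weyl1}); in the case $C\subset H$ with $A$ not $\bar g$-invariant it produces the element $V$ of Lemma \ref{vexists} with $q(V)=X\ot Y$ and shows $[W,V]\equiv R^J(W,p)(Y-X)(g)\ne 0$ (Proposition \ref{centralzw}). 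This explicit construction, which your proposal replaces with an unproven existence claim, is the substance of the theorem; without it (or a genuinely different mechanism for producing a pair with constant nonzero commutator on $Z_g$), the proof is incomplete.
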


\begin{proof} 
The proof is by induction on the dimension $m$ of $G$, $m\ge 4$.
 
Assume $m=4$. Since $G$ is not commutative, $\dim(H)=2$. Since $H$ is properly contained in a proper normal subgroup $N$ of $G$, it follows that $N$ is the Heisenberg group of dimension $3$. Thus, the induction base is given in Example \ref{heisen1}.

Assume $m\ge 5$. Since $G$ is unipotent, we have a central extension 
$$1\to C\xrightarrow{\iota}G\xrightarrow{\pi}\overline G\to 1,$$ where $C\cong \mathbb{G}_a$
as in \ref{1dce}. Let $\mathcal{O}(C)=\mathbb{C}[z]$, $W\in \mathcal{O}(G)$ and $L\subset \overline H\subset \overline G$ be as in \ref{1dce}. Set $\bar g:=\pi(g)$ and $\bar d:=d_{\bar g}$. 

There are two possible cases: Either $H\cap C$ is trivial or $C\subset H$.

\subsection{$H\cap C=\{1\}$}\label{I} In this case,  
we are in the situation of \ref{i}. 
Consider the regular surjective map $\pi:HgH\twoheadrightarrow L\bar g L$. We have $\pi^{-1}(\bar g)=gC\cap HgH$, and $\dim(HgH)-\dim(L\bar g L)=\dim(\pi^{-1}(\bar g))$.

\begin{lemma}\label{fiber}
Exactly one of the following holds:
\begin{enumerate}
\item
$\pi^{-1}(\bar g)=\{g\}$. Equivalently, $\dim(HgH)=\dim(L\bar g L)$.
\item
$\pi^{-1}(\bar g)=gC$. Equivalently, $\dim(HgH)=\dim(L\bar g L)+1$.
\end{enumerate} 
\end{lemma}

\begin{proof}
Follows since $gC\cap HgH\subset gC$ and $\dim(gC)=1$.
\end{proof}

If Lemma \ref{fiber}(1) holds, then $\mathcal{O}(Z_g)^{J}\cong \mathcal{O}(Z_{\bar g})^{J}$ and $d=\bar d$, so the claim follows by induction.
 
Otherwise, Lemma \ref{fiber}(2) holds. Then $d=\bar d+1$. If ${\bar d}>0$ then $\mathcal{O}(Z_{\bar g})^{J}$ contains a Weyl subalgebra by the induction assumption, and since $\mathcal{O}(Z_{\bar g})^{J}$ is a subalgebra of $\mathcal{O}(Z_g)^{J}$, so does $\mathcal{O}(Z_g)^{J}$.

Otherwise ${\bar d}=0$. So, $d=1$. Thus $L$ is $\bar g-$invariant, $H_g$ is normal in $H$, $HgH=CgH$, and $\dim(L)=\dim(H_g)+1$. 

\begin{lemma}\label{wonhgh}
The following hold:
\begin{enumerate}
\item
$W$ is not constant on $HgH=CgH$.
\item  
$\rho_g(W)-W-W(g)$ vanishes on $C$ and $H_g$, but not on $H$.
\end{enumerate}
\end{lemma}

\begin{proof}
Since $W=z$ on $C$, each $W'$ vanishes on $C$. Hence by Lemma \ref{choosey},  
$W(cgh)=W(cg)=W(c)+W(g)+\sum W'(c)W''(g)=c+W(g)$ 
for every $c\in C$ and $h\in H$, which implies (1) and the first part of (2). Since $\rho_g(W)(ghg^{-1})=W(gh)=W(g)$ for every $ghg^{-1}$ in $H_g$, and $W$ vanishes on $H$, the second part of (2) follows too.
\end{proof}

For $l\in L$, let $\tilde{l}\in L$ be the unique element such that $l=\bar g \tilde{l} {\bar g}^{-1}$. Let $h,\tilde{h}\in H$ be the unique elements such that $l=\pi(h)$ and $\tilde{l}=\pi(\tilde{h})$. Let $\tau(l):=g\tilde{h}g^{-1}h^{-1}$. Then $\tau(l)\in C$.

\begin{lemma}\label{splexse}
$\tau:L\to C$ is a group homomorphism, and we have a splitting exact sequence of algebraic groups
$1\to H_g\xrightarrow{\pi}L\xrightarrow{\tau} C\to 1$.
\end{lemma}

\begin{proof}
Follows from Lemma \ref{codim1split} since $H_g$ has codimension $1$ in $H$.
\end{proof}
  
By Lemma \ref{splexse}, we have an injective homomorphism of Hopf algebras $\tau^*:\mathcal{O}(C)\xrightarrow{1:1}\mathcal{O}(L)$. Let $p:=\tau^*(z)$. Then $p$ is a nonzero primitive element in $\mathcal{O}(L)$ that generates the defining ideal of $\pi(H_g)$ in $\mathcal{O}(L)$.

\begin{lemma}\label{zphow}
We may assume $\iota_L^*(\rho_g(W)-W-W(g))=p$. 
\end{lemma}

\begin{proof}
Consider the surjective Hopf algebra map $f:\mathcal{O}(G)\twoheadrightarrow \mathcal{O}(CH)$ induced by the inclusion of groups $CH\subset G$. Since $W$ vanishes on $H$, and restricts to $z$ on $C$, it follows that $f(W)=z$, and $f(W')$ is a primitive element in $\mathcal{O}(CH)$ that vanishes on $C$ for every $W'$. Thus, $\iota_L^*(\rho_g(W)-W-W(g))=\sum \iota_L^*(W')W''(g)$ is a nonzero primitive element in $\mathcal{O}(L)$ by Lemma \ref{wonhgh}, and since it vanishes on the defining ideal of $\pi(H_g)$ in $\mathcal{O}(L)$, it must be proportional to $p$.  
\end{proof}

Since $R^J$ is non-degenerate on $\mathcal{O}(L)^J$, there exists $X\in \mathcal{O}(\overline{G})$ such that $R^J(p,X)=1$. Choose such $X$ with minimal possible degree with respect to the coradical filtration, and write 
$q(X)=\sum_i X_i\ot Y_i$
in the shortest possible way. Then $R^J(p,X_i)=0$ for every $i$.

\begin{proposition}\label{weyl1}
We have $[W,X]\equiv 1$ on $HgH$. 
\end{proposition}

\begin{proof}   
Since each $W''$ is in $\mathcal{O}(\overline{G}\slash L)$, and $R^J(p,X_i)=0$ for every $i$, we have by Lemma \ref{choosey}, 
\begin{eqnarray*}
\lefteqn{[W,X](\bar g l)=\sum S(W',X_1)W''(\bar g l)X_2(\bar g l)=\sum S(W',X_1)W''(g)X_2(\bar g l)}\\
& = &\sum S(W',X)W''(g)+\sum_i S(W',X_i)W''(g)Y_i(\bar g l)\\
& = & S(\rho_g(W)-W-W(g),X)+\sum_i S(\rho_g(W)-W-W(g),X_i)Y_i(\bar g l)\\
& = & S(p,X)+\sum_i S(p,X_i)Y_i(\bar g l)=R^J(p,X)+\sum_i R^J(p,X_i)Y_i(\bar g l)\\
& = & R^J(p,X)
\end{eqnarray*}
for every $l\in L$, where the equality before last follows from Lemma \ref{prim1}. Thus $[W,X]\equiv 1$ on $HgH$, as claimed.
\end{proof}
 
\subsection{$C\subset H$}\label{II} In this case, we are in the situation of \ref{ii}, and $W$ does not vanish on $H$. 

\subsubsection{$A$ is $\bar g$-invariant}\label{II.1} In this case, we have $\overline{H}\bar g \overline{H}=AL\bar g L=L\bar g LA$, and $\bar d=d>0$.

\begin{proposition}\label{finally}
The algebra ${}_J\mathcal{O}(Z_g)_{J}$ contains a Weyl subalgebra.
\end{proposition}

\begin{proof}
Since ${}_J\mathcal{O}(\overline{H}\bar g \overline{H})_J$ is a subalgebra in ${}_J\mathcal{O}(Z_g)_{J}$ via $\pi^*$, it suffices to show that ${}_J\mathcal{O}(\overline{H}\bar g \overline{H})_J$ contains a Weyl subalgebra.

Now since $\bar d>0$, the algebra    
${}_J\mathcal{O}(Z_{{\bar g}})_J$ contains a Weyl subalgebra by the induction assumption. Let $\alpha,\beta$ in ${}_J\mathcal{O}(\overline{G})_{J}$ such that $[\alpha,\beta]\equiv 1$ on $L\bar g L$. By Lemma \ref{tendecomp}, we can write $\alpha=\sum_i\alpha_iX^i$ and $\beta=\sum_i\beta_iX^i$, where $\alpha_i,\beta_i$ are in $B$, and $[\alpha,\beta]=\sum_{i,j}[\alpha_i,\beta_j]X^{i+j}$.

Since $X$ is $L$-biinvariant, we have $X\equiv X(\bar g)$ on $L\bar g L$. 
If $X(\bar g)=0$, then $X^i(\bar g)=0$ for all $i\ge 1$, hence $[\alpha_0,\beta_0]=[\alpha,\beta]\equiv 1$ on $L\bar g L$. But $[\alpha_0,\beta_0]$ is in $B$ (as $\alpha_0,\beta_0$ are), so $[\alpha_0,\beta_0]$ is $A$-invariant. Thus, $[\alpha_0,\beta_0]\equiv 1$ on $\overline{H}\bar g \overline{H}=AL\bar g L=L\bar g LA$, and we are done.

Otherwise, $X(\bar g)\ne 0$. We may assume $X(\bar g)=1$. Then we have $\sum_{i,j}[\alpha_i,\beta_j]=[\alpha,\beta]\equiv 1$ on $L\bar g L$. 
Set $\tilde\alpha:=\sum_i\alpha_i$ and $\tilde\beta:=\sum_i\beta_i$. Then $\tilde\alpha,\tilde\beta$ are in $B$, and we have $[\tilde\alpha,\tilde\beta]\equiv 1$ on $L\bar g L$, hence on $\overline{H}\bar g \overline{H}$, as above.
\end{proof}

\subsubsection{$A$ is not $\bar g$-invariant}\label{II.2} In this case, $A\bar g A$ is $2$-dimensional and $d=\bar d+1$. Set $\varphi:=j^*\circ\iota_L^*:\mathcal{O}(\overline{G})\twoheadrightarrow \mathcal{O}(A)$.

\begin{lemma}\label{vexists}
There exists $V\in \mathcal{O}(\overline{G})^+$ such that 
the following hold:
\begin{enumerate}
\item
$\varphi(\rho_{{\bar g}}(V))$ and $\varphi(\lambda_{{\bar g}^{-1}}(V))$ are algebraically independent, and $V(\bar g)=0$. In particular, $V$ is not primitive.
\item
$\varphi(q(V))=\varphi(q(V))_{21}\ne 0$. In particular,    
$\varphi(V)$ is not primitive.
\end{enumerate}
\end{lemma}

\begin{proof}
(1) Since $A$ is not $\bar g$-invariant, the first claim follows, and replacing $V$ by $V-V(\bar g)$ if necessary, we may assume $V(\bar g)=0$. Since either $\varphi(\rho_{{\bar g}}(V))$ or $\varphi(\lambda_{{\bar g}^{-1}}(V))$ is not primitive, $V$ is not primitive.

(2) The first claim follows from $\Delta(\varphi(V))=\Delta^{{\rm op}}(\varphi(V))$ (as $A$ is commutative). If $\varphi(q(V))=\varphi(q(V))_{21}=0$, then $\varphi(\rho_{{\bar g}}(V))=\varphi(V)$ and $\varphi(\lambda_{{\bar g}^{-1}}(V))=\varphi(V)$, which is a contradiction. Thus, $\varphi(V)$ is not primitive, as claimed.
\end{proof}

Pick $V\in \mathcal{O}(\overline{G})^+$ as in Lemma \ref{vexists}, with minimal possible degree $\ell\ge 2$ with respect to the coradical filtration. By Lemma \ref{variousgrps}, we may assume $V\in \mathcal{O}(L\backslash\overline{G}\slash L)^+$.
Write $q(V)=\sum_i X_i\ot Y_i$. Then for every $i$, we have $X_i\in \mathcal{O}(L\backslash\overline{G})^+$ and $Y_i\in \mathcal{O}(\overline{G}\slash L)^+$.

\begin{lemma}\label{import}
We have $[W,V]=\sum_i S(W,X_i)Y_i-\sum_i S(W,Y_i)X_i$.
\end{lemma}

\begin{proof} 
Since $Y_i$ and $V$ lie in $\mathcal{O}(\overline{G}\slash L)$, it follows from (\ref{nm}) that 
$$[W,V]=\sum_i S(W,X_i)Y_i-\sum_i S(W,Y_i)X_i+\sum S(W',X_i)W''Y_i.$$
Moreover, since $X_i\in\mathcal{O}(L\backslash\overline{G})^+$ for every $i$, and $W'\in\mathcal{O}(\overline{G})$, we have $S(W',X_i)=0$ for every $i$ and $W'$, so the claim follows.
\end{proof}

\begin{lemma}\label{propsofv}
$q(V)=X\ot Y$, where $X$ and $Y$ are primitive elements in the defining ideal of $L$, and $\iota_H^*(X)=\iota_H^*(Y)$.
\end{lemma}

\begin{proof}
Suppose $\iota_{\overline{H}}^*(X_i)\ne 0$. Then since $X_i$ vanishes on $L$, it cannot vanish on $A$. So, $\varphi(X_i)\ne 0$. Moreover, since the degree of $X_i$ is $<\ell$, $X_i$ must be primitive by minimality of $\ell$. Similarly, if $\iota_{\overline{H}}^*(Y_i)\ne 0$ then $Y_i$ must be primitive. Thus $\ell=2$, which implies the statement.
\end{proof}

Set $p:=\iota_H^*(X)=\iota_H^*(Y)$. Then $p$ is primitive in $\mathcal{O}(H)$.

\begin{lemma}\label{propsofv2}
We have $[W,V]=R^J(W,p)(Y-X)$.
\end{lemma}

\begin{proof}
By Lemmas \ref{import} and \ref{propsofv}(2), we have
\begin{eqnarray*}
\lefteqn{[W,V]=S(W,\iota_H^*(X))Y-S(W,\iota_H^*(Y))X}\\
& = & S(W,\iota_H^*(X))Y-S(W,\iota_H^*(X))X=S(W,p)(Y-X)\\
& = & R^J(W,p)(Y-X),
\end{eqnarray*}
as claimed, where the last equation holds by Lemma \ref{prim1}.
\end{proof}

Set $c:=R^J(W,p)(Y-X)(g)\in \mathbb{C}$.

\begin{proposition}\label{centralzw}
We have $[W,V]\equiv c\neq 0$ on $HgH$. Thus, we may assume $[W,V]\equiv 1$ on $HgH$.
\end{proposition}

\begin{proof}
We first show that $c\neq 0$. To this end, we have to show that $X(g)\neq Y(g)$ and $R^J(W,p)\neq 0$. Since 
$\varphi(\rho_{{\bar g}}(V))=\varphi(V)+\varphi(X)Y(g)$, $\varphi(\lambda_{{\bar g}^{-1}}(V))=\varphi(V)+\varphi(Y)X(g)$, 
$\varphi(\rho_{{\bar g}}(V))\neq \varphi(\lambda_{{\bar g}^{-1}}(V))$ by Lemma \ref{vexists}, and $\varphi(X)=\varphi(Y)$ by Lemma \ref{propsofv}, we have $X(g)\neq Y(g)$. Furthermore, since $p$ vanishes on $L$ by Lemma \ref{propsofv}, $p$ is orthogonal to ${}_J\mathcal{O}(\overline{H})_J$ inside ${}_J\mathcal{O}(H)_{J}$. Thus, $R^J(W,p)\neq 0$ by the non-degeneracy of $R^J$ on ${}_J\mathcal{O}(H)_{J}$.

Next we show that $[W,V]\equiv c$ on $\overline{H}\bar g\overline{H}$.  
Since by Lemma \ref{propsofv}, $X$ is primitive in $\mathcal{I}(L)$, we have 
$X(a_1l_1\bar g a_2l_2)=X(a_1)+X(\bar g)+X(a_2)$ for every $a_1,a_2\in A$ and $l_1,l_2\in L$, and similarly for $Y$. 
Thus, since by Lemma \ref{propsofv}, $X=Y$ on $A$, we have $(Y-X)(a_1l_1\bar g a_2l_2)=(Y-X)(g)$
for every $a_1,a_2\in A$ and $l_1,l_2\in L$, which implies that $[W,V]\equiv c$ on $\overline{H}\bar g\overline{H}$, as claimed.
\end{proof}
 
The proof of Theorem \ref{noethdomunip} is complete.
\end{proof}

\begin{question}\label{problem2}
Fix $g\in G$, and set $A:={}_J\mathcal{O}(Z_g)_{J}$. 
\begin{enumerate}
\item
Is it true that $A\cong U^{\omega_g}(\mathfrak{h}_g)\ot \mathscr{W}$ as algebras, where $\mathscr{W}$ is a Weyl subalgebra with $\text{GKdim}(\mathscr{W})=2d_g$?
\item
Is it true that $A$ contains a subalgebra $\mathscr{U}\cong U^{\omega_g}(\mathfrak{h}_g)$, and a Weyl subalgebra $\mathscr{W}$ with $\text{GKdim}(\mathscr{W})=2d_g$, such that $\mathscr{U}\cap \mathscr{W}$ is trivial? 
\item
Is it true that a maximal Weyl subalgebra of $A$ has Gelfand-Kirillov dimension $2d_g+r$, where $r\in 2\mathbb{Z}^{\ge 0}$ is the rank of $\omega_g$ restricted to $\mathfrak{h}_g$? 
\end{enumerate}
(See, e.g., the end of Example \ref{heisen5}.)
\end{question}

\begin{remark}
By Proposition \ref{lem2}, Question \ref{problem2} is a special case of Question \ref{problem1}.
\end{remark}

\section{Representations of ${}_J\mathcal{O}(G)_{J}$ for unipotent $G$}\label{reprs} 

Retain the notation of Sections \ref{algstrhopf} and \ref{algstrquot}. 

\begin{theorem}\label{thm1}
Every irreducible representation $V$ of ${}_J\mathcal{O}(G)_{J}$ factors through a unique quotient ${}_J\mathcal{O}(Z)_{J}$.
\end{theorem}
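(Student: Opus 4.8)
The plan is to exhibit, for a given irreducible representation $V$ of ${}_J\mathcal{O}(G)_{J}$, a single double coset $Z=HgH$ whose defining ideal $\mathcal{I}(Z)$ annihilates $V$, and to show this $Z$ is unique. First I would record that the central commutative subalgebra $\mathcal{O}(H\backslash G\slash H)\subset {}_J\mathcal{O}(G)_{J}$ of Lemma \ref{centralsubalg} acts on $V$ by a scalar character, since $V$ is irreducible (by Schur's lemma, which applies because $\End({}_J\mathcal{O}(G)_{J})(V)=\mathbb{C}$ — here one should note that if $V$ is infinite dimensional one works instead with a suitable version of Schur, or uses that ${}_J\mathcal{O}(G)_{J}$ is affine Noetherian so its irreducible modules are cyclic and the endomorphism ring is a division algebra over $\mathbb{C}$, hence $\mathbb{C}$ since $\mathbb{C}$ is algebraically closed and the module has countable dimension by the Quillen/Amitsur argument). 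That character is an algebra homomorphism $\chi:\mathcal{O}(H\backslash G\slash H)\to\mathbb{C}$, i.e.\ a point of the affine variety $H\backslash G\slash H$. The key geometric input is that $H\backslash G\slash H$ is the categorical quotient of $G$ by the $H\times H$ action, so its points are exactly the double cosets: $\chi$ corresponds to a unique $Z=HgH$, and $\chi$ vanishes on functions vanishing on $Z$.

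The heart of the argument is then to upgrade ``$\ker\chi\cdot V=0$'' to ``$\mathcal{I}(Z)\cdot V=0$''. Here I would use the surjection $m_g^*:{}_J\mathcal{O}(G)_{J}\twoheadrightarrow ({}_J\mathcal{O}(H)\ot \mathcal{O}(H)_J)^{\theta(H_g)}$ of Proposition \ref{lem2} with kernel $\mathcal{I}(Z_g)$, together with Corollary \ref{lem3}, which says $\mathcal{I}(Z_g)$ is completely prime and the quotient ${}_J\mathcal{O}(Z_g)_{J}$ is an affine Noetherian domain. The point is that $\mathcal{I}(Z_g)$ is generated, as a two-sided ideal, by its intersection with the central subalgebra $\mathcal{O}(H\backslash G\slash H)$ — equivalently, $\mathcal{I}(Z_g)=\left(\mathcal{I}(Z_g)\cap \mathcal{O}(H\backslash G\slash H)\right){}_J\mathcal{O}(G)_{J}$. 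Indeed, the ideal $\mathcal{I}(Z_g)\cap \mathcal{O}(H\backslash G\slash H)$ is the maximal ideal $\ker\chi$ of the point $Z_g$ in $H\backslash G\slash H$, so it suffices to check that extending this maximal ideal back up cuts out exactly $\mathcal{I}(Z_g)$; this follows because ${}_J\mathcal{O}(G)_{J}$ is a finitely generated module-free-ish extension of the central subalgebra in the relevant sense (more precisely, $\mathcal{O}(G)$ is faithfully flat — even free — over $\mathcal{O}(H\backslash G\slash H)$ in the unipotent case, since all these varieties are affine spaces and the quotient maps are fibrations, and twisting does not change the underlying coalgebra or the central subalgebra, so faithful flatness is inherited). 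Once this is established, $\ker\chi\cdot V=0$ forces $\mathcal{I}(Z_g)\cdot V=0$, i.e.\ $V$ is a module over ${}_J\mathcal{O}(Z_g)_{J}$, as desired.

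For uniqueness: if $V$ factors through both ${}_J\mathcal{O}(Z)_{J}$ and ${}_J\mathcal{O}(Z')_{J}$ for double cosets $Z\ne Z'$, then both $\mathcal{I}(Z)$ and $\mathcal{I}(Z')$ annihilate $V$, hence so does $\mathcal{I}(Z)+\mathcal{I}(Z')$; intersecting with the central subalgebra, the character $\chi$ must kill both maximal ideals $\ker\chi_Z$ and $\ker\chi_{Z'}$, which is impossible for distinct points unless $1\in\ker\chi_Z+\ker\chi_{Z'}$ acts as $0$ on $V\ne 0$, a contradiction. So $Z$ is unique.

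I expect the main obstacle to be the faithful-flatness / ideal-generation step: justifying rigorously that $\mathcal{I}(Z_g)$ is generated by its central part $\mathcal{I}(Z_g)\cap\mathcal{O}(H\backslash G\slash H)$ inside the noncommutative algebra ${}_J\mathcal{O}(G)_{J}$. The commutative analogue (that $\mathcal{I}(Z)$ in $\mathcal{O}(G)$ is the extension of the ideal of a point in $H\backslash G\slash H$) is classical GIT for unipotent groups acting on affine space with closed orbits, but one must check that passing to the twist ${}_J m_J$ — which changes multiplication but not the coalgebra, and keeps $\mathcal{O}(H\backslash G\slash H)$ central — does not disturb this. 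The cleanest route is probably to observe that $m_g^*$ is already available from Proposition \ref{lem2} as a twisted version of $\Delta$ composed with projections, and to track the central subalgebra through that map directly, rather than invoking flatness abstractly; the surjectivity in Proposition \ref{lem2} plus the domain property in Corollary \ref{lem3} should then be exactly what is needed to conclude. $\square$
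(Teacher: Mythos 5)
Your overall strategy --- use the central character of $\mathcal{O}(H\backslash G\slash H)$ to locate a double coset --- is the right starting point and coincides with the paper's first step, but the heart of your argument rests on a claim that is false in general: that $\mathcal{I}(Z_g)$ is generated as a two-sided ideal by $\mathcal{I}(Z_g)\cap\mathcal{O}(H\backslash G\slash H)$, equivalently that the points of $\Spec\mathcal{O}(H\backslash G\slash H)$ ``are exactly the double cosets.'' For unipotent group actions the orbits are closed (Kostant--Rosenlicht), but the invariants need not separate them, and faithful flatness of $\mathcal{O}(G)$ over the invariant subring does not repair this: the extension of the maximal ideal of a point of the categorical quotient cuts out the whole fiber of the quotient map, which can be a union of infinitely many double cosets. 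This already happens in the paper's own Example \ref{heisen1}: there every biinvariant polynomial is constant on each $3$-dimensional double coset $\{Y=y_0\}$, $y_0\neq 0$, hence (by density) lies in $\mathbb{C}[Y]$, so $\mathcal{O}(H\backslash G\slash H)=\mathbb{C}[Y]$; but the fiber $\{Y=0\}$ decomposes into the one-parameter family of $2$-dimensional double cosets with ideals $(Y,W-w_0)$, and the central character cannot see $w_0$ since $W$ is not biinvariant. Thus $(Y)\subsetneq(Y,W-w_0)$ and your key step fails. The same defect breaks your uniqueness argument: two distinct double cosets inside such a fiber induce the \emph{same} central character, so $1\notin\ker\chi_Z+\ker\chi_{Z'}$. (Uniqueness should instead be run with the full ideals: distinct double cosets are disjoint closed subsets of $G$, so $\mathcal{I}(Z)+\mathcal{I}(Z')=\mathcal{O}(G)$ by the Nullstellensatz, which is what the paper does.)

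The missing idea is an iteration. After cutting $G$ down to the closed subscheme $Z_0$ defined by the ideal generated by $\ker\chi_0$, one observes that if $Z_0$ is not a single $H\times H$-orbit, then $\mathcal{O}(H\backslash Z_0\slash H)$ is again nontrivial: the ideal of a closed orbit $Y\subsetneq Z_0$ is a nonzero $H\times H$-stable subspace, hence (as $H$ is unipotent) contains a nonzero fixed vector, which is a nonconstant biinvariant function on $Z_0$. This yields a further central character $\chi_1$, a smaller closed subscheme $Z_1$, and so on; the Hilbert basis theorem forces the descending chain $Z_0\supset Z_1\supset\cdots$ to stabilize at a single orbit $HgH$. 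Without this descent, or some substitute for it, your proof does not go through. Your remarks on Schur's lemma for possibly infinite-dimensional irreducibles via the countable-dimension argument are fine and essentially what the paper uses.
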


\begin{proof}
By Lemma \ref{centralsubalg}, the central subalgebra $\mathcal{O}(H\backslash G\slash H)\subset {}_J\mathcal{O}(G)_{J}$ acts on $V$ by a certain central character $\chi_0: \mathcal{O}(H\backslash G\slash H)\to \mathbb{C}$. Let $K_0:={\rm Ker}(\chi_0)$, let $I_0\subset \mathcal{O}(G)$ be the ideal generated by $K_0$, and let $Z_0\subset G$ be the closed subscheme defined by $I_0$. Then $Z_0$ is an affine scheme of finite type (i.e., $\mathcal{O}(Z_0)$ can be non-reduced and have nilpotents) with an $H\times H$-action, and all orbits of $H\times H$ on the underlying variety of $Z_0$ are closed by the theorem of Kostant and Rosenlicht since $H$ is unipotent. Pick an orbit $Y$ in $Z_0$. If $Y=Z_0$ then $Z_0=HgH$ is a single $H\times H$-orbit, so $V$ factors through ${}_J\mathcal{O}(Z_g)_{J}$, and we are done. 

Otherwise, let $\tilde{I}_0$ be the ideal of functions on $Z_0$ vanishing on $Y$. Then $\tilde{I}_0$ is invariant under $H\times H$, and is a union of finite dimensional $H\times H$-modules, so it has a fixed vector $f\ne 0$ (as $H$ is unipotent), and this $f$ cannot be constant since it vanishes on $Y$. Thus, $\mathcal{O}(H\backslash Z_0\slash H)$ is nontrivial. 

Now consider the nontrivial central subalgebra $\mathcal{O}(H\backslash Z_0\slash H)\subset \mathcal{O}(Z_0)$. It has a maximal ideal $\mathfrak{n}$, so $\mathcal{O}(H\backslash Z_0\slash H)/\mathfrak{n}$ is a field extension of $\mathbb{C}$. But it is countably dimensional, so has to be $\mathbb{C}$. Thus, we have a central character $\chi_1: \mathcal{O}(H\backslash Z_0\slash H)\to \mathbb{C}$ by which $\mathcal{O}(H\backslash Z_0\slash H)$ acts on $V$, as above. Let $K_1:={\rm Ker}(\chi_1)$, let $I_1\subset \mathcal{O}(G)/I_0$ be the ideal generated by $K_1$, and let $Z_1\subset Z_0$ be the closed subscheme defined by $I_1$. Then $Z_1$ is $H\times H$-stable. Thus we can proceed as above by looking at the orbits of $H\times H$ in $Z_1$. However, by the Hilbert basis theorem, the sequence $Z_0\supset Z_1\supset \cdots$ must stabilize. Hence the process will end, and we will reach a single $H\times H$-orbit $HgH$, as desired.

Finally, since $\mathcal{I}(Z)+\mathcal{I}(Z')={}_J\mathcal{O}(G)_{J}$ for every two distinct double cosets $Z$ and $Z'$, $\mathcal{I}(Z)$ and $\mathcal{I}(Z')$ cannot both annihilate $V$.
\end{proof}
 
Let $N_G(H,J)$ be the subgroup of the normalizer $N_G(H)$ of $H$ in $G$, consisting of all elements $g\in N_G(H)$ such that $J$ is $g$-invariant.

\begin{theorem}\label{repthm}
There is one to one correspondence between 
isomorphism classes of finite dimensional irreducible representations of ${}_J\mathcal{O}(G)_{J}$ and elements of the group $N_G(H,J)$.
\end{theorem}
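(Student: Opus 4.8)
The plan is to combine Theorem~\ref{thm1} with the dichotomy established in Theorems~\ref{algisomlemmanormal} and \ref{noethdomunip}. By Theorem~\ref{thm1}, every finite dimensional irreducible representation $V$ of ${}_J\mathcal{O}(G)_{J}$ factors through a unique quotient ${}_J\mathcal{O}(Z_g)_{J}$ for some double coset $Z_g=HgH$. Now if $H$ is not $g$-invariant, then by Theorem~\ref{noethdomunip} the algebra ${}_J\mathcal{O}(Z_g)_{J}$ contains a copy of a Weyl algebra $\mathscr{W}_n$ with $n\ge 1$; since Weyl algebras have no nonzero finite dimensional modules (the relation $[p,q]=1$ forces $\tr(1)=0$ on any finite dimensional module), the restriction of $V$ to that subalgebra, hence $V$ itself, must be zero. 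Therefore any double coset supporting a finite dimensional irreducible representation must have $H$ being $g$-invariant. Conversely, if $H$ is $g$-invariant, Theorem~\ref{algisomlemmanormal} gives ${}_J\mathcal{O}(Z_g)_{J}\cong U^{\omega_g}(\h)$, and since $\h$ is nilpotent, this algebra has finite dimensional irreducible representations (in fact, by the theory of nilpotent Lie algebras, these exist and are parametrized by appropriate data, but in any case at least one exists).

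The next step is to pin down \emph{which} $g$-invariant double cosets actually contribute, and to count representations. First I would observe that $H$ is $g$-invariant precisely when $g\in N_G(H)$. Moreover, I claim the relevant condition is that $J$ (not just $H$) be $g$-invariant, i.e.\ $g\in N_G(H,J)$. Indeed, by Theorem~\ref{algisomlemmanormal}, ${}_J\mathcal{O}(Z_g)_{J}\cong U^{\omega_g}(\h)$ where $\omega_g=\omega^g-\omega$, and a maximal Weyl subalgebra of $U^{\omega_g}(\h)$ has dimension $r$, the rank of $\omega_g$ on $\h$. Now $U^{\omega_g}(\h)$ admits a finite dimensional irreducible representation \emph{if and only if} $\omega_g$ is such that $U^{\omega_g}(\h)$ is not forced to contain a Weyl subalgebra with a nontrivial central relation obstructing finite dimensionality; but actually $U^{\omega_g}(\h)$ always has finite dimensional irreducibles when $\h$ is nilpotent, \emph{unless} $r>0$ — wait, one must be careful: $U^{\omega_g}(\h)$ with $r>0$ surjects onto $\mathscr{W}_{r/2}$, which kills all finite dimensional modules. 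So $U^{\omega_g}(\h)$ has a finite dimensional irreducible representation iff $r=0$, i.e.\ iff $\omega_g\equiv 0$ on $\h$, i.e.\ iff $\omega^g=\omega$, i.e.\ iff $J^g=J$ (as minimal Hopf $2$-cocycles for $H$), i.e.\ iff $g\in N_G(H,J)$. When $r=0$ we have $U^{\omega_g}(\h)=U(\h)$, which by Corollary~\ref{repthmmin} has a \emph{unique} finite dimensional irreducible representation (as $\h$ is nilpotent).

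Putting this together: the double cosets $Z_g$ on which some finite dimensional irreducible representation is supported are exactly those with $g\in N_G(H,J)$, and each such $Z_g$ supports exactly one finite dimensional irreducible (the unique one of ${}_J\mathcal{O}(Z_g)_{J}\cong U(\h)$). So the bijection is: isomorphism class of a finite dimensional irreducible $V \mapsto$ the double coset $HgH$ through which it factors, which is a well-defined element of $N_G(H,J)/\!\sim$ — but here I need the final, slightly delicate point: the set of double cosets $HgH$ with $g\in N_G(H,J)$ is in bijection with $N_G(H,J)$ \emph{itself}, because $N_G(H,J)$ normalizes $H$, so the double coset $HgH$ with $g\in N_G(H,J)$ equals the single coset $Hg=gH$, and hence the double cosets are just the cosets of $H$ in $N_G(H,J)$ — no wait, that gives $N_G(H,J)/H$, not $N_G(H,J)$. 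Let me reconsider: I expect the correct statement uses that $H$ itself, being the support of a \emph{minimal} cocycle $J$, acts trivially in the relevant sense, so that the distinct double cosets $HgH$ for $g\in N_G(H,J)$ correspond bijectively to $N_G(H,J)$ once one accounts for the fact that $H\subset N_G(H,J)$ acts and the representations attached to $Hg$ for different coset representatives in the same $H$-coset are genuinely distinct — concretely, via the translation isomorphisms $\lambda_{g^{-1}}$ of Lemma~\ref{adg} one tracks how the unique irreducible of $U(\h)$ transforms, and a character twist distinguishes them.

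\textbf{Main obstacle.} The conceptual content — that finite dimensionality forces $g$-invariance of both $H$ and $J$, via the Weyl-algebra obstruction — is essentially immediate from the earlier theorems. The real work, and the step I expect to be the main obstacle, is the \emph{counting}: showing that the set of double cosets $HgH$ with $g\in N_G(H,J)$ is in bijection with $N_G(H,J)$ (rather than $N_G(H,J)/H$ or $H\backslash N_G(H,J)/H$), and that each such coset carries exactly one finite dimensional irreducible so that no collapsing or multiplicity occurs. This requires understanding precisely how $H$ sits inside $N_G(H,J)$ and how the unique irreducible representations of the various $U(\h)$-quotients are related under the identifications ${}_J\mathcal{O}(Z_g)_{J}\cong {}_{J^g}\mathcal{O}(H)_J\cong U(\h)$ of Theorem~\ref{algisomlemmanormal}; I would handle it by showing that for $h\in H$ the double coset $HhgH=HgH$ but the resulting representations of ${}_J\mathcal{O}(G)_{J}$ pulled back from ${}_J\mathcal{O}(Z_g)_{J}$ are all isomorphic (so $H$ contributes nothing), while for $g,g'\in N_G(H,J)$ in distinct cosets of... — in short, the bookkeeping of which coset data survives is the crux, and I would resolve it by a careful analysis of the central characters $\chi_0$ appearing in the proof of Theorem~\ref{thm1} restricted to $N_G(H,J)$.
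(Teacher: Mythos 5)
Your first half is exactly the paper's argument: by Theorem \ref{thm1} a finite dimensional irreducible factors through a unique ${}_J\mathcal{O}(Z_g)_J$; if $H$ is not $g$-invariant, Theorem \ref{noethdomunip} produces a Weyl subalgebra, and the trace of $[p,q]=1$ kills every finite dimensional module; if $H$ is $g$-invariant, Theorem \ref{algisomlemmanormal} gives ${}_J\mathcal{O}(Z_g)_J\cong U^{\omega_g}(\h)$, which admits finite dimensional modules iff the rank $r$ of $\omega_g$ is zero, i.e.\ iff $\omega^g=\omega$, i.e.\ iff the coset has a representative in $N_G(H,J)$. That part is correct and is all the paper itself says.

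The genuine gap is the counting step, which you rightly single out as the crux but then mis-analyze on two points. First, ${}_J\mathcal{O}(Z_g)_J\cong U(\h)$ does \emph{not} have a unique finite dimensional irreducible: by Lie's theorem every finite dimensional irreducible of the nilpotent $\h$ is one-dimensional, so they are the characters of $\h$, i.e.\ the points of $(\h/[\h,\h])^*$ --- a positive-dimensional family whenever $\h\neq 0$. Second, $H$ is \emph{not} contained in $N_G(H,J)$ in general: an element $h\in H$ lies in $N_G(H,J)$ iff $\omega^h=\omega$, and combining the $2$-cocycle identity for $\omega$ with the ad-invariance condition shows that the Lie algebra of $H\cap N_G(H,J)$ is $[\h,\h]^{\perp_{\omega}}$, which the non-degenerate form $\omega$ identifies with $(\h/[\h,\h])^*$. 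The correct bookkeeping is therefore: the qualifying double cosets are in bijection with $N_G(H,J)/(H\cap N_G(H,J))$, each carries a set of irreducibles in bijection with $(\h/[\h,\h])^*\cong H\cap N_G(H,J)$, and the total is $N_G(H,J)$. (Sanity check against Example \ref{heisen}: there ${}_J\mathcal{O}(G)_J=\mathcal{O}(G)$ is commutative, its irreducibles are the points of $G=N_G(H,J)$, and each coset $gH$ carries the two-parameter family of characters of $\mathbb{C}[X,V]$.) Your proposed resolution --- that the irreducibles attached to one double coset are ``all isomorphic (so $H$ contributes nothing)'' while simultaneously being ``distinguished by a character twist'' --- is internally inconsistent and, on either reading, would yield $N_G(H,J)/H$ rather than $N_G(H,J)$. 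In fairness, the paper's one-line proof elides exactly this point (its phrase ``only the trivial finite dimensional irreducible representation'' must be read as ``only one-dimensional ones''), so you have located a real subtlety; but as written your argument does not establish the stated bijection.
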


\begin{proof} 
Follows from Theorems \ref{algisomlemmanormal}, \ref{noethdomunip} and \ref{thm1}, and the facts that Weyl algebras of degree $\ge 1$ have no finite dimensional representations, and nilpotent Lie algebras have only the trivial finite dimensional irreducible representation.
\end{proof}

\begin{remark}\label{2cocsim3}
Retain the notation from Remark \ref{2cosim2}. 
Then every irreducible representation of the algebra $_{K}{}\mathcal{O}(G)_J$ factors through a unique quotient algebra $_{K}{}\mathcal{O}(Z_g)_{J}$. Furthermore, if $d_g=0$ then $K$ and $J$ are gauge equivalent, and $_{K}{}\mathcal{O}(G)_J$ has finite dimensional irreducible representations if and only if $J$ is $g$-invariant.
Indeed, note that  
$d_g=0$ if and only if $N_g=\widetilde H=gHg^{-1}$. But the later implies that $K$, $J$ are gauge equivalent. Thus we are reduced to Theorem \ref{repthm}.
\end{remark}

\section{Examples}\label{examples}

Let $A=\mathbb{G}_a^2$ with $\mathcal{O}(A)=\mathbb{C}[X,V]$, and let $\mathfrak{a}$ be the Lie algebra of $A$, with basis $\{\frac{\partial}{\partial X},\frac{\partial}{\partial V}\}$. Let $r:=\frac{\partial}{\partial X}\wedge \frac{\partial}{\partial V}$. Then the composition
$$J:\mathcal{O}(A)\ot \mathcal{O}(A)\xrightarrow{e^{r/2}}\mathcal{O}(A)\ot \mathcal{O}(A)\xrightarrow{\varepsilon\ot \varepsilon}\mathbb{C}$$  
is a minimal Hopf $2$-cocycle for $A$ \cite[Section 4]{EG1}, and it is straightforward to verify that 
\begin{equation}\label{ersq}
J(X,V)=J^{-1}(V,X)=1/2,\,\,J(V,X)=J^{-1}(X,V)=-1/2.
\end{equation}
Clearly, we have ${}_J\mathcal{O}(A)_{J}=\mathcal{O}(A)\cong U(\mathfrak{a})$ as Hopf algebras.

\begin{example}\label{heisen}
Let 
$$G= \left \{
1 + xE_{12}+ vE_{13} + yE_{23}
\middle | x, v, y\in \mathbb{C}
\right \}\subset U_3
$$
be the Heisenberg group of dimension $3$. Its coordinate Hopf algebra is a polynomial algebra $\mathcal{O}(G)=\mathbb{C}[X,Y,V]$, with $X,Y$ being primitive, and 
$\Delta(V)=V\ot 1 + 1\ot V +X\ot Y$.

Set $$a:=\frac{\partial}{\partial X},\;b:=\frac{\partial}{\partial Y},\;c:=\frac{\partial}{\partial V}.$$ Then $\mathfrak{g}:=\Lie(G)$ has basis $a,b,c$, with bracket $[a,b]=c$. The element $r:=a\wedge c$ is a non-degenerate $\g$-invariant solution to the CYBE in $\wedge^2 \mathfrak{h}$, where $\mathfrak{h}\subset \g$ is the (abelian) Lie subalgebra spanned by $a,c$.  
Thus, $J:=e^{r/2}$ is a minimal Hopf $2$-cocycle for $H$, where 
$$H=\left \{
1 + xE_{12}+ vE_{13}
\middle | x, v\in \mathbb{C}
\right \}\subset G$$ 
is the (normal abelian) subgroup with $\Lie(H)=\mathfrak{h}$, and we have that ${}_J\mathcal{O}(H)_{J}=\mathcal{O}(H)\cong U(\mathfrak{h})$ as Hopf algebras.

We now view $J$ as a (non-minimal) Hopf $2$-cocycle for $G$ by pulling it back along the obvious Hopf algebra surjective map $\mathcal{O}(G)\twoheadrightarrow \mathcal{O}(H)$ determined by $Y\mapsto 0$, $X\mapsto X$, and $V\mapsto V$. 
Since $J$ is an invariant Hopf $2$-cocycle for $G$, 
${}_J\mathcal{O}(G)_{J}=\mathcal{O}(G)\cong \mathcal{O}(G/H)\ot {}_J\mathcal{O}(H)_{J}$ as algebras (so, ${}_J\mathcal{O}(G)_{J}$ is isomorphic to $U(\mathbb{C}^3)$ as an algebra, but not to $U(\g)$), and ${}_J\mathcal{O}(Z_g)_{J}\cong{}_J\mathcal{O}(H)_{J}=\mathcal{O}(H)\cong U(\mathfrak{h})$ for every $g\in G$ since $H$ is normal and $r$ is $\g$-invariant.
\end{example}

\begin{example}\label{heisen1} (The induction base in the proof of Theorem \ref{noethdomunip}.)
Let 
$$G= \left \{
\left( \begin{array}{cccc}
1 & x & v & w \\
0 & 1 & y & \frac{y^2}{2} \\
0 & 0 & 1 & y\\ 
0 & 0 & 0 & 1
\end{array} \right)
\middle | x, v, w, y\in \mathbb{C}
\right \}\subset U_4.
$$
Then $G$ is a $4$-dimensional unipotent algebraic group over $\mathbb{C}$.
Its coordinate Hopf algebra $\mathcal{O}(G)=\mathbb{C}[X,Y,V,W]$ is a polynomial algebra, with $X,Y$ being primitive, and 
$$\Delta(V)=V\ot 1 + 1\ot V +X\ot Y,$$ $$\Delta(W)=W\ot 1 + 1\ot W +V\ot Y + X\ot Y^2/2.$$ 

Let $C:=\{1+wE_{14}\mid w\in \mathbb{C}\}$. Then $C\cong \mathbb{G}_a$ is a closed central subgroup of $G$.

Set $$a:=\frac{\partial}{\partial X},\;b:=\frac{\partial}{\partial Y},\;c:=\frac{\partial}{\partial V},\;d:=\frac{\partial}{\partial W}.$$ Then $\mathfrak{g}:=\Lie(G)$ has basis $a,b,c,d$, with brackets $[a,b]=c$, $[c,b]=d$.

Let 
$$H:=\{1+xE_{12}+vE_{13}\mid x,v\in \mathbb{C}\}\subset G.$$ Then $H\cong\mathbb{G}_a^2$ and $\mathcal{O}(H)=\mathbb{C}[X,V]$ is a polynomial Hopf algebra. Since $C\cap H=\{1\}$, we have $H\cong L$ (see \ref{I}). Let $\mathfrak{h}:={\rm Lie}(H)$ with basis $a,c$, let $r:=a\wedge c$, and let $J:=e^{r/2}$ as above. We have ${}_J\mathcal{O}(H)_{J}=\mathcal{O}(H)$ as algebras.

Pull $J$ back to $\mathcal{O}(G)$ along the obvious Hopf algebra surjective map $\mathcal{O}(G)\twoheadrightarrow \mathcal{O}(H)$ determined by $Y,W\mapsto 0$, $X\mapsto X$, and $V\mapsto V$. By (\ref{nm}) and (\ref{ersq}), it is straightforward to find out that ${}_J\mathcal{O}(G)_{J}$ is generated as an algebra by $X,Y,V,W$, subject to the relations
$$
[X,Y]=[X,V]=[Y,V]=[Y,W]=0,\,\,[W,X]=Y,\,\,[W,V]=Y^2/2.
$$
In particular $X,V$ span a lie algebra isomorphic to $\mathfrak{h}$, $W,Y$ span an abelian lie algebra $\mathfrak{a}$, and we have algebra isomorphisms 
$${}_J\mathcal{O}(G)_{J}\cong U(\mathfrak{a})\rtimes U(\mathfrak{h})\cong \mathcal{O}(G/H)\#{}_J\mathcal{O}(H)_{J} ,$$ where $[X,W]=-Y$, $[X,Y]=0$, $[V,Y]=0$ and $[V,W]=-Y^2/2$. (See Theorem \ref{noethdomunipgen}.)

Take $g:=g(x_0,v_0,w_0,0)\in G$. Then $H_{g}=H$, $\mathcal{I}(Z_{g})=(Y, W-w_0)$, and ${}_J\mathcal{O}(Z_{g})_J\cong \mathbb{C}[X,V]\cong U(\mathfrak{h}_g)=U(\mathfrak{h})$ as algebras.

Take $g:=g(0,0,0,y_0)$, $y_0\ne 0$. Then $g^{-1}=g(0,0,0,-y_0)$. We have 
$$H_{g}=\{1+xE_{12}-\frac{y_0}{2}xE_{13}\mid x\in\mathbb{C}\}\cong \mathbb{G}_a,$$
and
$$HgH=\left\{
\left( \begin{array}{cccc}
1 & x & v& w\\
0 & 1 & y_0 & \frac{y_0^2}{2} \\
0 & 0 & 1 & y_0\\ 
0 & 0 & 0 & 1
\end{array} \right)\middle | x,v,w\in \mathbb{C}\right\}.$$
It follows that $\mathcal{I}(Z_{g})=(Y-y_0)$, and $[W,V]\equiv y_0^2/2\neq 0$ on $HgH$. Thus, we have 
$${}_J\mathcal{O}(Z_{g})_J\cong \mathbb{C}[X]\ot \mathbb{C}[V][W;d/dV]\cong U(\mathfrak{h}_g)\ot \mathscr{W}_1$$ as algebras. 

Finally, note that we have $p=\frac{y_0}{2} X+V$, $\iota_L^*(\rho_g(W)-W-W(g))=y_0p$, $p$ vanishes on $H_g$, and $R^J(p,\iota^*(X))=1$. Also, for $l=l(x,v)\in L$, we have $\tau(l)=1-y_0(y_0x/2+v)E_{14}\in C$ (see \ref{I}).
\end{example}

\begin{example}\label{heisen3}
Let $G$ and $\g$ be as in Example \ref{heisen1}. 
Set 
$$r:=a\wedge c + d\wedge b.$$ Then $r$ is a non-degenerate solution of the CYBE in $\wedge^2\mathfrak{g}$, corresponding to the symplectic structure $\omega$ on $\mathfrak{g}$ determined by $\omega(a,c)=\omega(d,b)=1$. 
Let $J:=1+r/2+\cdots$ be the corresponding minimal Hopf $2$-cocycle for $G$ \footnote{It is known that $J$ has this form (see, e.g., \cite{EG1}).}. It is straightforward to verify that $$J(X,V)=J^{-1}(V,X)=J(W,Y)=J^{-1}(Y,W)=1/2,$$ $$J(V,X)=J^{-1}(X,V)=J(Y,W)=J^{-1}(W,Y)=-1/2,$$ and $J,J^{-1}$ vanish on other pairs of generators. 

By (\ref{nm}), it is straightforward to find out that the minimal cotriangular Hopf algebra ${}_J\mathcal{O}(G)_{J}$ is generated as an algebra by $X,Y,V,W$, such that
$$
[W,X]=Y,\;\;[W,V]=Y^2/2 +X,
$$
and other pairs of generators commute.  
Set $X':=Y^2/2 +X$. Then $X',Y,V,W$ span a Lie algebra of dimension $4$ such that $[W,V]=X'$ and $[W,X']=Y$, hence isomorphic to $\mathfrak{g}$. Thus, ${}_J\mathcal{O}(G)_{J}\cong U(\mathfrak{g})$ as algebras (see Theorem \ref{noethdomminunip}). 
\end{example}

\begin{example}\label{heisen4}
Let $G:=U_4$ be the $6$-dimensional unipotent algebraic group of $4$ by $4$ upper triangular matrices over $\mathbb{C}$.
Its coordinate Hopf algebra $\mathcal{O}(G)=\mathbb{C}[F_{12},F_{23},F_{34},F_{13},F_{24},F_{14}]$ is a polynomial algebra, with $F_{12},F_{23},F_{34}$ being primitive, 
$$\Delta(F_{13})=F_{13}\ot 1 + 1\ot F_{13} +F_{12}\ot F_{23},\,\,\,\Delta(F_{24})=F_{24}\ot 1 + 1\ot F_{24} +F_{23}\ot F_{34}$$
and 
$$\Delta(F_{14})=F_{14}\ot 1 + 1\ot F_{14} +F_{13}\ot F_{34} + F_{12}\ot F_{24}.$$ 

Let $H:=\{1+xE_{12}+uE_{34}|x,u\in \mathbb{C}\}\subset G$. Then $H\cong\mathbb{G}_a^2$ is a closed subgroup of $G$, and $\mathcal{O}(H)=\mathbb{C}[X,U]$ is a polynomial Hopf algebra. Let $a:=\frac{\partial}{\partial X}$, $c:=\frac{\partial}{\partial U}$, $r:=a\wedge c$, and $J:=e^{r/2}$.

Pull $J$ back to $\mathcal{O}(G)$ along the Hopf algebra surjective homomorphism $\mathcal{O}(G)\twoheadrightarrow \mathcal{O}(H)$ determined by $F_{23},F_{13},F_{24},F_{14}\mapsto 0$, $F_{12}\mapsto X$, and $F_{34}\mapsto U$. Then it is straightforward to verify that $_J{}\mathcal{O}(G/H)$ is generated as an algebra by 
$F_{23},F_{13},Y,V$, where $Y:=F_{24}-F_{23}F_{34}$ and $V:=F_{14}-F_{13}F_{34}$, such that 
$$[Y,F_{13}]=F_{23}^2,\,\,\,[F_{13},V]=F_{23}F_{13},\,\,\,[Y,V]=F_{23}Y,$$
and other pairs of generators commute. 
In particular, $_J{}\mathcal{O}(G/H)$ is not commutative (see Remark \ref{typically}).
\end{example}

\begin{example}\label{heisen5}
Let $G:=U_4$ be as in Example \ref{heisen4}. Let  
$$H= \left \{
\left( \begin{array}{cccc}
1 & x & v & w \\
0 & 1 & y & \frac{y^2}{2} \\
0 & 0 & 1 & y\\ 
0 & 0 & 0 & 1
\end{array} \right)
\middle | x, v, w, y\in \mathbb{C}
\right \},
$$
and $J$ be as in Example \ref{heisen3} (where $H$ is denoted there by $G$).

Pull $J$ back to $\mathcal{O}(G)$ along the surjective Hopf algebra homomorphism $\mathcal{O}(G)\twoheadrightarrow \mathcal{O}(H)$ determined by 
$$F_{12}\mapsto X,\,\,F_{13}\mapsto V,\,\,F_{14}\mapsto W,\,\,F_{23},F_{34}\mapsto Y,\,\,F_{24}\mapsto Y^2/2.$$ Then using (\ref{ersq}), it is straightforward to verify that ${}_J\mathcal{O}(G)_{J}$ is generated as an algebra by 
$\{F_{ij}\}$, such that  
$$[F_{14},F_{12}]=F_{34},\,\,[F_{13},F_{14}]=F_{24}-F_{12}-F_{23}F_{34},\,\,[F_{14},F_{24}]=F_{23}-F_{34},$$
and other pairs of generators commute.

Let $C:=\{1+wE_{14}\mid w\in \mathbb{C}\}$. Then $C\subset H$ is central in $G$ (see \ref{II}). 
We have
$$L= 
\left\{\left(\begin{array}{cccc}
1 & x & v& 0\\
0 & 1 & 0 & 0 \\
0 & 0 & 1 & 0\\ 
0 & 0 & 0 & 1
\end{array}\right) \middle | x,v\in \mathbb{C}
\right \}\subset \overline{H}= 
\left\{\left(\begin{array}{cccc}
1 & x & v& 0\\
0 & 1 & y & \frac{y^2}{2} \\
0 & 0 & 1 & y\\ 
0 & 0 & 0 & 1
\end{array} \right)\middle | x, v, y\in \mathbb{C}
\right \}$$
and 
$$A=\left\{\left(\begin{array}{cccc}
1 & 0 & 0& 0\\
0 & 1 & y & \frac{y^2}{2} \\
0 & 0 & 1 & y\\ 
0 & 0 & 0 & 1
\end{array} \right)\middle | y\in \mathbb{C}
\right \}\cong \mathbb{G}_a.$$

Now take $g:=1+E_{34}$. Then we have 
$$H_{g}=\{1+xE_{12}+vE_{13}+wE_{14}\mid x,v,w\in\mathbb{C}\}\cong \mathbb{G}_a^3,\,\,\dim(HgH)=5,\,\,d=1,$$
$$\overline{H}_{\bar g}=\{1+xE_{12}+vE_{13}\mid x,v\in\mathbb{C}\}\cong \mathbb{G}_a^2,\,\,\dim(\overline{H}\bar g\overline{H})=4,$$
$$L=L_{{\bar g}}\cong \mathbb{G}_a^2,\,\,\dim(L\bar g L)=2,\,\,\bar d=0,$$
$$\bar g A=\left\{\left(\begin{array}{cccc}
1 & 0 & 0& 0\\
0 & 1 & y & \frac{y^2}{2} \\
0 & 0 & 1 & y+1\\ 
0 & 0 & 0 & 1
\end{array} \right)\middle | y\in \mathbb{C}
\right \},\,\,A\bar g =\left\{\left(\begin{array}{cccc}
1 & 0 & 0& 0\\
0 & 1 & y & y+\frac{y^2}{2} \\
0 & 0 & 1 & y+1\\ 
0 & 0 & 0 & 1
\end{array} \right)\middle | y\in \mathbb{C}
\right \}$$
and
$$A\bar g A=\left\{\left(\begin{array}{cccc}
1 & 0 & 0& 0\\
0 & 1 & y & z+\frac{y^2}{2}\\
0 & 0 & 1 & y +1\\ 
0 & 0 & 0 & 1
\end{array} \right)\middle | y,z\in \mathbb{C}
\right \}.
$$
In particular, $A$ is not $\bar g$-invariant (see \ref{II.2}).

Now it is straightforward to verify that
$$HgH=\left\{
\left( \begin{array}{cccc}
1 & x & v& w\\
0 & 1 & y & z \\
0 & 0 & 1 & y+1\\ 
0 & 0 & 0 & 1
\end{array} \right)\middle | x,v,w,y,z\in \mathbb{C}\right\}.$$
Thus, 
$\mathcal{I}(HgH)=\langle F_{34}-F_{23}-1\rangle$, and we see that $F_{14},F_{24}$ generate a Weyl subalgebra in ${}_J\mathcal{O}(Z_g)_{J}$. (In the notation of \ref{II.2}, we have $W=F_{14}$, $V=F_{24}$, $X=F_{23}$, and $Y=F_{34}$.)

Finally, let $A,B,C,T,Q$ be the images of $F_{12},F_{13},F_{34},F_{24},F_{14}$ in ${}_J\mathcal{O}(Z_g)_{J}$, respectively. Then ${}_J\mathcal{O}(Z_g)_{J}$ is generated as an algebra by 
$A,B,C,T,Q$, such that 
$[A,Q]=-C$, $[B,Q]=T-A-C(C-1)$, $[T,Q]=1$, 
and other pairs of generators commute. Thus, replacing $A$ with $-A$ and setting $P:=T-C(C-1)$, we see that  
we have an algebra isomorphism
$${}_J\mathcal{O}(Z_g)_{J}\cong\mathbb{C}[A,B,C,P][Q;\delta],\,\, 
\delta:=C\frac{\partial}{\partial A} + (P+A)\frac{\partial}{\partial B} + \frac{\partial}{\partial P}.$$ Set $A':=A-CP$ and $B':=B-(P+A)P+(C+1)P^2/2$, and let $\mathscr{W}$ denote the Weyl subalgebra generated by $P,Q$. Then we have an algebra isomorphism 
$${}_J\mathcal{O}(Z_g)_{J}\cong\mathbb{C}[A',B',C]\ot \mathscr{W}\cong U^{\omega_g}(\mathfrak{h}_g)\ot \mathscr{W}.$$ 
(See Question \ref{problem2}.)
\end{example}

\section{The Hopf algebra ${}_J\mathcal{O}(G)_{J}$ for connected nilpotent $G$}\label{nilphopf}

In this section, we let $G=T\times U$ be a connected nilpotent algebraic group over $\mathbb{C}$, where $T$ is the maximal torus of $G$ and $U$ is the unipotent radical of $G$. Let $A:=\mathcal{O}(G)$. Let $\mathcal{O}(U)=\mathbb{C}[y_1,\dots,y_m]$ be as in Section \ref{algstrhopf}, and let $A_0:=\mathcal{O}(T)=\mathbb{C}[X(T)]=\mathbb{C}[x_1^{\pm 1},\dots,x_k^{\pm 1}]$. Recall that $A=\mathcal{O}(T)\ot \mathcal{O}(U)$ as Hopf algebras. Finally, set $A_i:=A_0[y_1,\dots,y_i]$, $1\le i\le m$ (so, $A_m=A$).

\begin{lemma}\label{lemmanilpot}  
Let $J$ be a Hopf $2$-cocycle for $A$, and 
let $\cdot$ denote the multiplication in ${}_J A_{J}$. The following hold:
\begin{enumerate}
\item
We have a Hopf filtration on ${}_J A_{J}$:
\begin{equation*}\label{filt2}
A_0\subset {}_J (A_1)_{J}\subset\cdots \subset {}_J (A_i)_{J}\subset\cdots\subset {}_J (A_m)_{J}={}_J A_{J}.
\end{equation*}
\item
For every $i$, the Hopf algebra ${}_J (A_i)_{J}$ is generated by $y_i$ over ${}_J (A_{i-1})_{J}$.
\item
For every $j<i$, we have
\begin{eqnarray*}
\lefteqn{[y_i,y_j]:=y_i\cdot y_j - y_j\cdot y_i}\\ 
& = & \sum Y_i'Q(Y_i'',y_j) + \sum Y_j'Q(y_i,Y_j'')+\sum Y_i'Y_j'Q(Y_i'',Y_j'')\\ 
& + & \sum \left(J^{-1}(Y_i',Y_j')J(Y_{i2}'',Y_{j2}'')-J_{21}^{-1}(Y_i',Y_j')J_{21}(Y_{i2}'',Y_{j2}'')\right)Y_{i1}''Y_{j1}''.
\end{eqnarray*}
Hence, $[y_i,y_j]$ belongs to $A_{i-1}^+$. 
\item
$y_1,y_2$ are central primitives in ${}_J A_{J}$.
\item
For every $i,j$, we have 
\begin{eqnarray*}
\lefteqn{[y_i,x_j]=x_j\sum Q(Y_i'',x_j)Y_i'}\\
& + & x_j\sum \left(J^{-1}(Y_i',x_j)J(Y_{i2}'',x_j)-
J_{21}^{-1}(Y_i',x_j)J_{21}(Y_{i2}'',x_j)\right)Y_{i1}''.
\end{eqnarray*}
\item
The linear map
$\delta_i:{}_J (A_{i-1})_{J} \to {}_J (A_{i-1})_{J},\,\,s\mapsto [y_i,s]$, is an algebra derivation of ${}_J (A_{i-1})_{J}$ for every $i$.
\item
For every $i$, ${}_J (A_i)_{J}\cong {}_J (A_{i-1})_{J}[y_i;\delta_i]$ as Hopf algebras.
\end{enumerate}
\end{lemma}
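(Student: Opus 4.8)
The plan is to rerun the proof of Lemma \ref{lemmaunip} with the group algebra $A_0=\mathcal{O}(T)$ in the role of the base field $\mathbb{C}$; the one essentially new ingredient is part (5), recording how the $y_i$ commute with the torus characters $x_j$.

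For (1)--(2): since $A=\mathcal{O}(T)\ot\mathcal{O}(U)$ as Hopf algebras and $\mathcal{O}(U)$ carries the filtration $\mathbb{C}\subset\mathbb{C}[y_1]\subset\cdots\subset\mathbb{C}[y_1,\dots,y_m]=\mathcal{O}(U)$ by Hopf subalgebras from Section \ref{algstrhopf}, with $\Delta(y_i)=y_i\ot1+1\ot y_i+q(y_i)$ and $q(y_i)$ a coalgebra $2$-cocycle in $(\mathbb{C}[y_1,\dots,y_{i-1}])^+\ot(\mathbb{C}[y_1,\dots,y_{i-1}])^+$, each $A_i=\mathcal{O}(T)\ot\mathbb{C}[y_1,\dots,y_i]$ is a Hopf subalgebra of $A$ with $q(y_i)\in A_{i-1}^+\ot A_{i-1}^+$. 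By (\ref{nm}) a Hopf subalgebra of $A$ is a subalgebra of ${}_JA_J$, and the $A_i$ stay nested (this is (1)); and $A_i=A_{i-1}[y_i]$ as algebras while twisting leaves the underlying vector space unchanged, so ${}_J(A_i)_J$ is generated by $y_i$ over ${}_J(A_{i-1})_J$ (this is (2)).

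For (3), (4), (6), (7): the restriction of $J$ to $\mathcal{O}(U)$ is a Hopf $2$-cocycle for the unipotent group $U$, and since $(\id\ot\Delta)\Delta(y_i)\in\mathcal{O}(U)^{\ot 3}$, the product $y_i\cdot y_j$ in ${}_JA_J$ only involves $J$ on $\mathcal{O}(U)^{\ot 2}$ and thus agrees with the corresponding product for $U$. Hence the displayed formula in (3) and the inclusion $[y_i,y_j]\in A_{i-1}^+$ are Lemma \ref{lemmaunip}(5), the primitivity of $y_1,y_2$ and their commutativity with the remaining $y_j$ in (4) are Lemma \ref{lemmaunip}(6), and to the latter one adds $[y_1,x_j]=[y_2,x_j]=0$, immediate from (5) since $q(y_1)=q(y_2)=0$. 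Part (6) then holds because $\delta_i=[y_i,-]$ automatically obeys the Leibniz rule, while (3) and (5) show that $\delta_i$ sends the generators $y_1,\dots,y_{i-1},x_1^{\pm1},\dots,x_k^{\pm1}$ of ${}_J(A_{i-1})_J$ back into ${}_J(A_{i-1})_J$ (each term on the right of (5) lies in $A_{i-1}$, and $\delta_i(x_j^{-1})=-x_j^{-1}\delta_i(x_j)x_j^{-1}$ by (1)). Finally (7) follows from (2), (6) and the Hopf-algebra Ore-extension description of Lemma \ref{orehopf2} --- whose proof uses only that $q(y_i)$ is a coalgebra $2$-cocycle (Lemma \ref{orehopf1}): ${}_J(A_i)_J$ is free over ${}_J(A_{i-1})_J$ on the powers of $y_i$, the defining relation is $y_i\cdot s-s\cdot y_i=\delta_i(s)$, and the coproduct, untouched by twisting, is the Ore-extension one with $\Delta(y_i)=y_i\ot1+1\ot y_i+q(y_i)$.

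The main point is (5). I would compute $[y_i,x_j]=y_i\cdot x_j-x_j\cdot y_i$ directly from (\ref{nm}), just as for the formulas in (3)--(4) of Lemma \ref{lemmaunip}; the new feature is that $x_j$ is grouplike, so it occupies all three tensor slots in the iterated coproduct, while $(\id\ot\Delta)\Delta(y_i)=y_i\ot1\ot1+1\ot y_i\ot1+1\ot1\ot y_i+\sum 1\ot Y_i'\ot Y_i''+\sum Y_i'\ot Y_{i1}''\ot Y_{i2}''$. Expanding the two products, pulling $x_j$ to the front by commutativity of $A$, using $J(1,x_j)=J^{-1}(1,x_j)=\varepsilon(x_j)=1$ and $J_{21}(a,b)=J(b,a)$, and subtracting, one is left with the two displayed sums once the remaining scalar-valued contributions have been disposed of. As in Lemma \ref{lemmaunip}, where this role is played by the identity $J(y_i,y_j)+J^{-1}(y_i,y_j)=0$, these scalar terms are controlled by evaluating $J*J^{-1}=J^{-1}*J=\varepsilon\ot\varepsilon$ on $y_i\ot x_j$ and on $x_j\ot y_i$ --- the device from the proof of Lemma \ref{prim1} --- which is exactly what makes the ``$q(y_i)$-free'' part of the expansion collapse. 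The careful bookkeeping of these cancellations is the one real obstacle; the rest is routine.
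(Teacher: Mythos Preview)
Your proposal is correct and follows essentially the same route as the paper, which likewise dispatches (1)--(4) by reference to Lemma~\ref{lemmaunip}, derives (6) from (3) and (5), and (7) from (2) and (6), isolating (5) as the only genuinely new computation. One small point: for the scalar cancellation in (5) the paper uses the counit trick $\varepsilon([y_i,x_j])=0$ (the same device as in Lemma~\ref{lemmaunip}(3)--(4)) rather than the convolution identity $J*J^{-1}=\varepsilon\ot\varepsilon$ you invoke; since $y_i$ is not primitive, evaluating $J*J^{-1}$ on $y_i\ot x_j$ picks up an extra $\sum J(Y_i',x_j)J^{-1}(Y_i'',x_j)$ term, so your route to the vanishing of the ``$q(y_i)$-free'' part is a bit more circuitous than simply applying $\varepsilon$ to the preliminary expansion of $[y_i,x_j]$.
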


\begin{proof}
(1)--(4) are similar to Lemma \ref{lemmaunip}. As for (5), we have 
\begin{eqnarray*}
\lefteqn{[y_i,x_j]=x_j\left(J^{-1}(y_j,x_j)-J_{21}^{-1}(y_j,x_j)+Q(y_j,x_j)\right)}\\ 
& + & x_j\left(\sum Q(Y_i'',x_j)Y_i'+\sum J^{-1}(Y_i',x_j)Y_{i1}''J(Y_{i2}'',x_j)\right)\\ 
& - & x_j\left(\sum J_{21}^{-1}(Y_i',x_j)Y_{i1}''J_{21},(Y_{i2}'',x_j)\right),
\end{eqnarray*}
and since $\epsilon([y_i,x_j])=0$, $J^{-1}(y_j,x_j)-J_{21}^{-1}(y_j,x_j)+Q(y_j,x_j)=0$. 

Finally, (6) follows from (3) and (5), and (7) from (2) and (6).
\end{proof}

For every $i,j$, define $p_{ij}\in \mathcal{O}(U)^+$ as follows:
\begin{eqnarray*}
\lefteqn{p_{ij}:=\sum Q(Y_i'',x_j)Y_i'}\\
& + & \sum\left(J^{-1}(Y_i',x_j)J(Y_{i2}'',x_j)-
J_{21}^{-1}(Y_i',x_j)J_{21}(Y_{i2}'',x_j)\right)Y_{i1}''.
\end{eqnarray*}

\begin{theorem}\label{noethdomnilp}
The following hold:
\begin{enumerate}
\item
$\mathcal{O}(T)$ and ${}_J \mathcal{O}(U)_{J}$ are Hopf subalgebras of ${}_J\mathcal{O}(G)_{J}$.
\item
The group $X(T)$ acts on ${}_J \mathcal{O}(U)_{J}$ by automorphisms via $$x_j^{-1}y_ix_j=y_i+p_{ij}.$$
\item
${}_J\mathcal{O}(G)_{J}\cong {}_J \mathcal{O}(U)_{J} \rtimes \mathbb{C}[X(T)]$ is a smash product algebra.
\item
We have ${\rm Rep}({}_J\mathcal{O}(G)_{J})={\rm Rep}({}_J \mathcal{O}(U)_{J})^{X(T)}$.
\item
The Hopf algebra ${}_J\mathcal{O}(G)_{J}$ is an affine Noetherian domain with Gelfand-Kirillov dimension $\dim(G)$.
\end{enumerate}
\end{theorem}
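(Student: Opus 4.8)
The plan is to derive all five statements from Lemma~\ref{lemmanilpot}, using as the one extra input the fact that $\mathcal{O}(T)=\mathbb{C}[X(T)]$ is spanned by grouplike elements. The basic observation is that twisting by any Hopf $2$-cocycle leaves the multiplication of grouplikes unchanged: if $g,h$ are grouplike then $\Delta^2(g)=g\ot g\ot g$, so ${}_Jm_J(g\ot h)=J^{-1}(g,h)\,gh\,J(g,h)=gh$, since $J(g,h)J^{-1}(g,h)=\varepsilon(g)\varepsilon(h)=1$. Applying this to the characters $x^a$ ($a\in X(T)$) shows that $\mathcal{O}(T)$, viewed inside ${}_JA_J$, is a subalgebra on which $\cdot$ agrees with the original product, and it is a subcoalgebra since the coalgebra of ${}_JA_J$ is that of $A$; this proves the first half of (1). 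The second half holds because $\mathcal{O}(U)\subset A=\mathcal{O}(T)\ot\mathcal{O}(U)$ is a Hopf subalgebra (it corresponds to the quotient $G=T\times U\twoheadrightarrow U$), and the formula (\ref{nm}) for $\cdot$ restricts to ${}_J\mathcal{O}(U)_J$; as in Section~\ref{algstrhopf}, ${}_J\mathcal{O}(U)_J$ is then generated as an algebra by $y_1,\dots,y_m$.

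For (2), each $x_j$ is a grouplike, hence a unit of ${}_JA_J$ with inverse $x_j^{-1}$, so $s\mapsto x_j^{-1}\cdot s\cdot x_j$ is an algebra automorphism of ${}_JA_J$. By Lemma~\ref{lemmanilpot}(5) one has $x_j^{-1}\cdot y_i\cdot x_j=y_i+p_{ij}$ with $p_{ij}\in\mathcal{O}(U)^+$, so this automorphism carries each algebra generator $y_i$ of ${}_J\mathcal{O}(U)_J$ back into ${}_J\mathcal{O}(U)_J$, hence preserves ${}_J\mathcal{O}(U)_J$ and restricts to an automorphism of it. Since the $x_j$ are commuting units generating the group $X(T)$, the assignment sending $x\in X(T)$ to the automorphism $s\mapsto x^{-1}\cdot s\cdot x$ of ${}_J\mathcal{O}(U)_J$ is a group homomorphism $X(T)\to\Aut({}_J\mathcal{O}(U)_J)$, which is exactly the action in (2).

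For (3), iterate Lemma~\ref{lemmanilpot}(7): each ${}_J(A_i)_J={}_J(A_{i-1})_J[y_i;\delta_i]$ is a free left ${}_J(A_{i-1})_J$-module on the powers of $y_i$, so by induction ${}_JA_J$ is a free left $\mathcal{O}(T)$-module on the ordered monomials in $y_1,\dots,y_m$, and those same monomials form a basis of ${}_J\mathcal{O}(U)_J$. Hence the multiplication map $\mathcal{O}(T)\ot{}_J\mathcal{O}(U)_J\to{}_JA_J$ is a linear isomorphism, and since the $x_j$ are grouplike units acting on ${}_J\mathcal{O}(U)_J$ by the conjugation of (2), this isomorphism identifies ${}_JA_J$ with the smash product ${}_J\mathcal{O}(U)_J\rtimes\mathbb{C}[X(T)]$, proving (3). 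Statement (4) is then the standard equivalence between modules over a smash product $B\rtimes\mathbb{C}[\Gamma]$ and $\Gamma$-equivariant $B$-modules, applied with $B={}_J\mathcal{O}(U)_J$ and $\Gamma=X(T)$.

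Finally, (5) follows from Lemma~\ref{lemmanilpot}(7) exactly as Corollary~\ref{noethdomunipgr} does: $\mathcal{O}(T)=\mathbb{C}[x_1^{\pm1},\dots,x_k^{\pm1}]$ is an affine Noetherian domain of Gelfand-Kirillov dimension $\dim T$, and forming $m=\dim U$ successive Ore extensions by derivations preserves affineness, the domain property and Noetherianity, while raising the Gelfand-Kirillov dimension by $1$ each time (by \cite[Proposition 8.2.11]{MR}); hence ${}_JA_J$ is an affine Noetherian domain of Gelfand-Kirillov dimension $\dim T+\dim U=\dim G$. The step needing the most care is (3): one has to check that the iterated Ore-extension (PBW) basis is compatible with the tensor decomposition $A=\mathcal{O}(T)\ot\mathcal{O}(U)$ and that the transported multiplication is precisely the smash-product multiplication coming from the action of (2); once this is in place, (4) and (5) are formal, and (1)--(2) are immediate from the grouplike observation together with Lemma~\ref{lemmanilpot}.
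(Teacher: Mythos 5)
Your proposal is correct and follows essentially the same route as the paper: everything is extracted from Lemma~\ref{lemmanilpot} (the grouplike/conjugation relations giving (1)--(3), the standard smash-product module equivalence for (4), and the iterated Ore-extension structure with \cite[Proposition 8.2.11]{MR} for (5)), which is exactly what the paper's one-line proof invokes together with Corollary~\ref{noethdomunipgr}. The only cosmetic difference is that you obtain (5) by iterating the Ore extensions of Lemma~\ref{lemmanilpot}(7) directly over $\mathcal{O}(T)$, whereas the paper passes through the smash-product decomposition (3) and Corollary~\ref{noethdomunipgr}; both amount to the same computation.
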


\begin{proof}
(1) and (2) follow from Lemma \ref{lemmanilpot}, (3) follows from (2), and 
(4)--(5) follow from (3) and Corollary \ref{noethdomunipgr}.
\end{proof}

\begin{remark}
Theorems \ref{repthm}, \ref{noethdomnilp}(4) imply a classification of finite dimensional irreducible representations of ${}_J\mathcal{O}(G)_{J}$.
\end{remark}

\begin{example}
Let $U$ be the Heisenberg group as in Example \ref{heisen} (except, there it is denoted by $G$). 
Let $G:=\mathbb{G}_m\times U$. Then $G$ is a connected (non-unipotent) nilpotent algebraic group over $\mathbb{C}$, and we have $\mathcal{O}(G)=\mathbb{C}[F^{\pm 1},X,Y,V]$, where $F$ is a grouplike element and $X,Y,V$ are as in Example \ref{heisen}. The Lie algebra $\g$ of $G$ has basis $f,a,b,c$, where $f:=F\frac{\partial}{\partial F}$, and $a,b,c$ are as in Example \ref{heisen}. By \cite[Theorem 5.3 \& Proposition 5.4]{G}, the classical $r$-matrix $r:=f\wedge (a+b)$ for $\g$ corresponds to a Hopf $2$-cocycle $J$ for $G$. It is straightforward to verify that ${}_J\mathcal{O}(G)_{J}$ is generated as an algebra by $F,X,Y,V$, such that $[V,F]=F(Y-X)$, or equivalently, $F^{-1}VF=V+Y-X$, and other pairs of generators commute. Thus, we have ${}_J\mathcal{O}(G)_{J}\cong \mathbb{C}[X,Y,V]\rtimes \mathbb{C}[F^{\pm }]$ as algebras, with nontrivial action.
\end{example}

\end{document}